\definecolor{darkgreen}{rgb}{0,0.5,0}
\definecolor{darkred}{rgb}{0.7,0,0}
\theoremstyle{plain}
\newtheorem{lemma}{Lemma}[section]
\newtheorem{thm}[lemma]{Theorem}
\newtheorem{prop}[lemma]{Proposition}
\theoremstyle{definition}
\newtheorem{defn}[lemma]{Definition}
\newtheorem*{claim}{Claim}
\newtheorem{rmk}[lemma]{Remark}
\numberwithin{equation}{section}
\newcommand{\al}{\alpha}
\newcommand{\ga}{\gamma}
\newcommand{\Ga}{\Gamma}
\newcommand{\de}{\delta}
\newcommand{\De}{\Delta}
\newcommand{\om}{\omega}
\newcommand{\Om}{\Omega}
\newcommand{\ka}{\kappa}
\newcommand{\la}{\lambda}
\renewcommand{\th}{\theta}
\newcommand{\R}{\ensuremath{{\mathbb R}}}
\renewcommand{\H}{\ensuremath{{\mathbb H}}}
\DeclareMathOperator*{\essinf}{ess\,inf}
\newcommand{\weakto}{\rightharpoonup}
\let\div\relax
\DeclareMathOperator{\div}{div}
\DeclareMathOperator{\curl}{curl}
\newcommand{\partref}[1]{\hbox{(\csname @roman\endcsname{\ref{#1}})}}
\newcommand{\beq}{\begin{equation}}
\newcommand{\eeq}{\end{equation}}
\newcommand{\beqs}{\begin{equation*}}
\newcommand{\eeqs}{\end{equation*}}
\newcommand{\beqa}{\begin{equation}\begin{aligned}}
\newcommand{\eeqa}{\end{aligned}\end{equation}}
\newcommand{\beqas}{\begin{equation*}\begin{aligned}}
\newcommand{\eeqas}{\end{aligned}\end{equation*}}
\newcommand{\half}{\frac{1}{2}}
\newcommand{\eps}{\varepsilon}
\newcommand{\supp}{\text{supp}} 
\newcommand{\sgn}{\text{\textnormal{sgn}}}
\title[Vanishing viscosity limit of the compressible Navier-Stokes equations]{{\sc
Vanishing viscosity limit of the compressible\\ Navier-Stokes equations with general pressure law
}
\\ 
}
\author[Matthew R. I. Schrecker]{Matthew R. I. Schrecker\textsuperscript{1}}\thanks{{}\textsuperscript{1}Department of Mathematics, University of Wisconsin Madison, Van Vleck Hall, 480
Lincoln Drive, Madison, Wisconsin 53706, USA}  \author[Simon Schulz]{Simon Schulz\textsuperscript{2}}\thanks{{}\textsuperscript{2}Mathematical Institute, University of Oxford, Oxford, OX2 6GG, UK}
\begin{document}
\begin{abstract}
We prove the convergence of the vanishing viscosity limit of the one-dimensional, isentropic, compressible Navier-Stokes equations to the isentropic Euler equations in the case of a general pressure law. Our strategy relies on the construction of fundamental solutions to the entropy equation that remain controlled for unbounded densities, and employs an improved reduction framework to show that measure-valued solutions constrained by the Tartar commutation relation (but with possibly unbounded support) reduce to a Dirac mass. As the Navier-Stokes equations do not admit an invariant region, we work in the finite-energy setting, where a detailed understanding of the high density regime is crucial.
\end{abstract}

\maketitle
\section{Introduction}
The one-dimensional, isentropic, compressible Navier-Stokes equations model the flow of a viscous gas in a single spatial dimension or under the assumption of planar symmetry. The equations can be thought of as the viscous counterpart of the isentropic Euler equations, commonly used to model the flow of inviscid gases. The isentropic, compressible Navier-Stokes equations in one space dimension may be written as follows:
\beq\label{eq:NS}
\begin{cases}
\rho_t+(\rho u)_x=0, \\
(\rho u)_t+(\rho u^2+p(\rho))_x=\eps u_{xx},
\end{cases}
\eeq
 where $\rho\geq 0$ is the density of the fluid, $u$ is its velocity and $p$ is the pressure, determined by the equation of state of a barotropic gas, i.e.~$p=p(\rho)$. The positive constant $\eps>0$ is the viscosity of the fluid. Throughout, we consider $(t,x)\in\R^2_+=(0,\infty)\times\R$. We consider the Cauchy problem for this system by complementing the equations with initial data
\beq\label{eq:Cauchydata}
(\rho,u)|_{t=0}=(\rho_0,u_0).
\eeq
Formally, in the limit $\eps\to0$, the Navier-Stokes equations \eqref{eq:NS} converge to the isentropic Euler equations, given by
\beq\label{eq:Eulereqs}
\begin{cases}
\rho_t+m_x=0, \\
m_t+(\frac{m^2}{\rho}+p(\rho))_x=0, 
\end{cases}
\eeq
where the momentum $m=\rho u$. The Euler equations form an archetypal system of hyperbolic conservation laws exhibiting breakdown of classical solutions and non-uniqueness of generalised solutions. The vanishing viscosity limit from the Navier-Stokes equations is commonly used as an admissibility criterion to identify a physical weak solution to the Euler equations. System \eqref{eq:Eulereqs} is strictly hyperbolic provided that the pressure satisfies 
\beq\label{ass:stricthyperbolicity}
p'(\rho)>0,
\eeq and the characteristic fields are genuinely non-linear under the assumption
\beq\label{ass:genuinenon-linearity}
\rho p''(\rho)+2p'(\rho)>0.
\eeq
A typical pressure law (equation of state) for a barotropic fluid is that of a gamma-law gas, 
$$p(\rho)=\ka\rho^\ga,$$
for constants $\ga\geq 1$ and $\ka>0$, with $\ga\in(1,3)$ as the physical range. When $\ga=1$, the fluid is said to be isothermal, while for $\ga>1$, the fluid is called polytropic. In the polytropic case, we see that the conditions of strict hyperbolicty and genuine non-linearity, \eqref{ass:stricthyperbolicity}--\eqref{ass:genuinenon-linearity}, fail at the vacuum, $\rho=0$. By scaling the equations, the constant $\ka>0$ may be freely chosen. For certain gases, at high densities, one expects that the pressure will grow linearly with the density, as predicted by Thorne in \cite{Thorne}, while behaving like a polytropic gas near the vacuum. In this paper, we are concerned with such gases, with precise assumptions made in \eqref{ass:pressure1}--\eqref{ass:pressure3} below.

The convergence of the vanishing physical viscosity limit from the Navier-Stokes equations as $\eps\to0$ has proved to be a difficult problem. 
Under the assumption of a gamma-law for $\ga\in(1,\infty)$, Chen and Perepelitsa showed in \cite{ChenPerep1} that for any initial data of finite energy, a solution of the Euler equations \eqref{eq:Eulereqs} could be constructed as a vanishing viscosity limit of the Navier-Stokes equations \eqref{eq:NS}. This convergence followed from a compensated compactness framework, constructed in \cite{ChenPerep1}, for approximate solutions to the Euler equations for a gamma-law gas satisfying certain local  integrability estimates. Such a framework is of independent interest, having subsequently been applied by the same authors to the  spherically symmetric Euler equations in \cite{ChenPerep2}, and by Chen and the first author to the transonic nozzle problem in \cite{ChenSchrecker}.

Previously, $L^\infty$ entropy solutions to the isentropic Euler equations were constructed via vanishing artificial/numerical viscosity limits and from finite difference schemes by DiPerna \cite{DiPerna2}, Chen \cite{Chen1}, Ding, Chen and Luo \cite{DingChenLuo}, Lions, Perthame and Tadmor \cite{LionsPerthameTadmor}, and Lions, Perthame and Souganidis \cite{LionsPerthameSouganidis} for polytropic gases, by Chen and LeFloch \cite{ChenLeFloch, ChenLeFloch2} for general pressure laws, and by Huang and Wang \cite{HZ} for the isothermal equations. 

The purpose of this paper is to prove the convergence of the vanishing viscosity limit under the following assumptions on the pressure. We assume that there exists a constant $\rho_*>0$ such that:\vspace{-2mm}
\begin{enumerate}
\item[(i)] For $0\leq\rho\leq\rho_*$, the pressure is an approximate $\ga$-law for some $\ga\in(1,3)$, i.e.
\beq\label{ass:pressure1}
p(\rho)=\ka\rho^\ga(1+P(\rho)),
\eeq
where the derivatives $|P^{(n)}(\rho)|\leq M\rho^{2\th-n}$ for $n=0,1,2,3$ and $0\leq\rho\leq\rho_*$, where $\th=\frac{\ga-1}{2}$. The constant $M>0$ may depend on $\rho_*$;
\item[(ii)] For $\rho\geq\rho_*$, we have that, for some constant $c_*>0$, 
\beq\label{ass:pressure2}
p(\rho)=c_*\rho;\eeq
\item[(iii)] For all $\rho>0$, the conditions of strict hyperbolicity and genuine non-linearity hold:
\beq\label{ass:pressure3}
p'(\rho)>0,\qquad \rho p''(\rho)+2p'(\rho)>0.\eeq
\end{enumerate}\vspace{-2mm}
Without loss of generality, we assume that $c_*=1$ and that $\rho_*=1$ also. 

Unlike in the case of artificial viscosity approximations, the Navier-Stokes equations do not admit a natural invariant region. For each fixed $\eps>0$, the solutions of the Navier-Stokes equations are bounded in the space $L^\infty$ (\textit{cf.}~\cite{Hoff}), but these bounds are not uniform with respect to the viscosity. We therefore work with the finite-energy method, which LeFloch and Westdickenberg introduced in \cite{LeFlochWestdickenberg} to prove existence of finite-energy solutions of the isentropic Euler equations for gamma-law gases with $\ga\in(1,5/3)$ and which was generalised to $\ga\in(1,\infty)$ in \cite{ChenPerep1}.

We recall that an entropy/entropy-flux pair (or entropy pair, for simplicity) is a pair
of functions $(\eta,q):\R^2_+\rightarrow\mathbb{R}^2$ such that
\begin{equation*}
 \nabla q(\rho,m)=\nabla\eta(\rho,m)\nabla\begin{pmatrix}
                                 m\\
                                 \frac{m^2}{\rho}+p(\rho)
                                \end{pmatrix},
\end{equation*}
where $\nabla$ is the gradient with respect to the conservative variables $(\rho, m)$. 
The mechanical energy and mechanical energy flux, $(\eta^*,q^*)$, form an explicit entropy pair, given by
\begin{eqnarray*}
&&\eta^*(\rho,m)=\frac{1}{2}\frac{m^2}{\rho}+\rho e(\rho), \qquad  q^*(\rho,m)=\frac{1}{2}\frac{m^3}{\rho^2}+me(\rho)+\rho me'(\rho), 
\end{eqnarray*}
where the internal energy $e(\rho)$ is related to the pressure via the relation $$p(\rho)=\rho^2e'(\rho).$$ 
As already stated, central to our approach is the concept of finite-energy solutions, which we now define. We allow for the solutions to admit non-trivial end states $(\rho_\pm,u_\pm)$ such that $\lim_{x\to\pm\infty}(\rho,u)=(\rho_\pm,u_\pm)$. We choose smooth, monotone functions  $(\bar{\rho}(x),\bar{u}(x))$
such that, for some $L_0>1$,
\begin{equation}\label{eq:referencefunctions}
(\bar{\rho}(x),\bar{u}(x))=
\begin{cases}
(\rho_+,u_+), \quad & x\geq L_0,\\[1mm]
(\rho_-,u_-), \quad & x\leq -L_0.
 \end{cases}
\end{equation}
We emphasise that these reference functions are fixed at the very start of our approach, and do not change later in the paper. The relative mechanical energy with respect to $(\bar{\rho}(x),\bar{m}(x))=(\bar{\rho}(x), \bar{\rho}(x)\bar{u}(x))$ is then
\beqas
\overline{\eta^*}(\rho,m):=&\,\eta^*(\rho,m)-\eta^*(\bar{\rho},\bar{m})-\nabla\eta^*(\bar{\rho},\bar{m})\cdot(\rho-\bar{\rho},m-\bar{m})\\
  =&\,\frac{1}{2}\rho|u-\bar{u}|^2+e^*(\rho,\bar\rho)\geq0,
\eeqas
where $e^*(\rho,\bar\rho)=\rho e(\rho)-\bar\rho e(\bar\rho)-(\bar\rho e'(\bar\rho)+e(\bar\rho))(\rho-\bar\rho)\geq 0$.

The total relative mechanical energy, relative to the end-states $(\rho_\pm,u_\pm)$, is then defined as
\beq\label{eq:relativeenergy}
E[\rho,u](t):=\int_\R\overline{\eta^*}(\rho,\rho u)(t,x)\,dx.
\eeq
A pair $(\rho,m)$ with $m=\rho u$ is said to be of relative finite-energy if $E[\rho,u]<\infty$.

 From the definition of entropy pair, we see that any entropy function satisfies the \it entropy equation\rm,
\beq\label{eq:entropyequation}
\eta_{\rho\rho}-\frac{p'(\rho)}{\rho^2}\eta_{uu}=0.
\eeq As is well known (see for instance \cite{ChenLeFloch,LionsPerthameTadmor}), any regular weak entropy (an entropy $\eta$ vanishing at $\rho=0$) may be generated by the convolution of a test function $\psi(s)\in C^2(\R)$ with a fundamental solution $\chi(\rho,u,s)$ of the entropy equation, that is,
$$\eta^\psi(\rho,u)=\int_\R\psi(s)\chi(\rho,u,s)\,ds,$$
with a corresponding entropy flux generated from an entropy flux kernel $\sigma(\rho,u,s)$,
$$q^\psi(\rho,u)=\int_\R\psi(s)\sigma(\rho,u,s)\,ds.$$
\begin{defn}\label{def:finite-energy-entropy-sol}
Given initial data $(\rho_0,u_0)\in L^1_{loc}(\R^2_+)$ of relative finite-energy, $E[\rho_0,u_0]\leq E_0<\infty$, we say that a pair of functions $(\rho,u)\in L^1_{loc}(\R^2_+)$ with $\rho\geq 0$ is a relative finite-energy entropy solution of the Euler equations \eqref{eq:Eulereqs} if:\vspace{-2mm}
\begin{itemize}
\item[(i)] There exists a constant $M(E_0,t)$, monotonically increasing and continuous with respect to $t$, such that
$$E[\rho,u](t)\leq M(E_0,t) \text{ for almost every $t\geq0$;}$$
\item[(ii)] For any $\phi\in C_c^\infty(\overline{\mathbb{R}^2_+})$,
\beqa\label{eq:Eulerweakform}
&\int_{\mathbb{R}^2_+}\big(\rho\phi_t+\rho u\phi_x\big)\,dx\,dt+\int_\mathbb{R}\rho_0(x)\phi(0,x)\,dx=0,\\
& \int_{\mathbb{R}^2_+}\big(\rho u\phi_t+\big(\rho u^2+p(\rho)\big)\phi_x\big)\,dx\,dt+\int_\mathbb{R}\rho_0(x)u_0(x)\phi(0,x)\,dx=0;
\eeqa
\item[(iii)] There exists a bounded Radon measure $\mu(t,x,s)$ on $\mathbb{R}^2_+ \times \mathbb{R}$ such that
\begin{equation*}
\mu (U \times \mathbb{R}) \geq 0 \qquad \text{for any open set } U \subset \mathbb{R}^2_+,
\end{equation*}
and the corresponding entropy kernel and its flux satisfy
\begin{equation}\label{eq:kinetic}
\partial_t \chi(\rho(t,x),u(t,x),s) + \partial_x \sigma (\rho(t,x),u(t,x),s) = \partial_s^2 \mu(t,x,s) ,
\end{equation}
in the sense of distributions on $\mathbb{R}^2_+\times\mathbb{R}$.
\end{itemize}
\end{defn}
The main theorem of this paper, working in the framework of relative finite-energy solutions to the Euler equations, is the following.
\begin{thm}\label{thm:Navier-Stokes-limit}
Suppose that the initial data $(\rho_0,u_0)\in L^1_{loc}(\R^2_+)$ with $\rho_0\geq 0$ and end-states $(\rho_\pm,u_\pm)$ is of relative finite-energy,
$$E[\rho_0,u_0]=\int_\R\overline{\eta^*}(\rho_0,\rho_0 u_0)\,dx\leq E_0<\infty,$$
and suppose that the pressure function $p(\rho)$ satisfies  \eqref{ass:pressure1}--\eqref{ass:pressure3}. Then there exists a sequence of regularised initial data $(\rho_0^\eps,u_0^\eps)$ such that the unique, smooth solutions $(\rho^\eps, u^\eps)$ to \eqref{eq:NS} with this initial data converge as $\eps\to0$, $(\rho^\eps, \rho^\eps u^\eps)\to(\rho, \rho u)$, to a relative finite-energy entropy solution  of the Euler equations \eqref{eq:Eulereqs} with initial data $(\rho_0,\rho_0u_0)$ in the sense of Definition \ref{def:finite-energy-entropy-sol}. The convergence is almost everywhere and $L^p_{loc}(\R^2_+)\times L^q_{loc}(\R^2_+)$ for $p\in[1,2)$ and $q\in[1,3/2)$.
\end{thm}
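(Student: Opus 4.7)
The plan is to construct the solution as a vanishing viscosity limit using the finite-energy compensated compactness framework pioneered in \cite{LeFlochWestdickenberg, ChenPerep1}, adapted to the non-standard pressure law \eqref{ass:pressure1}--\eqref{ass:pressure3}. First I would regularise the initial data, producing a sequence $(\rho_0^\eps, u_0^\eps)$ that is smooth, uniformly bounded away from the vacuum at each fixed $\eps$, compatible with the reference functions $(\bar\rho, \bar u)$ at infinity, and that satisfies a uniform relative finite-energy bound. Standard parabolic theory (cf.\ \cite{Hoff}) then produces a unique global smooth solution $(\rho^\eps, u^\eps)$ to the Cauchy problem for \eqref{eq:NS} with these data.

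Next I would establish a hierarchy of $\eps$-uniform a priori estimates. The basic relative-energy estimate, obtained by testing the momentum equation against $u-\bar u$, controls $E[\rho^\eps,u^\eps](t)$ together with the dissipation $\eps\int\!\!\int (u_x^\eps)^2\,dx\,dt$. Because $p$ is only linear at high densities, the relative pressure energy alone does not yield enough integrability of $\rho^\eps$; this is where assumption \eqref{ass:pressure2} is crucial, and higher-moment estimates (of BD-type, together with the auxiliary weak-entropy estimates of Chen--Perepelitsa) should give local integrability of $\rho^\eps$ and $\rho^\eps|u^\eps|^3$ sufficient to control weak entropy fluxes in $L^p_{loc}$ for some $p>1$.

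With these uniform bounds in hand, I would run a compensated-compactness argument. For any weak entropy pair $(\eta^\psi,q^\psi)$ generated by a compactly supported test function $\psi$, the distribution $\partial_t \eta^\psi(\rho^\eps,u^\eps) + \partial_x q^\psi(\rho^\eps,u^\eps)$ lies in a compact subset of $H^{-1}_{loc}$ (decomposing it via Murat's lemma into a $W^{-1,p}_{loc}$ piece controlled by the higher-moment bounds plus a bounded-measure piece). Passing to a subsequence, the Young measure $\nu_{(t,x)}$ generated by $(\rho^\eps,u^\eps)$ satisfies the Tartar commutation identity
\[
\langle \nu, \eta_1 q_2 - \eta_2 q_1 \rangle = \langle \nu, \eta_1 \rangle \langle \nu, q_2 \rangle - \langle \nu, \eta_2 \rangle \langle \nu, q_1 \rangle
\]
for all weak entropy pairs $(\eta_i,q_i)$. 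The main obstacle now appears: without a uniform $L^\infty$ bound on $(\rho^\eps,u^\eps)$, $\nu_{(t,x)}$ may have unbounded support in $\rho$, so the classical DiPerna / Chen--LeFloch reduction is not directly available.

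Overcoming this obstacle is the heart of the proof, and I would attack it in two linked steps. First, I would construct fundamental solutions $\chi(\rho,u,s)$ and $\sigma(\rho,u,s)$ of \eqref{eq:entropyequation} that are not only controlled near the vacuum (dictated by the $\gamma$-law expansion \eqref{ass:pressure1}, matching the Chen--LeFloch construction there) but also grow in a controlled way as $\rho\to\infty$, with the isothermal asymptotics from \cite{HZ} guiding the large-$\rho$ regime. Second, I would extend the reduction framework to admit entropies whose $s$-support is not compactly contained in the effective range of $(\rho,u)$: iterating the Tartar relation against carefully chosen families of such entropies, and using the finite-energy moment bounds that transfer to $\nu$ to dominate contributions from large $\rho$, should force $\nu_{(t,x)}=\delta_{(\rho(t,x),u(t,x))}$ at a.e.\ $(t,x)$. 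Once the Young measure collapses to a Dirac mass, a.e.\ convergence of $(\rho^\eps,u^\eps)$ follows, and the uniform moment bounds upgrade this to $L^p_{loc}\times L^q_{loc}$ convergence in the stated ranges. Finally, passing to the limit in the weak formulations \eqref{eq:Eulerweakform} and in the kinetic identity, with the viscous energy dissipation providing the non-negative measure $\mu$ in \eqref{eq:kinetic}, yields a relative finite-energy entropy solution in the sense of Definition \ref{def:finite-energy-entropy-sol}.
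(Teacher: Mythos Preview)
Your outline is close to the paper's strategy, but there is one concrete technical step that would fail as written: you claim that $\partial_t\eta^\psi+\partial_x q^\psi$ lies in a compact subset of $H^{-1}_{loc}$ via Murat's lemma. Under assumption \eqref{ass:pressure2} the pressure is linear for large $\rho$, so the higher-integrability estimate on the density (Lemma \ref{lemma:densityintegrability}) yields only $\rho p(\rho)\sim\rho^2\in L^1_{loc}$, i.e.\ $\rho\in L^2_{loc}$ and no better. Since $|q^\psi|\leq M\rho$ (Lemma \ref{lemma:compactentropybounds}), the entropy pairs are merely bounded in $L^2_{loc}$, not in any $L^r_{loc}$ with $r>2$; hence the sequence of dissipation measures is bounded in $W^{-1,2}_{loc}$ but not in $W^{-1,r}_{loc}$ for $r>2$, and Murat's interpolation lemma does not apply. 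The paper only obtains $W^{-1,q}_{loc}$ compactness for $q\in(1,2)$ (Proposition \ref{prop:W^-1,qcompactness}).

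The paper's fix is to bypass $H^{-1}$ compactness entirely and use the refined div-curl lemma of Conti--Dolzmann--M\"uller (Lemma \ref{lemma:CDMdiv-curl}), which requires only $W^{-1,1}_{loc}$ compactness of $\div$ and $\curl$ together with equi-integrability of the inner product $v^\eps\cdot w^\eps$. The equi-integrability is obtained from the sharpened bound $|\eta^\psi|\leq M\rho/\sqrt{\log(\rho+1)}$ for large $\rho$, derived from the new isothermal entropy kernel representation (Theorem \ref{thm:entropykernelexpansionshighdensity}); this gives $|\eta^{\psi_1}q^{\psi_2}|\leq M\rho^2/\sqrt{\log\rho}$, which is equi-integrable against the $L^1_{loc}$ bound on $\rho^2$. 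Without this logarithmic gain and the CDM lemma, the commutation relation cannot be established, and the rest of your argument does not get started. Note also that the relevant isothermal kernels are constructed from scratch in \S\ref{sec:entropy}; the entropies of \cite{HZ} are singular and are not generated by the regular fundamental solutions used here.
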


The existence of weak solutions (and smooth solutions) with positive density for the Navier-Stokes equations \eqref{eq:NS} was proved by Hoff \cite{Hoff}. An analysis of the inviscid limit of a sequence of solutions of system \eqref{eq:NS} to a simple shock wave solution of the Euler equations was undertaken by Hoff and Liu \cite{HoffLiu} and subsequently by G\`ues, M\'etivier, Williams and Zumbrun
 \cite{GMWZ}. In the situation that the Riemann initial data gives rise to a rarefaction wave solution to the Euler equations, Xin \cite{Xin} showed convergence of the Navier-Stokes equations to the solution of the Riemann problem with a rate of convergence away from an initial layer. Zhang, Pan and Tan \cite{ZPT} have analysed the limit in the case of a composite wave made up of two shocks, even with an initial layer, while Huang, Wang, Wang and Yang \cite{HWWY} have considered the case with interacting shocks. More recently, the vanishing viscosity and capillarity limits of the Navier-Stokes-Korteweg system have been studied by Germain and LeFloch \cite{GLeFloch} for certain $\ga$-law gases in the finite-energy framework.

To prove Theorem \ref{thm:Navier-Stokes-limit}, we employ the compensated compactness method. To this end, we prove energy estimates for the solutions to the Navier-Stokes equations, uniformly with respect to the viscosity. As the pressure laws we are considering have linear growth at high densities, we require a detailed analysis of the behaviour of the entropy functions for such pressures, based on new representation formulae for entropy pairs of the isothermal gas dynamics. A rough version of this representation, which will be made precise in \S\ref{sec:entropy}, Theorem \ref{thm:isothermalentropykernels}, is the following.
\begin{thm}[Isothermal Entropy Kernels]\label{thm:isothermalentropykernels-intro}
Any weak entropy function $\eta(\rho,m)$ of the isothermal Euler equations (i.e.~$p(\rho)=\rho$) may be generated by convolution with two fundamental solutions of the isothermal entropy equation:
$$\eta(\rho,\rho u)=\int_\R\Big(\eta_\rho(1,s)\chi^\sharp(\rho,u-s)+\eta(1,s)\chi^\flat(\rho,u-s)\Big)\,ds,
$$
where the integral kernels $\chi^\sharp(\rho,u)$ and $\chi^\flat(\rho,u)$ are the solutions of
\beq
\begin{Bmatrix}
&\chi^{\sharp}_{\rho\rho}-\frac{1}{\rho^2}\chi^{\sharp}_{uu} = 0,   &&\chi^{\flat}_{\rho\rho}-\frac{1}{\rho^2}\chi^{\flat}_{uu} = 0,\\
&\chi^{\sharp}\rvert_{\rho=1} = 0, &&\chi^{\flat}\rvert_{\rho=1} = \delta_{u=0},\\
&\chi^{\sharp}_{\rho}\rvert_{\rho = 1} = \delta_{u=0}, &&\chi^{\flat}_{\rho}\rvert_{\rho = 1} = 0.
\end{Bmatrix}
\eeq
Moreover,  $\chi^\sharp$ and $\chi^\flat$ admit explicit representations in terms of modified Bessel functions.
\end{thm}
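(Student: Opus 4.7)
The isothermal entropy equation $\eta_{\rho\rho}-\frac{1}{\rho^2}\eta_{uu}=0$ is a second-order linear PDE, strictly hyperbolic in $\rho$ with characteristic speeds $\pm 1/\rho$, whose coefficient depends only on $\rho$. My plan is to exploit this structure by treating the equation as a Cauchy problem on the non-characteristic hypersurface $\{\rho=1\}$ (rather than at the vacuum $\rho=0$, as in the polytropic case, where the singularity of the equation naturally halves the data and produces a single-kernel representation). For isothermal gases the equation is regular at $\rho=1$, so two pieces of Cauchy data are needed and hence two fundamental kernels appear.

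\textbf{Step 1: define $\chi^{\sharp}$ and $\chi^{\flat}$ via Cauchy problems.} Standard Hadamard theory for linear hyperbolic equations in two variables ensures that, for suitable data $(f,g)$, there is a unique solution to
\beqs
\eta_{\rho\rho}-\frac{1}{\rho^2}\eta_{uu}=0,\qquad \eta(1,u)=f(u),\qquad \eta_\rho(1,u)=g(u).
\eeqs
Take $\chi^{\flat}$ and $\chi^{\sharp}$ to be the distributional solutions corresponding to $(f,g)=(\delta_{u=0},0)$ and $(0,\delta_{u=0})$, respectively.

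\textbf{Step 2: convolution representation.} Because the coefficient of $\eta_{uu}$ depends only on $\rho$, the equation is invariant under translations $u\mapsto u-s$. Combining this with linearity and uniqueness of the Cauchy problem, the solution with data $(f,g)$ is
\beqs
\eta(\rho,u)=\int_\R f(s)\chi^{\flat}(\rho,u-s)\,ds+\int_\R g(s)\chi^{\sharp}(\rho,u-s)\,ds.
\eeqs
Applying this with $f(\cdot)=\eta(1,\cdot)$ and $g(\cdot)=\eta_\rho(1,\cdot)$ (which is legitimate because any weak entropy is smooth away from $\rho=0$ and in particular in a neighbourhood of $\rho=1$) yields the stated formula.

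\textbf{Step 3: explicit Bessel representation.} I would flatten the equation by setting $\tau=\log\rho$ and $\eta=\sqrt{\rho}\,v$; a routine calculation reduces the entropy equation to the Klein--Gordon-like equation
\beqs
v_{\tau\tau}-v_{uu}=\tfrac{1}{4}v.
\eeqs
This has an explicit Riemann function involving the modified Bessel function $I_0(\tfrac12\sqrt{\tau^2-u^2})$ on the forward cone $|u|\leq\tau$. Unwinding the substitution gives closed-form expressions for $\chi^{\sharp}$ and $\chi^{\flat}$, each supported in $\{|u|\leq\log\rho\}$, with a $\delta$-distribution at the characteristic endpoints and $I_0, I_1$ contributions in the interior. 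This also confirms, by finite propagation speed along characteristics $du/d\rho=\pm 1/\rho$, that $\chi^{\sharp}$ and $\chi^{\flat}$ are compactly supported in $u$ for each $\rho$.

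\textbf{Main obstacle.} The principal difficulty is not the formal derivation of the representation but the analysis of the kernels needed downstream: understanding the growth of $\chi^{\sharp}, \chi^{\flat}$ and their $\rho$-derivatives as $\rho\to\infty$, where the modified Bessel functions grow exponentially along certain rays inside the cone, and controlling their singular behaviour at the characteristic boundary $|u|=\log\rho$. These estimates determine whether the representation can actually generate entropies with the prescribed growth/integrability required by the compensated compactness framework of the paper, and will dictate the precise admissible class of test functions $f,g$ in the full statement proved in \S\ref{sec:entropy}.
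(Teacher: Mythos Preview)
Your approach is essentially the same as the paper's. Both pass to the logarithmic variable $R=\log\rho$; the paper writes the transformed equation as $\eta_{RR}-\eta_{uu}-\eta_R=0$ and then \emph{directly verifies} by distributional computation that the explicit formula $\chi^\sharp(R,u)=\tfrac12\sgn(R)e^{R/2}I_0\big(\tfrac12\sqrt{R^2-u^2}\big)\mathds{1}_{|u|<|R|}$ satisfies the equation with the correct data on $\{R=0\}$, whereas you go one step further with the substitution $\eta=e^{R/2}v=\sqrt{\rho}\,v$ to reduce to the Klein--Gordon equation $v_{\tau\tau}-v_{uu}=\tfrac14 v$ and invoke its known Riemann function. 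These are two presentations of the same construction, and the convolution representation in your Step~2 is exactly how the kernels are used in the paper (Theorem~\ref{thm:entropykernelexpansionshighdensity}).

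Two small corrections. First, only $\chi^\flat$ carries Dirac masses on the characteristic boundary $|u|=|\log\rho|$; the kernel $\chi^\sharp$ is a bounded function supported in $\{|u|\leq|\log\rho|\}$ (note the absolute value, relevant for $\rho<1$). Second, the paper obtains $\chi^\flat$ not by a separate Riemann-function computation but via the identity $\chi^\flat=\chi^\sharp_R-\chi^\sharp$, which automatically gives the right data and is worth recording since it streamlines the later flux-kernel and derivative estimates.
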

The uniform estimates then allow us to pass to a measure-valued solution to the Euler equations of possibly unbounded support. The uniform integrability estimate available for the density (\textit{cf}.~Lemma \ref{lemma:densityintegrability}) gives that the product $\rho^\varepsilon p(\rho^\varepsilon)$ is uniformly locally integrable. For pressure functions satisfying \eqref{ass:pressure2}, this $L^2_{loc}$ bound is insufficient to deduce the usual $H^{-1}$-compactness of weak entropy dissipation measures for the viscous solutions, giving only $W^{-1,q}$-compactness of these measures for some $q<2$. Applying the refinement of the div-curl lemma due to Conti, Dolzmann and M\"uller \cite{CDM}, we nevertheless show the Tartar-Murat commutation relation for the measure-valued solutions. As a final step, we develop a reduction framework for  measure-valued solutions satisfying this relation, extending the results of Chen-Perepelitsa \cite{ChenPerep1} and LeFloch-Westdickenberg \cite{LeFlochWestdickenberg}, and thereby deducing the strong convergence of the solutions of the Navier-Stokes equations to the entropy solutions of the Euler equations with general pressure law. 

The structure of the paper is as follows. In \S\ref{sec:entropy}, we give a detailed analysis of the entropy functions of the Euler equations with pressure law satisfying \eqref{ass:pressure1}--\eqref{ass:pressure3}, based on a representation formula. To develop this representation formula in the high density region, we construct the entropy kernels of Theorem \ref{thm:isothermalentropykernels-intro} for the isothermal gas dynamics. Next, in \S\ref{sec:NSenergyestimates}, we give the essential uniform estimates on the solutions to the Navier-Stokes equations, including local higher integrability estimates on the density and velocity. The estimate for the velocity relies on a precise construction of an entropy pair using the representation formula to gain exact control at high densities. In \S\ref{sec:Youngmeasureframework}, we deduce the existence of a Young measure solution, obtained as a limit of the solutions to the Navier-Stokes equations, and show that this limit satisfies the Tartar commutation relation. With this, in \S\ref{sec:generalreduction}, we use the commutation relation to build a reduction framework showing that the measure-valued solution reduces to a Dirac mass, thus giving strong convergence of the approximate solutions. Finally, in \S\ref{sec:finalproof}, we give the proof of the main theorem, Theorem \ref{thm:Navier-Stokes-limit}. In Appendix \ref{sec:appendix}, we make some brief remarks on obtaining the physical entropy inequality.

\textbf{Acknowledgement:}
Both authors were supported by Engineering and Physical Sciences Research Council [EP/L015811/1]. The authors also thank Gui-Qiang Chen for useful discussions.

\section{Entropy and entropy flux kernels}\label{sec:entropy}
In this section, we analyse the structure of weak entropies to the isentropic Euler equations with general pressure law and derive a fundamental solution of the entropy equation that generates such entropies. First, we recall that the entropy kernel $\chi=\chi(\rho,u,s)$ is a fundamental solution of the entropy equation \eqref{eq:entropyequation}, i.e.~$\chi(\rho,u,s)$ solves
\beqs
\begin{cases}
\chi_{\rho\rho}-k'(\rho)^2\chi_{uu}=0;\\
\chi|_{\rho=0}=0;\\
\chi_\rho|_{\rho=0}=\de_{u=s};
\end{cases}
\eeqs
where \beqs
k(\rho):=\int_0^\rho\frac{\sqrt{p'(s)}}{s}\,ds=\rho^\th+O(\rho^{3\th}) \text{ as } \rho\to0 \text{ by \eqref{ass:pressure1}}.\eeqs
 We note, \textit{cf.}~\cite{ChenLeFloch}, that this equation is invariant under Galilean transformations, and so $\chi(\rho,u,s)=\chi(\rho,u-s,0)=\chi(\rho,0,s-u)$. We therefore write, in a slight abuse of notation, $\chi=\chi(\rho,u-s)$.
It was shown in \cite[Theorems 2.2--2.3]{ChenLeFloch} and \cite{ChenLeFloch2} that for pressure laws satisfying \eqref{ass:pressure1} and \eqref{ass:pressure3}, this equation is well-posed, with solution
\beq\label{eq:entropyexpansion}
\chi(\rho,u-s)=a_\sharp(\rho)G_\la(\rho,u-s)+a_\flat(\rho)G_{\la+1}(\rho,u-s)+g_1(\rho,u-s)\geq 0,
\eeq
where $G_\la(\rho,u-s)=[k(\rho)^2-(u-s)^2]_+^\la$, $\la=\frac{3-\ga}{2(\ga-1)}>0$, and where the remainder function $g_1$ and its fractional derivative $\partial_u^{\la+1}g_1$ are H\"older continuous and, for any fixed $\rho_{\max}>0$, satisfy
$$|g_1(\rho,u-s)|\leq C(\rho_{\max})[k(\rho)^2-(u-s)^2]_+^{\la+1+\al_0}$$
for $0\leq\rho\leq\rho_{\max}$, and for some $\al_0\in(0,1)$.
Moreover, there exist constants $M_\la>0$ and $C=C(\rho_{\max})>0$ such that the coefficients $a_\sharp(\rho)$ and $a_\flat(\rho)$ satisfy, for $0\leq\rho\leq\rho_{\max}$,
\beqas
&a_\sharp(\rho)=M_\la k(\rho)^{-\la}k'(\rho)^{-\half}>0;\\
&a_\sharp(\rho)+|a_\flat(\rho)|\leq C(\rho_{\max}).
\eeqas
The associated entropy flux kernel $\sigma(\rho,u,s)$ satisfies the equation
\beqs
\begin{cases}
(\sigma-u\chi)_{\rho\rho}-k'(\rho)^2(\sigma-u\chi)_{uu}=\frac{p''(\rho)}{\rho}\chi_u;\\
(\sigma-u\chi)|_{\rho=0}=0;\\
(\sigma-u\chi)_\rho|_{\rho=0}=0;
\end{cases}
\eeqs
where we recall from \cite{ChenLeFloch} that the difference $\sigma-u\chi$ satisfies the same Galilean invariance property as $\chi$. The solution is given by the expansion
\beq\label{eq:fluxexpansion}
(\sigma-u\chi)(\rho,u-s)=-(u-s)\big(b_\sharp(\rho)G_\la(\rho,u-s)+b_\flat(\rho)G_{\la+1}(\rho,u-s)\big)+g_2(\rho,u-s).
\eeq
The coefficients $b_\sharp(\rho)$ and $b_\flat(\rho)$ satisfy, for $0\leq\rho\leq\rho_{\max}$,
\beqas
&b_\sharp(\rho)=M_\la \rho k(\rho)^{-\la-1}k'(\rho)^{\half}=\frac{\rho k'(\rho)}{k(\rho)}a_\sharp(\rho)>0;\\
&b_\sharp(\rho)+|b_\flat(\rho)|\leq C(\rho_{\max}).
\eeqas
The remainder $g_2$ and its fractional derivative $\partial_u^{\la+1}g_2$ are H\"older continuous and, for $0\leq\rho\leq\rho_{\max}$,
$$|g_2(\rho,u-s)|\leq C(\rho_{\max})[k(\rho)^2-(u-s)^2]_+^{\la+1+\al_0} \quad \text{for some } \alpha_0 \in (0,1).$$

We recall the following proposition from \cite{ChenLeFloch} relating the coefficients of the entropy and entropy flux kernels. This property is crucial for the convergence argument of \S \ref{sec:generalreduction}.
\begin{prop}[{\cite[Proposition 2.4]{ChenLeFloch}}]\label{prop:coefficient}
The coefficients in the expansions \eqref{eq:entropyexpansion}--\eqref{eq:fluxexpansion} satisfy
$$D(\rho):=a_\sharp(\rho)b_\sharp(\rho)-k(\rho)^2\big(a_\sharp(\rho)b_\flat(\rho)-a_\flat(\rho)b_\sharp(\rho)\big)>0.$$
\end{prop}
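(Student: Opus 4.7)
The plan is to establish positivity of $D(\rho)$ by deriving its $\rho$-evolution from the PDEs satisfied by the entropy and flux kernels, and then propagating favourable boundary data at $\rho=0$.

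First, I would extract ODEs for the subleading coefficients $a_\flat, b_\flat$. Substituting the expansion \eqref{eq:entropyexpansion} into the entropy equation $\chi_{\rho\rho}-k'(\rho)^2\chi_{uu}=0$, and the expansion \eqref{eq:fluxexpansion} into its flux counterpart, one matches the successively less-singular contributions at the boundary $u-s=\pm k(\rho)$ of the support of $G_\la$. The most singular matching recovers the explicit formulas for $a_\sharp,b_\sharp$ already recorded in the excerpt. The next matching level yields first-order linear inhomogeneous ODEs for $a_\flat$ and $b_\flat$, whose source terms are built from $a_\sharp,b_\sharp,k,k'$ and the pressure data.

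Second, introduce the Wronskian-type combination
\[
\mathcal{W}(\rho):=a_\sharp(\rho)b_\flat(\rho)-a_\flat(\rho)b_\sharp(\rho),
\]
so that $D=a_\sharp b_\sharp-k^2\mathcal{W}$. Because the ODEs for $a_\flat$ and $b_\flat$ have parallel structure, differentiating $\mathcal{W}$ and substituting should yield a first-order linear ODE for $\mathcal{W}$ that is explicitly integrable. Using the identity $b_\sharp=\tfrac{\rho k'}{k}a_\sharp$, one has $a_\sharp b_\sharp=M_\la^2\,\rho\,k(\rho)^{-2\la-1}$, and its derivative combines with that of $k^2\mathcal{W}$ so that $D$ itself satisfies a linear ODE of the form $D'(\rho)=f(\rho)D(\rho)+g(\rho)$, with $f,g$ expressible in closed form in $k,k'$ and $p$.

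Third, I would analyse the boundary behaviour as $\rho\to 0^+$. The initial conditions $\chi|_{\rho=0}=0$, $\chi_\rho|_{\rho=0}=\de_{u=s}$ and their flux analogues force $a_\flat,b_\flat$ to vanish fast enough that $k(\rho)^2\mathcal{W}(\rho)=o(a_\sharp(\rho)b_\sharp(\rho))$ as $\rho\to 0^+$; hence $D(\rho)>0$ on some initial interval. Integrating the linear ODE obtained in the previous step then propagates positivity of $D$ to all $\rho>0$ where the expansions are valid. The main obstacle is the matching step: since $G_\la$ is not smooth across the boundary of its support for non-integer $\la$, matching coefficients must be justified in the weak / fractional-derivative sense of \cite{ChenLeFloch}, and extracting clean ODEs for $a_\flat,b_\flat$ requires careful distributional bookkeeping. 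Once those ODEs are in hand, the Wronskian computation and ODE integration are essentially algebraic.
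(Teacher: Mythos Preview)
The paper does not give its own proof of this proposition; it is simply recalled from \cite[Proposition 2.4]{ChenLeFloch}, so there is nothing in the present paper to compare your attempt against directly.

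That said, your outline is essentially the strategy of \cite{ChenLeFloch}: substitute the expansions into the kernel equations, match singularities at $s-u=\pm k(\rho)$ to obtain first-order linear ODEs for $a_\flat$ and $b_\flat$, and then analyse the combination $D(\rho)$. One point where your sketch diverges from the original argument is the final step. You propose to derive an ODE $D'=fD+g$ and propagate positivity from $\rho=0^+$; but this only works if you can show $g\geq 0$ (or otherwise control the inhomogeneity), which you have not checked and which is not obvious from the structure you describe. In \cite{ChenLeFloch} the route is more direct: once the ODEs for $a_\flat,b_\flat$ are solved via integrating factors, the combination $D(\rho)$ is computed in closed form and shown to equal a manifestly positive expression built from $k,k',p'$ and the genuine-nonlinearity condition \eqref{ass:pressure3}. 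Your Wronskian-ODE idea is a reasonable alternative, but as written it has a gap at the propagation step; filling it would likely require the same explicit computation that makes the direct approach work.
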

In the high density regime, we control the entropy kernels via convolution with fundamental solutions of the isothermal gas dynamics,
$$\chi(\rho,u)=\chi_\rho(1,u)*\chi^\sharp(\rho,u)+\chi(1,u)*\chi^{\flat}(\rho,u),$$
where the fundamental solutions $\chi^\sharp$ and $\chi^\flat$ solve
\beqs
\begin{Bmatrix}
&\chi^{\sharp}_{\rho\rho}-\frac{1}{\rho^2}\chi^{\sharp}_{uu} = 0;   &&\chi^{\flat}_{\rho\rho}-\frac{1}{\rho^2}\chi^{\flat}_{uu} = 0;\\
&\chi^{\sharp}\rvert_{\rho=1} = 0; &&\chi^{\flat}\rvert_{\rho=1} = \delta_{u=0};\\
&\chi^{\sharp}_{\rho}\rvert_{\rho = 1} = \delta_{u=0}; &&\chi^{\flat}_{\rho}\rvert_{\rho = 1} = 0.
\end{Bmatrix}
\eeqs

\subsection{Entropy kernels for isothermal gas dynamics}
 Recalling the entropy equation for the isothermal gas dynamics,
\begin{equation*}
  \eta_{\rho\rho}-\frac{1}{\rho^2}\eta_{uu}=0,
\end{equation*}
we follow \cite{LeFlochShelukhin} in changing coordinates to $(R,u)$
coordinates, where $R=\log\rho$, and obtain
\begin{equation*}
 \eta_{RR}-\eta_{uu}-\eta_R=0.
\end{equation*}
The function $f(y)=J_0(\half\sqrt{y})$, (so $f(-y^2)=I_0(\half y)$) solves the ordinary differential equation
$$yf''(y)+f'(y)+\frac{1}{16}f(y)=0,$$
with $f(0)=1$, $f'(0)=-\frac{1}{16}$. Here we denote by $J_0$, respectively $I_0$, the Bessel function, respectively modified Bessel function, of the first kind. A group theoretic motivation for the importance of this ordinary differential equation to the entropies of the isothermal equations may be found in \cite{LeFlochShelukhin}.

\begin{thm}[Isothermal Entropy Kernels]\label{thm:isothermalentropykernels}
 The function \beq
 \chi^{\sharp}(R,u-s):=\frac{1}{2}\sgn(R)e^{\frac{R}{2}}f(|u-s|^2-R^2)\mathds{1}_{|u-s|<|R|}
 \eeq
 and measure (considered for fixed $R$ as a measure in $u-s$)
 \beqa
 \chi^{\flat}(R,u-s):=&\,\half e^{R/2}\big(\de_{u-s=R}+\de_{u-s=-R}\big)\\
 &-\half e^{R/2}\Big(\half f\big((u-s)^2-R^2\big)+2Rf'\big((u-s)^2-R^2\big)\Big)\mathds{1}_{|u-s|<|R|}
 \eeqa
 solve the problems
 \begin{equation}
  \begin{Bmatrix}
   \chi^{\sharp}_{RR}-\chi^{\sharp}_{uu}-\chi^{\sharp}_R=0; & \chi^{\flat}_{RR}-\chi^{\flat}_{uu}-\chi^{\flat}_R=0;\\
   \lim_{R\rightarrow0}\chi^{\sharp}(R,\cdot)=0; & \lim_{R\rightarrow0}\chi^{\flat}(R,\cdot)=\delta_{u=s};\\
   \lim_{R\rightarrow0}\chi^{\sharp}_R(R,\cdot)=\delta_{u=s}; &  \lim_{R\rightarrow0}\chi^{\flat}_R(R,\cdot)=0;
  \end{Bmatrix}
 \end{equation}
 respectively in the sense of distributions for $(R,u)\in\mathbb{R}^2$. Moreover,
 \begin{equation*}
 \lim_{R\rightarrow-\infty}\chi^{\sharp}(R,\cdot)= \lim_{R\rightarrow-\infty}\chi^{\flat}(R,\cdot)=0,
 \end{equation*}
and the measure $\chi^{\flat}(R,u-s)$ may be expressed as $\chi^{\flat}(R,u-s)=\chi^{\sharp}_R(R,u-s)-\chi^{\sharp}(R,u-s)$.
\end{thm}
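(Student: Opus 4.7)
The plan is to verify everything for $\chi^\sharp$ directly, and then deduce the statements for $\chi^\flat$ from the identity $\chi^\flat = \chi^\sharp_R - \chi^\sharp$. First I would substitute $\chi^\sharp = e^{R/2}\tilde\chi^\sharp$, which eliminates the first-order term and reduces the PDE to the Klein--Gordon-type equation $\tilde\chi_{RR}-\tilde\chi_{uu} = \tfrac{1}{4}\tilde\chi$ for $\tilde\chi^\sharp = \tfrac{1}{2}\sgn(R) f((u-s)^2-R^2)\mathds{1}_{|u-s|<|R|}$. In the interior of the cone, writing $w=(u-s)^2-R^2$ and computing classical derivatives, the PDE collapses to $-4\bigl(f'(w)+wf''(w)\bigr)=\tfrac{1}{4}f(w)$, which is exactly the ODE $wf''(w)+f'(w)+\tfrac{1}{16}f(w)=0$ that $f$ satisfies. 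Equivalently, in characteristic coordinates $\xi=R+(u-s)$, $\zeta=R-(u-s)$, the PDE reads $\tilde\chi^\sharp_{\xi\zeta}=\tfrac{1}{16}\tilde\chi^\sharp$, and the chain rule gives $\partial_{\xi\zeta} f(-\xi\zeta) = \xi\zeta f''(-\xi\zeta) - f'(-\xi\zeta) = \tfrac{1}{16}f(-\xi\zeta)$ by the same ODE.

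The cone boundary $\{|u-s|=|R|\}$ consists of characteristic lines of the wave operator, across which $\chi^\sharp$ jumps from $0$ outside to $\tfrac{1}{2}\sgn(R)e^{R/2}$ inside (using $f(0)=1$); distributional differentiation therefore produces line-measure contributions. A direct computation using $\partial_R\mathds{1}_{|u-s|<|R|} = \sgn(R)(\delta_{u-s=R}+\delta_{u-s=-R})$ in the sense of measures yields $\chi^\sharp_R = \sgn(R)e^{R/2}\bigl(\tfrac{1}{4}f(w)-Rf'(w)\bigr)\mathds{1}_{|u-s|<|R|} + \tfrac{1}{2}e^{R/2}(\delta_{u-s=R}+\delta_{u-s=-R})$, and subtracting $\chi^\sharp$ reproduces the stated expression for $\chi^\flat$, establishing the identity. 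Pairing $\chi^\sharp_{RR}-\chi^\sharp_{uu}-\chi^\sharp_R$ against $\phi\in C_c^\infty(\mathbb{R}^2)$, the interior contributions vanish by the ODE, and the characteristic nature of the cone boundary forces the higher-order line-measure contributions from $\chi^\sharp_{RR}$ and $\chi^\sharp_{uu}$ to cancel exactly. Hence $\chi^\sharp$ solves the PDE distributionally, and so do $\chi^\sharp_R$ (by $R$-differentiation) and $\chi^\flat$ (by linearity of the identity).

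The initial conditions at $R=0$ for $\chi^\sharp$ follow by pairing with $\phi\in C_c^\infty(\mathbb{R})$ and applying Leibniz's rule on the shrinking interval of integration: $\langle\chi^\sharp(R,\cdot),\phi\rangle = \tfrac{1}{2}\sgn(R)e^{R/2}\int_{s-|R|}^{s+|R|} f(w)\phi\,du = O(|R|) \to 0$, while differentiating in $R$ produces the two endpoint contributions $\tfrac{1}{2}e^{R/2}f(0)\bigl(\phi(s+|R|)+\phi(s-|R|)\bigr)\to\phi(s)$, giving $\chi^\sharp_R|_{R=0}=\delta_{u=s}$. The conditions for $\chi^\flat$ then follow from the identity together with the PDE: $\chi^\flat|_{R=0} = \delta_{u=s}$ and $\chi^\flat_R|_{R=0} = (\chi^\sharp_{RR}-\chi^\sharp_R)|_{R=0} = \chi^\sharp_{uu}|_{R=0} = 0$ using $\chi^\sharp|_{R=0}=0$. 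The decay as $R\to-\infty$ uses the asymptotic $I_0(z)\sim e^z/\sqrt{2\pi z}$: on the support of any compactly supported test function, $e^{R/2}I_0(\tfrac{1}{2}\sqrt{R^2-(u-s)^2})$ is of order $|R|^{-1/2}$, and dominated convergence gives $\langle\chi^\sharp,\phi\rangle,\,\langle\chi^\flat,\phi\rangle\to 0$. The main technical obstacle is the careful bookkeeping of distributional jumps across the characteristic cone boundary, ensuring that the second-order delta contributions cancel in the PDE while the first-order ones assemble into precisely the measure part of $\chi^\flat$.
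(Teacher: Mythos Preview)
Your proposal is correct and follows essentially the same route as the paper: the substitution $\chi^\sharp=e^{R/2}\bar f$ reducing the equation to $\bar f_{RR}-\bar f_{uu}-\tfrac14\bar f=0$, verification via the ODE for $f$ together with distributional cancellation across the characteristic cone, explicit computation of $\chi^\sharp_R$, and deduction of the properties of $\chi^\flat$ from the identity $\chi^\flat=\chi^\sharp_R-\chi^\sharp$. Your write-up is in fact more explicit than the paper's, which compresses the distributional verification to ``direct calculation then yields'' and does not spell out the $R\to-\infty$ decay via the Bessel asymptotics.
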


\begin{proof}
The proof is direct, and similar to \cite[Theorem 4.2]{LeFlochShelukhin}. We first remark that it suffices by Galilean invariance to solve only in the case $s=0$. As a notational convenience, we write
 $$\bar{f}(u,R):=\sgn(R)f(u^2-R^2)\mathds{1}_{|u|<|R|}.$$
We calculate, in the sense of distributions,
$$\chi^\sharp_{RR}-\chi^\sharp_{uu}-\chi^\sharp_{R}=e^{\frac{R}{2}}\Big(\bar{f}_{RR}-\bar{f}_{uu}-\frac{\bar{f}}{4}\Big).$$
 Direct calculation then yields that, for any test function $\phi\in\mathcal{D}(\R^2)$,
 \begin{equation*}
 \Big\langle\bar{f}_{RR}-\bar{f}_{uu}-\frac{\bar{f}}{4}, \phi\Big\rangle_{\mathcal{D}'\times\mathcal{D}} = 0.
 \end{equation*}
 Thus $\chi^\sharp_{RR}-\chi^\sharp_{uu}-\chi^\sharp_R=0$. It remains to verify that $\chi^\sharp$ satisfies the initial conditions, posed on the line $R=0$. To this end, we fix $\phi=\phi(u), \psi=\psi(R)\in \mathcal{D}(\mathbb{R})$, and we calculate
\begin{align*}
 2\langle\chi^\sharp_R,\phi\psi\rangle_{\mathcal{D}'\times\mathcal{D}}=&\int_\mathbb{R}\phi(u)\Big(e^{-\frac{|u|}{2}}\psi(-|u|)+e^{\frac{|u|}{2}}\psi(|u|)\Big)\,du\\
 &+\int\int_{|R|>|u|}\sgn(R)\phi(u)\psi(R)e^{\frac{R}{2}}\Big(\frac{f}{2}-2Rf'\Big)\,du\,dR.
\end{align*}
We note the simple identities
\begin{equation*}
\begin{aligned}
 & \int_\mathbb{R}\phi(u)e^{\frac{|u|}{2}}\psi(|u|)\,du=\int_0^\infty\big(\phi(R)+\phi(-R)\big)e^{\frac{R}{2}}\psi(R)\,dR, \\
 & \int_\mathbb{R}\phi(u)e^{-\frac{|u|}{2}}\psi(-|u|)\,du=\int_{-\infty}^0 \big(\phi(R) + \phi(-R) \big)e^{\frac{R}{2}}\psi(R)\,dR.
 \end{aligned}
\end{equation*}
Hence, dropping $\psi$, we have obtained
\begin{equation*}
 \langle\chi^\sharp_R(R,\cdot),\phi(\cdot)\rangle_{\mathcal{D}'\times\mathcal{D}}=\int_{|u|<|R|}\frac{1}{2}\sgn(R)\phi(u)e^{\frac{R}{2}}\Big(\frac{f}{2}-2Rf'\Big)\,du +\frac{1}{2}e^{\frac{R}{2}}(\phi(R)+\phi(-R)),
\end{equation*}
which may alternatively be written as
\begin{equation*}
 \chi^\sharp_R(R,u)=\,\frac{1}{2}\sgn(R)e^{\frac{R}{2}}\Big(\frac{f(u^2-R^2)}{2}-2Rf'(u^2-R^2)\Big)\mathds{1}_{|u|<|R|} +\frac{1}{2}e^{\frac{R}{2}}(\delta_{u=R}+\delta_{u=-R}).
\end{equation*}
Now observe that if we let $\chi^\flat(R,u-s)=\chi^\sharp_R(R,u-s)-\chi^\sharp(R,u-s)$,
then $\chi^\flat_R=\chi^\sharp_{uu}=0$ on the line $\{R=0\}$. Moreover, $\chi^\flat(0,u)=\de_{u=0}$ and, as the entropy equation is a linear, constant coefficient partial differential equation, $\chi^\flat$ also satisfies the entropy equation.
\end{proof}
\begin{rmk}
This theorem provides two independent weak entropy kernels for the isothermal gas dynamics. We emphasize that $\chi^\sharp$ and $\chi^\flat$ only produce the weak entropies, and that there exist singular entropies that cannot be generated by these kernels. For example, the entropies constructed by Huang and Wang in \cite[Section 2]{HZ} correspond (in Fourier space) to measure-valued solutions of $x''(t)+ \frac{\xi^2}{t^2} x(t)=0$, and so are not generated by the two independent regular solutions of this equation, which both vanish in the limit as $t$ tends to $0$. This is in stark contrast to the case of the isentropic Euler equations for a $\gamma$-law, which have one weak entropy kernel and one strong entropy kernel  (\textit{cf.}~\cite{ChenLeFloch}). Moreover, the kernels for the isothermal dynamics produce stronger singularities in their derivatives than those corresponding to the $\ga$-law gas for $\ga>1$ as $\rho$ tends to $0$. Indeed, the derivatives of the kernels blow up $(\rho\sqrt{|\log\rho|})^{-1}$ faster than their polytropic counterparts at $\rho=0$.
\end{rmk}

To generate the entropy flux associated to an entropy for the isothermal gas dynamics, we associate an entropy flux kernel to each of the entropy kernels produced above. For any entropy pair $(\eta,q)$, we note that the function $Q=q-u\eta$ satisfies
\beq\label{eq:Qeqs}
Q_R=\eta_u,\qquad Q_u=\eta_R-\eta.
\eeq
From these relations, we obtain the following theorem. For simplicity, we work in $\{R>0\}$, since this corresponds to the region where the pressure law satisfying \eqref{ass:pressure2} is isothermal.
\begin{thm}[Isothermal Entropy Flux Kernels]\label{thm:isothermalfluxkernels}
The entropy flux kernels $\sigma^{\sharp}$ and $\sigma^{\flat}$ associated to $\chi^\sharp$ and $\chi^\flat$ respectively can be decomposed, for $R>0$, as
\beqas
\sigma^{\sharp}(R,u,s)=u\chi^{\sharp}(R,u-s)+h^{\sharp}(R,u-s),\\ \sigma^{\flat}(R,u,s)=u\chi^{\flat}(R,u-s)+h^{\flat}(R,u-s),
\eeqas
where, with the notation $a\vee b=\max\{a,b\}$,
\beqas
h^{\sharp}(R,u-s)=&\,\half \sgn(u-s)+\frac{\partial}{\partial u}\int_0^{R}\chi^{\sharp}(r,u-s)\,dr\\
=&\,\half \sgn(u-s)\Big(1-e^{|u-s|/2}\mathds{1}_{|u-s|<R}\Big)+\int_{|u-s|}^{R\vee |u-s|}(u-s)e^{r/2}f'\big((u-s)^2-r^2\big)\,dr,
\eeqas
and
\beqas
h^{\flat}(R,u-s)=&\,\chi^{\sharp}_u(R,u-s)-h^{\sharp}(R,u-s)\\
=&\,\half e^{R/2}\big(\de_{u-s=-R}-\de_{u-s=R}\big)+e^{R/2}(u-s)f'\big((u-s)^2-R^2\big)\mathds{1}_{|u-s|<R}\\
&-\half \sgn(u-s)\Big(1-e^{|u-s|/2}\mathds{1}_{|u-s|<R}\Big)-\int_{|u-s|}^{R\vee |u-s|}(u-s)e^{r/2}f'\big((u-s)^2-r^2\big)\,dr.
\eeqas
\end{thm}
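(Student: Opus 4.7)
The plan is to exploit the auxiliary relations $Q_R = \eta_u$, $Q_u = \eta_R - \eta$ from \eqref{eq:Qeqs}, applied to the ansatz $Q = h^\sharp$ or $Q = h^\flat$. Substituting $\eta = \chi^\sharp$ and using the identity $\chi^\flat = \chi^\sharp_R - \chi^\sharp$ from Theorem \ref{thm:isothermalentropykernels}, one sees that $h^\sharp$ must satisfy $h^\sharp_R = \chi^\sharp_u$ and $h^\sharp_u = \chi^\flat$; analogously $h^\flat_R = \chi^\flat_u$ and $h^\flat_u = \chi^\flat_R - \chi^\flat$. Both kernels depend on $u$ only through $v := u-s$, so one reduces to constructing the $h$'s as distributions on $(0,\infty)_R\times\R_v$.

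To construct $h^\sharp$, the natural candidate is the $R$-antiderivative $\tilde h^\sharp(R,v) := \partial_u\int_0^R \chi^\sharp(r,v)\,dr$, which tautologically satisfies $\tilde h^\sharp_R = \chi^\sharp_u$. To check $\tilde h^\sharp_u$, I interchange derivatives and invoke the entropy equation $\chi^\sharp_{uu} = \chi^\sharp_{RR} - \chi^\sharp_R$, so the integrand telescopes: $\int_0^R(\chi^\sharp_{rr}-\chi^\sharp_r)\,dr = (\chi^\sharp_R-\chi^\sharp)(R,v) - (\chi^\sharp_R-\chi^\sharp)(0,v) = \chi^\flat(R,v) - \delta_{v=0}$, using the initial data of Theorem \ref{thm:isothermalentropykernels}. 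Adding the $R$-independent corrector $\frac{1}{2}\sgn(v)$, whose $u$-derivative is precisely $\delta_{v=0}$, restores $h^\sharp_u = \chi^\flat$ without disturbing the $R$-equation, giving the first displayed form. For the explicit second form, substitute $\chi^\sharp(r,v) = \frac{1}{2}e^{r/2}f(v^2-r^2)\mathds{1}_{|v|<r}$ (valid for $r>0$), so $\int_0^R\chi^\sharp(r,v)\,dr = \int_{|v|}^R\frac{1}{2}e^{r/2}f(v^2-r^2)\,dr$ when $|v|<R$ and vanishes otherwise. Leibniz's rule produces a boundary contribution at $r=|v|$ of $-\frac{1}{2}\sgn(v)e^{|v|/2}$ (using $f(0)=1$), together with the interior term $\int_{|v|}^R v e^{r/2}f'(v^2-r^2)\,dr$; combining with $\frac{1}{2}\sgn(v)$ gives the stated formula, where I absorb the indicator into the notation $R\vee|v|$.

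For $h^\flat$, I verify that the candidate $h^\flat := \chi^\sharp_u - h^\sharp$ solves the right system: directly, $(\chi^\sharp_u - h^\sharp)_R = \chi^\sharp_{uR} - \chi^\sharp_u = \chi^\flat_u$, using $\chi^\flat = \chi^\sharp_R - \chi^\sharp$; and $(\chi^\sharp_u - h^\sharp)_u = \chi^\sharp_{uu} - \chi^\flat = (\chi^\sharp_{RR}-\chi^\sharp_R) - \chi^\flat = \chi^\flat_R - \chi^\flat$ by the entropy equation. The explicit form then follows by differentiating $\chi^\sharp$ in $u$: the indicator $\mathds{1}_{|v|<R}$ generates the Dirac masses at $v=\pm R$ with weights $\pm\frac{1}{2}e^{R/2}$, while the interior derivative yields $e^{R/2}\,v\,f'(v^2-R^2)\mathds{1}_{|v|<R}$, from which subtracting the already-computed $h^\sharp$ produces the stated expression. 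The main technical hurdle throughout is that every derivative lives in $\mathcal{D}'(\R^2)$ with nontrivial singular support on the characteristic lines $v=\pm R$ and at $v=0$; as in the proof of Theorem \ref{thm:isothermalentropykernels}, each manipulation above must be justified by pairing with a test function $\phi(u)\psi(R)\in\mathcal{D}(\R^2)$, turning the formal Leibniz computations into genuine integrations by parts where the Dirac contributions at $r=|v|$ and $v=\pm R$ can be tracked unambiguously.
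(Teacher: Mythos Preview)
Your proof is correct and follows essentially the same route as the paper: both use the relations \eqref{eq:Qeqs} to obtain $h^\sharp$ as $\partial_u\int_0^R\chi^\sharp\,dr$ plus the $\tfrac12\sgn$ corrector, then derive $h^\flat=\chi^\sharp_u-h^\sharp$ and compute $\chi^\sharp_u$ explicitly. The only cosmetic difference is that the paper writes $h^\sharp$ directly as the double integral $\int_0^R\chi^\sharp_u\,dr+\int_{-\infty}^u(\chi^\sharp_R-\chi^\sharp)(0,z)\,dz$ and reads off the corrector from the initial data, whereas you guess the $R$-antiderivative and verify the $u$-equation via the entropy equation; the content is identical.
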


\begin{proof}
By the Galilean invariance of the problem for $\sigma-u\chi$, it suffices to prove the identities in the case $s=0$. Arguing first for $\sigma^\sharp$ and $h^\sharp$, we see from relations \eqref{eq:Qeqs} that
\beqas
h^{\sharp}(R,u)=&\int_0^R\chi^{\sharp}_u(r,u)\,dr+\int_{-\infty}^u\big(\chi_R^{\sharp}(0,z)-\chi^{\sharp}(0,z)\big)\,dz\\ 
=&\,\frac{\partial}{\partial u}\int_0^R\chi^{\sharp}(r,u)\,dr+\mathds{1}_{u>0},
\eeqas
from the initial data for $\chi^{\sharp}$ on the line $\{R=0\}$. As the kernel only needs to be defined up to addition of a constant, we normalise the second term to $\half\sgn(u)$.
Considering now the first term, we calculate
\beqas
\int_0^R\chi_u^{\sharp}(r,u)\,dr=&\int_{|u|}^{R\vee|u|}ue^{r/2}f'(u^2-r^2)\,dr-\half\sgn(u)e^{|u|/2}+\half\sgn(u)e^{|u|/2}\mathds{1}_{|u|>R}\\
=&\int_{|u|}^{R\vee|u|}ue^{r/2}f'(u^2-r^2)\,dr-\half\sgn(u)e^{|u|/2}\mathds{1}_{|u|<R}.
\eeqas
For $\sigma^\flat$, $h^\flat$, we argue likewise to see
\beqas
h^{\flat}(R,u)=&\int_0^R\chi^{\flat}_u(r,u)\,dr+\int_{-\infty}^u\big(\chi^{\flat}_R(0,z)-\chi^{\flat}(0,z)\big)\,dz\\
=&\,\frac{\partial}{\partial u}\int_0^R\big(\chi^{\sharp}_r(r,u)-\chi^{\sharp}(r,u)\big)\,dr-\mathds{1}_{u>0}\\
=&\,\chi^{\sharp}_u(R,u)-h^{\sharp}(R,u),
\eeqas
up to addition of a constant, again using the initial conditions for $\chi^{\flat}$ on $\{R=0\}$. We now conclude by observing that
\beqs
\chi^{\sharp}_u(R,u)=\half e^{R/2}\big(\de_{u=-R}-\de_{u=R}\big)+e^{R/2}uf'(u^2-R^2)\mathds{1}_{|u|<R}.
\eeqs
\end{proof}
By construction, the entropy flux kernels $\sigma^\sharp(\rho,u,s)$ and $\sigma^\flat(\rho,u,s)$ generate isothermal entropy flux functions in the region $\rho>1$ associated to the isothermal entropies generated from $\chi^\sharp(\rho,u-s)$ and $\chi^\flat(\rho,u-s)$. In particular, the isothermal entropy generated by $\psi\in C^2(\R)$,
$$\eta^\psi(\rho,u)=\int_\R\psi(s)\chi^\sharp(\rho,u-s)\,ds$$
has an associated entropy flux given by
$$q^\psi(\rho,u)=\int_\R\psi(s)\sigma^\sharp(\rho,u,s)\,ds,$$
and likewise for the entropies and entropy fluxes generated by $\chi^\flat$ and $\sigma^\flat$.

The entropy and entropy flux kernels for the full pressure law may then be written for $\rho\geq 1$ as
\beqas
\chi(\rho,u)=\int_\R\Big(\chi_\rho(1,s)\chi^\sharp(\rho,u-s)+\chi(1,s)\chi^\flat(\rho,u-s)\Big)\,ds,\\
\sigma(\rho,u,0)=\int_\R\Big(\chi_\rho(1,s)\sigma^\sharp(\rho,u,s)+\chi(1,s)\sigma^\flat(\rho,u,s)\Big)\,ds.
\eeqas
\begin{rmk}
We note the following identity, which is clear from formal calculation,
\beq\label{eq:chiintegralidentity}
\int_\R\big(\rho\chi_\rho(\rho,u-s)-\chi(\rho,u-s)\big)\,ds=0 \qquad \text{for all } (\rho,u)\in\mathbb{R}^2_+.
\eeq
Indeed, defining $\chi^\de=\chi*\phi^\de$ where $\phi^\delta$ is the standard mollifier in $\mathbb{R}^2$, we get
\beqas
\frac{\partial}{\partial\rho}\int_\R\big(\rho\chi^\de_\rho-\chi^\de\big)(\rho,u-s)\,ds=\int_\R\rho\chi^\de_{\rho\rho}(\rho,u-s)\,ds=0,
\eeqas
where the final equality follows from the entropy equation and the evenness of $\chi^\delta$. Hence,
$$\int_\R\big(\rho\chi^\de_\rho(\rho,u-s)-\chi^\de(\rho,u-s)\big)\,ds=c_\de,$$
for some constant $c_\de\in\R$. Passing $\de\to0$ and using the initial data for $\chi(\rho,u-s)$, we verify \eqref{eq:chiintegralidentity}.
\end{rmk}
The previous theorem gives explicit formulae for the entropy flux kernels $\sigma^\sharp$ and $\sigma^\flat$ for the isothermal gas dynamics. However, in practice, it will be convenient for us to exploit a further property of these kernels: for isothermal entropy pairs $(\eta,q)$, the difference $Q=q-u\eta$ is also an entropy. Indeed, by \eqref{eq:Qeqs}, we see
\begin{equation}\label{eq:waveQ}
Q_{RR}-Q_{uu}-Q_R=0,
\end{equation}
and hence we may write
$$Q(R,u)=Q_R(1,u)*\chi^\sharp(R,u)+Q(1,u)*\chi^\flat(R,u).$$
In particular, returning to the entropy and entropy flux kernels for the general pressure law and the usual $(\rho,u)$ coordinates, we summarise the above as the following theorem.
\begin{thm}[Entropy Kernel Expansions]\label{thm:entropykernelexpansionshighdensity}
The entropy kernel and entropy flux kernel may be written, for $\rho\geq 1$, as
\beqa\label{eq:entropyexpansionshighdensity}
\chi(\rho,u-s)=&\,\chi_\rho(1,u-s)*\chi^\sharp(\rho,u-s)+\chi(1,u-s)*\chi^\flat(\rho,u-s),\\
(\sigma-u\chi)(\rho,u-s)=&\,(\sigma-u\chi)_\rho(1,u-s)*\chi^\sharp(\rho,u-s)+(\sigma-u\chi)(1,u-s)*\chi^\flat(\rho,u-s),
\eeqa
where
\beq
 \chi^{\sharp}(\rho,u-s)=\frac{1}{2}\sqrt{\rho}I_0\Big(\frac{\sqrt{(\log\rho)^2-|u-s|^2}}{2}\Big)\mathds{1}_{|u-s|<\log\rho}
 \eeq
 and
 \begin{align}
 \chi^{\flat}(\rho,u-s)=&\,\half \sqrt{\rho}\big(\de_{u-s=\log\rho}+\de_{u-s=-\log\rho}\big)\nonumber\\
 &-\frac{1}{4} \sqrt{\rho}I_0\Big(\frac{\sqrt{(\log\rho)^2-|u-s|^2}}{2}\Big)\mathds{1}_{|u-s|<\log\rho}\\
 &+\frac{1}{4}\sqrt{\rho}\frac{\log\rho}{\sqrt{(\log\rho)^2-|u-s|^2}}I_1\Big(\frac{\sqrt{(\log\rho)^2-|u-s|^2}}{2}\Big)\mathds{1}_{|u-s|<\log\rho}.\nonumber
 \end{align}
\end{thm}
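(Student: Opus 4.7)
The plan is to treat the two assertions separately: first the abstract convolution identities, then the explicit Bessel representations of $\chi^{\sharp}$ and $\chi^{\flat}$. In the isothermal regime $\rho\geq 1$, the pressure law gives $p'(\rho)=1$, hence $k'(\rho)^2=1/\rho^2$, so both the entropy equation \eqref{eq:entropyequation} and its counterpart for $Q=\sigma-u\chi$ (after invoking \eqref{eq:Qeqs}--\eqref{eq:waveQ}) reduce to the isothermal equation $w_{\rho\rho}-\rho^{-2}w_{uu}=0$. This is a constant-coefficient linear PDE (in the $R=\log\rho$ variable) of hyperbolic type in the $R$-direction, so the continuation of any sufficiently regular solution from $\{\rho=1\}$ to $\{\rho>1\}$ is a well-posed Cauchy problem. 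The key idea is then that, by Theorem \ref{thm:isothermalentropykernels}, the measures $\chi^{\sharp}(\rho,\cdot)$ and $\chi^{\flat}(\rho,\cdot)$ are precisely the fundamental solutions of this Cauchy problem prescribed, respectively, by $(\chi^{\sharp},\chi^{\sharp}_{\rho})|_{\rho=1}=(0,\delta_{u=0})$ and $(\chi^{\flat},\chi^{\flat}_{\rho})|_{\rho=1}=(\delta_{u=0},0)$.

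Accordingly, to establish \eqref{eq:entropyexpansionshighdensity} I would check that both the left-hand side and the right-hand side solve the isothermal entropy equation in $(\rho,u)$ for $\rho\geq 1$ (the left-hand side by the observation above, the right-hand side by linearity and the fact that, as a function of $(\rho,u)$ with $s$ a parameter, each term is a convolution in $u$ of a distribution independent of $\rho$ with a fundamental solution), and that both agree at $\rho=1$ together with their first $\rho$-derivatives, thanks to the Cauchy data of $\chi^{\sharp},\chi^{\flat}$. Uniqueness for the Cauchy problem then identifies the two. The same procedure works verbatim for $\sigma-u\chi$ once one invokes \eqref{eq:waveQ}, since the Cauchy data at $\rho=1$ for the pair $(\sigma-u\chi,(\sigma-u\chi)_\rho)$ are prescribed exactly to reproduce the right-hand side of the second identity in \eqref{eq:entropyexpansionshighdensity} at $\rho=1$.

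For the explicit formulas, the task is to translate the $(R,u)$-expressions of Theorem \ref{thm:isothermalentropykernels} into $(\rho,u)$-coordinates using $R=\log\rho$, and convert the auxiliary function $f(y)=J_{0}(\tfrac{1}{2}\sqrt{y})$ at the negative arguments $y=|u-s|^{2}-R^{2}<0$ into modified Bessel functions. Writing $y=-z^{2}$ with $z=\sqrt{R^{2}-|u-s|^{2}}$, the identity $J_{0}(iw)=I_{0}(w)$ gives $f(-z^{2})=I_{0}(z/2)$, and differentiating with $J_{0}'=-J_{1}$ together with $J_{1}(iw)=iI_{1}(w)$ gives $f'(-z^{2})=-I_{1}(z/2)/(4z)$. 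Substituting $e^{R/2}=\sqrt{\rho}$ and $R=\log\rho$ into the formulas of Theorem \ref{thm:isothermalentropykernels} and collecting the indicator functions and Dirac masses on the characteristic cone $|u-s|=\log\rho$ produces the stated closed-form expressions.

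The main technical point, rather than an obstacle in the usual sense, is to handle the convolution step rigorously when $\chi(1,\cdot)$ and $\chi_{\rho}(1,\cdot)$ may contain singular contributions (for instance, endpoint Dirac masses inherited from the structure of the entropy kernel) and when $\chi^{\sharp},\chi^{\flat}$ are themselves measures in $u$. This can be addressed by carrying out the Cauchy argument in the space of distributions compactly supported in the $u$-direction on each slice $\rho=\mathrm{const}$, which is legitimate because the isothermal equation propagates support at finite speed in $R$. The Bessel identification poses no further difficulty beyond the elementary conversions indicated above.
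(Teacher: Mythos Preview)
Your proposal is correct and follows essentially the same route as the paper. In fact, the paper does not give a separate proof of this theorem at all: it is stated as a summary of the preceding development (Theorem~\ref{thm:isothermalentropykernels}, the observation \eqref{eq:waveQ} that $Q=\sigma-u\chi$ is itself an isothermal entropy, and the change of variables $R=\log\rho$), and your argument is exactly the spelled-out version of that summary---verify that both sides solve the isothermal entropy equation for $\rho\geq 1$, match Cauchy data at $\rho=1$ using the defining initial conditions of $\chi^{\sharp},\chi^{\flat}$, invoke uniqueness, and then convert $f(y)=J_{0}(\tfrac12\sqrt{y})$ and $f'(y)$ to $I_{0},I_{1}$ at negative arguments. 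Your remark on handling the convolution distributionally via finite propagation speed is a sensible way to make rigorous a step the paper leaves implicit.
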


\subsection{Singularities of the entropy kernels}
From \cite[Theorems 2.2--2.3]{ChenLeFloch}, the entropy kernels $\chi(\rho,u-s)$ and $\sigma(\rho,u,s)$ are supported in the set $\mathcal{K}:=\{(u-s)^2\leq k(\rho)^2\}$. Moreover, it is shown in \cite{ChenLeFloch} that $\chi(\rho,u-s)$, $\sigma(\rho,u,s)$ are smooth in the interior of $\mathcal{K}$, and H\"older continuous with exponent $\la$ up to the boundary of $\mathcal{K}$. We now analyse the structure of the singularities of the fractional derivatives $\partial_s^{\la+1}\chi$ and $\partial_s^{\la+1}\sigma$, recalling that the $\al$-th fractional derivative of a compactly supported function $g(s)$ is defined to be 
$$\partial_s^\al g(s)=g(s)*\Ga(-\al)[s]_+^{-\al-1}$$
in the sense of distributions, where $\Ga$ is the usual gamma function.

Define the function $f_\la(s):=[1-s^2]_+^\la$. Then we have from \cite[Proposition 3.4]{LeFlochWestdickenberg} and \cite[Lemma I.2]{LionsPerthameSouganidis}  that 
\beqas
\partial_s^{\la}f_\la(s)=&\,A^\la_1\big(H(s+1)+H(s-1)\big)+A^\la_2\big(\text{\rm Ci}(s+1)-\text{\rm Ci}(s-1)\big)+r(s),\\
\partial_s^{\la+1}f_\la(s)=&\,A^\la_1\big(\de(s+1)+\de(s-1)\big)+A^\la_2\big(\text{\rm PV}(s+1)-\text{\rm PV}(s-1)\big)\\
&+A^\la_3\big(H(s+1)-H(s-1)\big)+A^\la_4\big(\text{\rm Ci}(s+1)+\text{\rm Ci}(s-1)\big)+q(s),
\eeqas
where $\de$ is the Dirac mass, $\text{\rm PV}$ the principal value distribution, $H$ the Heaviside function and $\text{\rm Ci}$ is the Cosine integral, the functions $r(s)$ and $q(s)$ are compactly supported, H\"older continuous functions, and $A_j^\la\in \mathbb{C}$ for $j=1,\ldots,4$ are constants depending only on $\la$.

 We calculate the fractional derivative of $\chi(\rho,u-s)$ from \eqref{eq:entropyexpansion}:
\beq\label{eq:ds+1chi}
\partial_s^{\la+1}\chi(\rho,u-s)=a_\sharp(\rho)\partial_s^{\la+1}G_\la(\rho,u-s)+a_\flat(\rho)\partial_s^{\la+1}G_{\la+1}(\rho,u-s)+\partial_s^{\la+1}g_1(\rho,u-s).
\eeq
We observe that  
$G_\la(\rho,u-s)=k(\rho)^{2\la}f\big(\frac{s-u}{k(\rho)}\big)$
 and recall that $\de$ and $\text{\rm PV}$ are both homogeneous of degree $-1$. We use the chain rule to calculate
 \beqa\label{eq:dsGlambda}
 \partial_s^\la G_\la(\rho,u-s)=k(\rho)^\la\Big(&A_1^\la\big(H(s-u+k(\rho))+H(s-u-k(\rho))\big)\\
 &+A^\la_2\big(\text{\rm Ci}(s-u+k(\rho))-\text{\rm Ci}(s-u-k(\rho))\big)\Big)\\
 +k(\rho)^\la &r\Big(\frac{s-u}{k(\rho)}\Big),
 \eeqa
 and
\beqa\label{eq:ds+1Glambda}
\partial_s^{\la+1}G_\la(\rho,u-s)=&\,k(\rho)^\la\Big(A^\la_1\big(\de(s-u+k(\rho))+\de(s-u-k(\rho))\big)\\
&\qquad\quad+A^\la_2\big(\text{\rm PV}(s-u+k(\rho))-\text{\rm PV}(s-u-k(\rho))\big)\Big)\\
&+k(\rho)^{\la-1}\Big(A^\la_3\big(H(s-u+k(\rho))-H(s-u-k(\rho))\big)\\
&\qquad\qquad\quad+A^\la_4\big(\text{\rm Ci}(s-u+k(\rho))+\text{\rm Ci}(s-u-k(\rho))\big)\Big)\\
&+k(\rho)^{\la-1}\Big(-A^\la_4\log k(\rho)^2 +q\Big(\frac{s-u}{k(\rho)}\Big)\Big).
\eeqa
We will also require the expressions for the derivatives of the entropy flux kernel, $\sigma(\rho,u,s)$. Using the expansion \eqref{eq:fluxexpansion} and the identity $\partial_s^{\la+1}(sg)=s\partial_s^{\la+1}g+(\la+1)\partial_s^\la g$ for a generic function $g(s)$, we calculate
\beqa\label{eq:ds+1sigma}
\partial_s^{\la+1}&(\sigma-u\chi)(\rho,u-s)=(s-u)\partial_s^{\la+1}\big(b_\sharp(\rho)G_\la(\rho,u-s)+b_\flat(\rho)G_{\la+1}(\rho,u-s)\big)\\
&+(\la+1)\partial_s^{\la}\big(b_\sharp(\rho)G_\la(\rho,u-s)+b_\flat(\rho)G_{\la+1}(\rho,u-s)\big)+\partial_s^{\la+1}g_2(\rho,u-s).
\eeqa
Next, we consider, for $\rho\geq 1$, 
$$\chi(\rho,u)=\chi^{\sharp}(\rho,u)\ast\chi_\rho(1,u)+\chi^{\flat}(\rho,u)\ast\chi(1,u),$$
where we recall that convolution is with respect to the second variable.

Thus, distributing derivatives across the convolution,
$$\partial_s^{\la+1}\chi(\rho,u-s)=\chi^{\sharp}_s(\rho,u-s)\ast\partial_s^\la\chi_\rho(1,u-s)+\chi^{\flat}(\rho,u-s)\ast\partial_s^{\la+1}\chi(1,u-s).$$
From \eqref{eq:ds+1chi}--\eqref{eq:ds+1sigma}, we obtain the structure of these expressions as sums of measures and distributions with coefficients depending on $\rho$. The following lemma provides accurate control of the growth of these coefficients as the density grows large.

\begin{lemma}\label{lemma:fractionalderivativeexpansions}
For $\rho\geq1$, the expressions $\partial_s^{\la+1}\chi(\rho,u-s)$ and $\partial_s^{\la+1}\sigma(\rho,u,s)$ admit the following expansions:
\beqa\label{eq:chifractionalderivative}
\partial_s^{\la+1}\chi(\rho,u-s)=&\sum_\pm \Big(A_{1,\pm}(\rho)\de(s-u\pm k(\rho))+A_{2,\pm}(\rho)H(s-u\pm k(\rho))\\&\qquad+A_{3,\pm}(\rho)\text{\rm PV}(s-u\pm k(\rho))+A_{4,\pm}(\rho) \text{\rm Ci}(s-u\pm k(\rho))\Big)\\&+r_\chi(\rho,u-s),
\eeqa
and
\beqa\label{eq:sigmafractionalderivative}
\partial_s^{\la+1}(\sigma-u\chi)(\rho,u-s)=&\sum_\pm(s-u)\big(B_{1,\pm}(\rho)\de(s-u\pm k(\rho))+B_{2,\pm}(\rho)H(s-u\pm k(\rho))\\
&\qquad+B_{3,\pm}(\rho)\text{\rm PV}(s-u\pm k(\rho))+B_{4,\pm}(\rho) \text{\rm Ci}(s-u\pm k(\rho))\big)\\
&+\sum_\pm{B}_{5,\pm}(\rho)H(s-u\pm k(\rho))+{B}_{6,\pm}(\rho) \text{\rm Ci}(s-u\pm k(\rho))\\&+r_\sigma(\rho,u-s).
\eeqa
Moreover, the coefficients $A_{j,\pm}(\rho),B_{j,\pm}(\rho)$ all satisfy the following bound:
\beq\label{ineq:highdensityderivativecoeffbounds}
\sum_{\substack{j=1,\dots,4,\\ \pm}}|A_{j,\pm}(\rho)|+\sum_{\substack{j=1,\dots,6,\\ \pm}}|B_{j,\pm}(\rho)|\leq C\sqrt{\rho}\log\rho,
\eeq
where $C$ is independent of $\rho,u,s$. The remainder functions $r_\chi(\rho,u-s)$ and $r_\sigma(\rho,u-s)$ are compactly supported in the second variable, H\"older continuous functions such that
$$|r_\chi(\rho,u-s)|+|r_\sigma(\rho,u-s)|\leq C\rho,$$
where $C$ is independent of $\rho,u,s$.
\end{lemma}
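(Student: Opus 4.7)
The plan is to start from the convolution representation of Theorem \ref{thm:entropykernelexpansionshighdensity}, which states that for $\rho\geq 1$,
\beqas
\chi(\rho,u-s) &= \chi^\sharp(\rho,\cdot)\ast\chi_\rho(1,\cdot)+\chi^\flat(\rho,\cdot)\ast\chi(1,\cdot),\\
(\sigma-u\chi)(\rho,u-s) &= \chi^\sharp(\rho,\cdot)\ast(\sigma-u\chi)_\rho(1,\cdot)+\chi^\flat(\rho,\cdot)\ast(\sigma-u\chi)(1,\cdot),
\eeqas
with convolution in the second variable. I would apply $\partial_s^{\la+1}$ and distribute the derivative onto the $\rho=1$ factor, reducing the question to explicit convolutions of distributions. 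A crucial structural input is that $k(\rho)=k(1)+\log\rho$ for $\rho\geq 1$ by the isothermal assumption \eqref{ass:pressure2}, which is precisely what places the boundary singularities at $s-u=\mp k(\rho)$.

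Next, I would use the polytropic expansions \eqref{eq:entropyexpansion}--\eqref{eq:fluxexpansion} (valid at $\rho=1$ since $\rho_*=1$) together with the fractional derivative formulas \eqref{eq:dsGlambda}--\eqref{eq:ds+1Glambda} to decompose $\partial_s^{\la+1}\chi(1,\cdot)$, $\partial_s^{\la+1}\chi_\rho(1,\cdot)$ and their flux analogues as finite sums of $\de$, $\mathrm{PV}$, $H$, $\mathrm{Ci}$ supported at $s-u=\pm k(1)$, plus compactly supported H\"older continuous remainders. Convolving against $\chi^\flat(\rho,\cdot)$, the Dirac masses of strength $\sqrt\rho/2$ at $u-s=\pm\log\rho$ translate each singular piece to $u-s=\pm(\log\rho\pm k(1))$. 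The diagonal sign combinations yield precisely the boundary singularities at $\pm k(\rho)$ recorded in \eqref{eq:chifractionalderivative}--\eqref{eq:sigmafractionalderivative}, whereas the off-diagonal combinations produce apparent interior singularities at $\pm(\log\rho-k(1))$; these must cancel against the contribution of $\chi^\sharp\ast\partial_s^{\la+1}\chi_\rho(1,\cdot)$ and the regular part of $\chi^\flat\ast\partial_s^{\la+1}\chi(1,\cdot)$. The cancellation is consistent with $\chi(\rho,\cdot)$ being smooth in the interior of its support (since the Bessel-function formulae in Theorem \ref{thm:entropykernelexpansionshighdensity} are smooth away from the boundary), but must be verified explicitly by matching coefficients, which forms the combinatorial heart of the argument. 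For the flux decomposition I would also invoke the identity $\partial_s^{\la+1}((s-u)g)=(s-u)\partial_s^{\la+1}g+(\la+1)\partial_s^\la g$, which accounts for the additional Heaviside and $\mathrm{Ci}$ terms indexed $B_{5,\pm},B_{6,\pm}$ in \eqref{eq:sigmafractionalderivative}.

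For the coefficient bounds \eqref{ineq:highdensityderivativecoeffbounds} and the $C\rho$ bound on $r_\chi,r_\sigma$, I would invoke the large-argument asymptotics $I_0(x),I_1(x)\sim e^x/\sqrt{2\pi x}$. These give the sup-norm estimate $\|\chi^\sharp(\rho,\cdot)\|_\infty = O(\rho/\sqrt{\log\rho})$ and control the regular part of $\chi^\flat(\rho,\cdot)$ by $\sqrt\rho\log\rho/\sqrt{(\log\rho)^2-(u-s)^2}$ near the boundary. The $\sqrt\rho$ scaling of the singular coefficients is inherited from the Dirac strengths in $\chi^\flat$; the additional $\log\rho$ factor comes from integrating the $I_1$-contribution of $\chi^\flat$ against the compactly supported polytropic singular factors (which have width $2k(1)$); and the $O(\rho)$ bound on the H\"older remainder follows from the sup-norm estimates on the smooth parts of $\chi^\sharp,\chi^\flat$ composed with the H\"older remainders at $\rho=1$.

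The principal obstacle I anticipate is the verification of the interior-singularity cancellations and the careful combinatorial bookkeeping required to identify the precise coefficient contributions arising from each of the four convolution combinations (Dirac--Dirac, Dirac--regular, regular--Dirac, regular--regular). The Bessel asymptotics and fractional derivative identities are classical, but ensuring that all distributional singularities survive only at $u-s=\pm k(\rho)$ with the claimed $\sqrt{\rho}\log\rho$ scaling demands diligent attention.
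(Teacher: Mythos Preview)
Your approach is essentially the paper's, with two tactical differences worth noting.

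First, the paper distributes the fractional derivative asymmetrically on the $\chi^\sharp$ term:
\[
\partial_s^{\la+1}\chi(\rho,u-s)=\partial_s^{\la}\chi_\rho(1,u-s)\ast\partial_s\chi^{\sharp}(\rho,u-s)+\partial_s^{\la+1}\chi(1,u-s)\ast\chi^{\flat}(\rho,u-s),
\]
placing exactly one derivative on $\chi^\sharp$. This is convenient because $\partial_s\chi^\sharp$ consists of Dirac masses at $\pm\log\rho$ plus a compactly supported Lipschitz function, while $\partial_s^\la\chi_\rho(1,\cdot)$ involves at worst $\de,\mathrm{PV},H,\mathrm{Ci}$ via $\partial_s^\la G_{\la-1}=\partial_s^{(\la-1)+1}G_{\la-1}$. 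Your choice of placing all $\la+1$ derivatives on the $\rho=1$ factor forces $\partial_s^{\la+1}G_{\la-1}$ to appear, which carries $\de'$-type terms that must then be absorbed against the jump of $\chi^\sharp$ across its boundary; this works but is messier.

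Second, your ``principal obstacle''---the cancellation of apparent interior singularities at $\pm(\log\rho-k(1))$---can be bypassed. The expansion structure in \eqref{eq:chifractionalderivative}--\eqref{eq:sigmafractionalderivative}, with singularities only at $s-u=\mp k(\rho)$, is already established for every fixed $\rho>0$ by the Chen--LeFloch expansions \eqref{eq:ds+1chi}--\eqref{eq:ds+1sigma} (equivalently, by the interior smoothness of $\chi,\sigma$). The only new content of the lemma is the growth rate of the coefficients for large $\rho$. Since the interior contributions must sum to a H\"older function a priori, one reads off the boundary coefficients directly from the diagonal sign combinations in the convolution, and this is what the paper's terse ``a simple calculation in each case'' covers. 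Your explicit coefficient-matching verification would also succeed but is unnecessary labour.
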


\begin{proof}
We recall from Theorem \ref{thm:entropykernelexpansionshighdensity} that, for $\rho\geq 1$,
$$\chi(\rho,u-s)=\chi_\rho(1,u-s)*\chi^{\sharp}(\rho,u-s)+\chi(1,u-s)*\chi^{\flat}(\rho,u-s).$$
Thus we may distribute derivatives across the convolution as follows:
\beq\label{eq:distributedderivatives}
\partial_s^{\la+1}\chi(\rho,u-s)=\partial_s^{\la}\chi_\rho(1,u-s)*\partial_s\chi^{\sharp}(\rho,u-s)+\partial_s^{\la+1}\chi(1,u-s)*\chi^{\flat}(\rho,u-s).
\eeq
To calculate the first term in each convolution, we employ the expansion \eqref{eq:ds+1chi} to see
\beq\label{eq:2.20}
\partial_s^{\la+1}\chi(1,u-s)=a_\sharp(1)\partial_s^{\la+1}G_\la(1,u-s)+a_\flat(1)\partial_s^{\la+1}G_{\la+1}(1,u-s)+\partial_s^{\la+1}g_1(1,u-s),
\eeq
where expressions for $\partial_s^{\la+1}G_{\la+1}$ and $\partial_s^{\la+1}G_\la$ are given in \eqref{eq:dsGlambda} and \eqref{eq:ds+1Glambda}.

Moreover, as $\partial_\rho G_\la(\rho,u-s)=2\la k(\rho)k'(\rho)G_{\la-1}(\rho,u-s)$, we obtain from \eqref{eq:ds+1Glambda}
\beqa\label{eq:2.21}
\partial_s^\la\chi_\rho(1,u-s)=&\,2\la k(1)k'(1)a_\sharp(1)\partial_s^{\la}G_{\la-1}(1,u-s)+a_\sharp'(1)\partial_s^{\la}G_\la(1,u-s)\\
&+2(\la+1) k(1)k'(1)a_\flat(1)\partial_s^{\la}G_{\la}(1,u-s)+a_\flat'(1)\partial_s^{\la}G_{\la+1}(1,u-s)\\&+\partial_s^{\la}\partial_\rho g_1(1,u-s).
\eeqa
Returning to  \eqref{eq:distributedderivatives}, from Theorem \ref{thm:entropykernelexpansionshighdensity}, we have
\beqas
\partial_s\chi^{\sharp}(\rho,u-s)=&\,-\sqrt{\rho}(u-s)f'\big((u-s)^2-(\log\rho)^2\big)\mathds{1}_{|u-s|<\log\rho}\\
&+\half \sqrt{\rho}\big(\de_{u-s=\log\rho}-\de_{u-s=-\log\rho}\big),\\
\chi^{\flat}(\rho,u-s)=&\,-\half \sqrt{\rho}\Big(\half f\big((u-s)^2-(\log\rho)^2\big)+2\log\rho f'\big((u-s)^2-(\log\rho)^2\big)\Big)\mathds{1}_{|u-s|<\log\rho}\\&+\half \sqrt{\rho}\big(\de_{u-s=\log\rho}+\de_{u-s=-\log\rho}\big).
\eeqas
We observe that, up to the addition of a compactly supported Lipschitz function, these are sums of Dirac masses and Heaviside functions. For example, as $f(0)=1$,
$$f\big((u-s)^2-(\log\rho)^2\big)\mathds{1}_{|u-s|<\log\rho}=H(u-s+\log\rho)-H(u-s-\log\rho)+R(\rho,u-s),$$
where $R(\rho,u-s)$ is a Lipschitz continuous function such that $\supp\,R=\{|u-s|\leq \log\rho\}$ and $|R(\rho,u-s)|\leq C\rho$, for $C$ independent of $\rho,u,s$. 

Thus, in expanding \eqref{eq:distributedderivatives} using \eqref{eq:2.20}--\eqref{eq:2.21}, the terms that we obtain are convolutions of Dirac masses $\sqrt{\rho}\de(u-s\pm\log\rho)$, Heaviside functions $\sqrt{\rho}H(u-s\pm\log\rho)$, $\sqrt{\rho}\log\rho H(u-s\pm\log\rho)$, and compactly supported H\"older continuous functions with distributions $\psi$ of the following types:
$$\psi\in\{\de(u-s\pm k(1)),H(u-s\pm k(1)),\text{\rm PV}(u-s\pm k(1)),\text{\rm Ci}(u-s\pm k(1))\}.$$
A simple calculation in each case verifies that the obtained terms  in \eqref{eq:chifractionalderivative} satisfy the bounds of \eqref{ineq:highdensityderivativecoeffbounds} on the coefficients. The proof for \eqref{eq:sigmafractionalderivative} is similar.
\end{proof}

\begin{rmk}\label{rmk:chiHoelderbound}
In a similar way to the above, we see that $\chi(\rho,u-s)$ is $\al$-H\"older continuous with respect to $s$ with any exponent $\al\in[0,\la]$. Moreover, we may estimate, \textit{cf.}~\cite{ChenLeFloch,LeFlochWestdickenberg},
\beq
\|\chi(\rho,u-\cdot)\|_{C^\al(\R)}\leq \begin{cases} C\rho^{(2\la-\al)\th}, &\text{ for }\rho\leq 1,\\
C\frac{\rho}{\sqrt{\log\rho}}, &\text{ for }\rho > 1.
\end{cases}
\eeq
\end{rmk}

\section{Uniform estimates for the Navier-Stokes equations}\label{sec:NSenergyestimates}
In this section, we begin the analysis of the vanishing viscosity limit of the Navier-Stokes equations, \eqref{eq:NS}. We first recall the following theorem of Hoff \cite{Hoff}.

\begin{thm}[Existence of Solutions for Navier-Stokes Equations, \cite{Hoff}]\label{thm:Hoff}
Suppose that $(\rho^\eps_0,u^\eps_0)$ are smooth initial data with end-states $(\rho_\pm,u_\pm)$ such that $\rho_\pm>0$ and satisfy
\beqas
&\rho_0^\eps\in L^\infty(\R),\quad \essinf_{\R}\rho_0^\eps>0,\\
&\rho_0^\eps-\bar\rho,\,u_0^\eps-\bar u\in L^2(\R),
\eeqas
where the smooth, monotone reference functions $(\bar\rho,\bar u)$ are as in \eqref{eq:referencefunctions}.
Then the initial value problem \eqref{eq:NS} admits a unique, global, smooth solution $(\rho^\eps,u^\eps)$ satisfying also  $u^\eps(t,\cdot)-\bar{u}\in H^1(\R)$ for all $t>0$. Moreover, there exists  $c_\eps(t)>0$ depending on $\eps$, $t$ and the initial data, such that for all $(t,x)\in\R^2_+$, $\rho(t,x)\geq c_\eps(t)>0$. Finally, $\lim_{x\to\pm\infty}(\rho^\eps(t,x),u^\eps(t,x))=(\rho_\pm,u_\pm)$ for all $t\geq 0$.
\end{thm}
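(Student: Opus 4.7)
The plan is to follow the classical argument of Hoff, rewritten in terms of the perturbation variables $(\phi, \psi) := (\rho^\eps - \bar\rho, u^\eps - \bar u)$, which lie in $L^2(\R) \times L^2(\R)$ by the hypotheses on the data. First I would pass to Lagrangian mass coordinates $y$ satisfying $y_x = \rho^\eps$, so that the continuity equation collapses to $v_t = u_y$ for the specific volume $v = 1/\rho^\eps$, and the momentum equation becomes the quasilinear parabolic equation $u_t + p(1/v)_y = \eps(u_y/v)_y$. Because $(\bar\rho, \bar u)$ is stationary outside $[-L_0, L_0]$, subtracting it yields a perturbation system for $(\phi, \psi)$ whose inhomogeneities are smooth, compactly supported forcings, harmless under a Gronwall argument.

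Two a priori estimates then dominate the analysis. The first is the standard energy identity obtained by pairing the momentum equation against $\psi$, or equivalently integrating the flux identity for the relative mechanical energy $\overline{\eta^*}$; after controlling the compactly supported forcing, this gives
\beqs
\|\phi(t)\|_{L^2}^2 + \|\psi(t)\|_{L^2}^2 + \eps\int_0^t \|u_x^\eps(\tau)\|_{L^2}^2 \, d\tau \leq C(\eps, t, \text{data}).
\eeqs
The second, and the crux of the theorem, is the strict positivity and uniform upper bound on $\rho^\eps$. Introducing the effective viscous flux $F := \eps u_x^\eps - (p(\rho^\eps) - p(\bar\rho))$, the momentum equation may be rewritten along particle paths as an ODE of the schematic form
\beqs
\eps \frac{D}{Dt}\log \rho^\eps + (p(\rho^\eps) - p(\bar\rho)) = -F + (\text{lower order}),
\eeqs
where $D/Dt$ is the material derivative. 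The energy bound places $\eps u_x^\eps$ in $L^2_{t,x}$ and thereby controls $F$ in $L^\infty_t L^2_x$; together with monotonicity of $p$ this produces, via a second Gronwall argument along characteristics, pointwise bounds $0 < c_\eps(t) \leq \rho^\eps(t,x) \leq C_\eps(t) < \infty$ on any finite time interval.

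With these estimates in hand, local-in-time smooth solutions are constructed by a contraction mapping in a high-regularity Sobolev space: freeze $v$ in the diffusion coefficient, solve the resulting linear parabolic equation for $u$, update $v$ via transport, and iterate; the positive lower bound on $\rho^\eps$ ensures uniform parabolicity at each step. The a priori estimates above then exclude finite-time breakdown, so the local solution extends globally, and higher regularity is obtained by differentiating the equations and re-running the energy argument. The asymptotic behaviour $(\rho^\eps, u^\eps)(t, x) \to (\rho_\pm, u_\pm)$ as $x \to \pm\infty$ follows from the $H^1$ control of $(\phi, \psi)$ via $H^1(\R) \hookrightarrow C_0(\R)$ together with the fact that $(\bar\rho, \bar u)$ already attains the end states outside $[-L_0, L_0]$. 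The main obstacle is the global propagation of the density lower bound through the effective viscous flux identity; once this is secured, the remainder of the argument is standard parabolic theory.
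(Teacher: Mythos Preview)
The paper does not prove this theorem; it is simply \emph{recalled} from Hoff's paper \cite{Hoff} with the sentence ``We first recall the following theorem of Hoff,'' and no argument is given. Your sketch is a reasonable outline of the standard approach in that reference (energy estimate, density bounds via the effective viscous flux or Kanel-type identity, local existence plus continuation), so there is nothing to compare against here beyond noting that the paper treats the result as a black box.
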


Of crucial importance is the lower bound on the density. This guarantees that the singularities of the viscous equations are avoided for all times, provided they are avoided initially (although we do, of course, expect cavitation to occur in the vanishing viscosity limit).

\subsection{Initial estimates}\label{subsec:3.1}
We now proceed by making several uniform estimates, that is, estimates that are independent of $\eps\in(0,\eps_0]$ for some fixed $\eps_0>0$, on the physical viscosity solutions given by Theorem \ref{thm:Hoff}. Throughout this section, we will denote constants independent of $\eps$ by $M$. The pair $(\rho,u)$ will always be the smooth solution of \eqref{eq:NS} guaranteed by the theorem above (we drop the explicit dependence of the functions on $\eps$ for notational simplicity and we write $m=\rho u$).

The first of our uniform estimates is the now standard estimate on the relative mechanical energy.
\begin{lemma}\label{lemma:mainenergyestimate}
Let $E[\rho_0,u_0]\leq E_0<\infty$ for some constant $E_0>0$ independent of $\eps$ and suppose that $(\rho,u)$ is the smooth solution of the Cauchy problem \eqref{eq:NS} with initial data $(\rho_0,u_0)$. Then, for any $T>0$, there exists a constant $M>0$, independent of $\eps$ but depending on $E_0,T,\bar\rho, \bar u$, such that
\beq
\sup_{t\in[0,T]}E[\rho,u](t)+\int_0^T\int_\R\eps|u_x|^2\,dx\,dt\leq M.
\eeq
\end{lemma}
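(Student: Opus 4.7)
The plan is to carry out a relative-energy estimate adapted to non-trivial end states. I would multiply the momentum equation of \eqref{eq:NS} by $(u - \bar u)$ and combine with the continuity equation to obtain a local balance law for $\tfrac{1}{2}\rho(u - \bar u)^2$, then add the time-derivative of the relative internal energy $e^*(\rho,\bar\rho)$, using $p = \rho^2 e'$ to convert $\partial_t e^*$ into a flux divergence plus source terms supported on $\{\bar\rho_x \neq 0\}$. Because $(\bar\rho, \bar u)$ are time-independent smooth reference functions with $\bar u_x, \bar\rho_x$ compactly supported in $[-L_0, L_0]$ by \eqref{eq:referencefunctions}, integration over $\R$ kills the conservative fluxes by the end-state conditions, and integration by parts on the viscous term produces $-\eps\int u_x^2\,dx$ together with a cross term $\eps\int \bar u_x u_x\,dx$. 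The result is an identity of the form
\begin{equation*}
\tfrac{d}{dt} E[\rho,u](t) + \eps \int_\R u_x^2 \, dx = R(t),
\end{equation*}
where $R(t)$ collects the cross term plus finitely many contributions, each either supported on $[-L_0,L_0]$ or linear in $\bar u_x \in C_c^\infty$.

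The next step is to bound $|R(t)| \le C(1 + E[\rho,u](t)) + \tfrac{\eps}{2}\int u_x^2\,dx$. The quadratic kinetic pieces $\bar u_x \rho (u - \bar u)^2$ are directly controlled by $\|\bar u_x\|_\infty E[\rho,u](t)$. The relative pressure contributions are controlled using the pointwise equivalence
\begin{equation*}
|p(\rho) - p(\bar\rho) - p'(\bar\rho)(\rho - \bar\rho)| \leq C\, e^*(\rho, \bar\rho),
\end{equation*}
valid for all $\rho \geq 0$ and $\bar\rho$ in the compact range $\bar\rho([-L_0,L_0]) \subset (0,\infty)$; this rests on the convexity of $\rho e(\rho)$ (equivalent via $(\rho e)''(\rho) = p'(\rho)/\rho$ to the strict hyperbolicity \eqref{ass:pressure3}) together with the fact that $e^*(\rho,\bar\rho) \sim \rho\log\rho$ at high density dominates the $O(\rho)$ growth of the relative pressure under \eqref{ass:pressure2}. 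Linear-in-$\rho$ source terms localized on $[-L_0,L_0]$ (coming from $\bar\rho_x$) are handled by Cauchy--Schwarz after noting $\int_{-L_0}^{L_0}\rho\,dx \le C(1 + E[\rho,u](t))$, a consequence of the coercivity of $e^*$ and boundedness of $\bar\rho$ on the reference interval. Finally, the viscous cross-term is absorbed via Young's inequality, $\eps|\bar u_x u_x| \leq \tfrac{\eps}{2} u_x^2 + \tfrac{\eps}{2} \bar u_x^2$, with the last term bounded uniformly in $\eps$ by a fixed constant depending on $\bar u$ alone.

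Combining these estimates gives the differential inequality
\begin{equation*}
\tfrac{d}{dt} E[\rho,u](t) + \tfrac{\eps}{2} \int_\R u_x^2 \, dx \leq C\bigl(1 + E[\rho,u](t)\bigr),
\end{equation*}
with $C$ depending only on $\bar\rho, \bar u$. Gronwall gives $\sup_{[0,T]} E[\rho,u](t) \le (E_0 + CT)e^{CT}$, and then integrating the inequality in time against this bound produces the claimed control on $\eps\int_0^T\!\int_\R u_x^2\,dx\,dt$. The main obstacle is bookkeeping rather than analysis: one must carefully identify every source term arising in $R(t)$ and verify it is controlled by the relative energy, with the delicate point being the relative-pressure-versus-relative-internal-energy comparison for the mixed-growth pressure law \eqref{ass:pressure1}--\eqref{ass:pressure3}. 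Fortunately, the high-density regime is benign here ($e^*$ grows strictly faster than the relative pressure because $p(\rho)=\rho$ is linear while $e(\rho)\sim\log\rho$), so the equivalence closes uniformly and the compact support of $\bar\rho_x, \bar u_x$ localizes every problematic term to a fixed bounded set.
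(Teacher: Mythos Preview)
Your proposal is correct and follows essentially the same relative-energy/Gronwall strategy as the paper. The only cosmetic difference is organizational: you multiply the momentum equation by $(u-\bar u)$ and treat $\partial_t e^*(\rho,\bar\rho)$ directly, whereas the paper multiplies by $\eta^*_m(\rho,m)=u$ to get $\eta^*_t+q^*_x=\eps u u_{xx}$ and then subtracts off $\int_\R\nabla\eta^*(\bar\rho,\bar m)\cdot(\rho_t,m_t)\,dx$ as a separate source term; both routes produce the same differential inequality $\tfrac{d}{dt}E+\tfrac{\eps}{2}\int u_x^2\le C(1+E)$ with the key input being that $\rho+p(\rho)$ (and hence the relative pressure) is dominated by $e^*(\rho,\bar\rho)$ on the compact support of $\bar\rho_x,\bar u_x$.
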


\begin{proof}
We calculate directly that
\beqas
\frac{d}{dt}E[\rho,u](t)=\frac{d}{dt}\int_\R\eta^*(\rho,m)\,dx-\frac{d}{dt}\int_\R\eta^*(\bar\rho,\bar m)\,dx-\int_\R\nabla\eta^*(\bar\rho,\bar m)\cdot(\rho_t,m_t)\,dx.
\eeqas
As the reference functions $\bar\rho$ and $\bar m$ are independent of $t$, it is clear that the second term on the right is zero. On the other hand, multiplying the first equation in \eqref{eq:NS} by $\eta^*_\rho(\rho,m)$ and the second equation by $\eta^*_m(\rho,m)$ and summing, we obtain
$$\eta^*(\rho,m)_t+q^*(\rho,m)_x=\eps\eta^*_m(\rho,m)u_{xx},$$
and hence, as $\eta^*_m(\rho,m)=u$, we integrate by parts to see
\beqas
\frac{d}{dt}E[\rho,u](t)=q^*(\rho_-,m_-)-q^*(\rho_+,m_+)-\eps\int_\R|u_x|^2\,dx-\int_\R\nabla\eta^*(\bar\rho,\bar m)\cdot(\rho_t,m_t)\,dx.
\eeqas
Using \eqref{eq:NS}, we bound the final term on the right by
\begingroup
\allowdisplaybreaks
\begin{align*}
\Big|\int_\R\nabla\eta^*(\bar\rho,\bar m)\cdot(\rho_t,m_t)\,dx\Big|=&\Big|-\int_\R\nabla\eta^*(\bar\rho,\bar m)\cdot(m_x,(\rho u^2+p(\rho))_x-\eps u_{xx})\,dx\Big|\\
\leq&\int_\R\big|(\nabla\eta^*(\bar\rho,\bar m))_x\cdot(m,\rho u^2+p(\rho)-\eps u_x)\big|\,dx\\
&+\big|\nabla\eta^*(\bar\rho,\bar m)\cdot (m,\rho u^2+p(\rho)-\eps u_x)\big|^{x=+\infty}_{x=-\infty}\big|\\
\leq&\,\frac{\eps}{2}\int_\R|u_x|^2\,dx+M\int_\R\rho|u-\bar u|^2\,dx+M\Big(1+\int_{-L_0}^{L_0}(\rho+p(\rho))\,dx\Big),
\end{align*}
\endgroup
as the functions $\bar\rho$ and $\bar m$ are constant outside the interval $[-L_0,L_0]$ and $u_x(t,\cdot)\in L^2(\R)$.

Finally, we recall that $(\rho+p(\rho))\leq Me^*(\rho,\bar\rho)$ to derive
\beqs
\frac{d}{dt}E[\rho,u](t)+\frac{\eps}{2}\int_\R|u_x|^2\,dx\leq M(E[\rho,u](t)+1),
\eeqs
and conclude by Gronwall's inequality.
\end{proof}

The second uniform estimate for the viscous solutions concerns the spatial derivative of the density and is a simple modification of the standard argument of \cite[Lemma 3.2]{ChenPerep1}, which is based on \cite{Kanel}, hence we omit the proof.
\begin{lemma} \label{lemma:densityderivativeestimate}
Let $(\rho_0,u_0)$ be initial data such that
$$\eps^2\int_\R\frac{|\rho_{0,x}(x)|^2}{\rho_0(x)^3}\,dx\leq E_1<\infty,$$
for a constant $E_1>0$ independent of $\eps$. Then, for any $T>0$, there exists a constant $M>0$, depending on $E_0,E_1,\bar\rho, \bar u, T$, but independent of $\eps>0$, such that
\beq
\eps^2\int_\R\frac{|\rho_x(T,x)|^2}{\rho(T,x)^3}\,dx+\eps\int_0^T\int_\R\frac{p'(\rho)}{\rho^2}|\rho_x|^2\,dx\,dt\leq M.
\eeq
\end{lemma}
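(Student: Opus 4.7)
The plan is a Kanel-type energy estimate for the quantity $\int_\R \eps^2\rho_x^2/\rho^3\,dx$, combining the continuity and momentum equations to extract the dissipation $\eps\,p'(\rho)\rho_x^2/\rho^2$. First, I would differentiate the continuity equation in the form $(\log\rho)_t + u(\log\rho)_x + u_x = 0$ in $x$ to obtain an evolution equation for $(\log\rho)_x$, in which $u_{xx}$ appears as a source. Multiplying by $2\eps^2\rho_x/\rho^2$ and integrating over $\R$, the transport-type contribution drops out after a single integration by parts, leaving
\begin{equation*}
\frac{d}{dt}\int_\R\frac{\eps^2\rho_x^2}{\rho^3}\,dx \;=\; -2\eps^2\int_\R\frac{\rho_x}{\rho^2}u_{xx}\,dx + (\text{spatial boundary terms at }\pm\infty).
\end{equation*}
All boundary terms at spatial infinity vanish by the asymptotic behaviour $(\rho,u)\to(\rho_\pm,u_\pm)$ guaranteed by Theorem~\ref{thm:Hoff} and the $L^2$-integrability of the derivatives.

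Next, I would substitute $\eps u_{xx} = \rho u_t + \rho u u_x + p'(\rho)\rho_x$ from the momentum equation. This immediately produces the desired dissipation $-2\eps\int p'(\rho)\rho_x^2/\rho^2\,dx$, alongside two nuisance contributions, $-2\eps\int(\rho_x/\rho)u_t\,dx$ and $-2\eps\int u u_x(\rho_x/\rho)\,dx$. The $u_t$ term is handled by integration by parts in time, writing $\int u_t(\log\rho)_x\,dx = \partial_t\!\int u(\log\rho)_x\,dx - \int u (\log\rho)_{xt}\,dx$ and re-inserting the identity for $(\log\rho)_{xt}$ obtained in the first step. Two further integrations by parts in $x$, on $\int u\,u_{xx}$ and $\int u^2(\log\rho)_{xx}$, yield a clean contribution $+2\eps\int u_x^2\,dx$ together with a cross term $+2\eps\int u u_x(\rho_x/\rho)\,dx$ that \emph{exactly} annihilates the remaining nuisance term.

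Integrating the resulting identity in time from $0$ to $T$, the dissipation and principal quantity sit on the left; the right-hand side consists of (i) the initial value, bounded by $E_1$ and $\int\rho_0 u_0^2\,dx$; (ii) $2\int_0^T\!\!\int\eps u_x^2\,dx\,dt$, bounded by Lemma~\ref{lemma:mainenergyestimate}; and (iii) two boundary-in-time contributions $2\eps\int u \rho_x/\rho\,dx$ evaluated at $t=0$ and $t=T$. Each such contribution is controlled by Young's inequality $|2\eps\,u\,\rho_x/\rho| \leq \tfrac12\eps^2\rho_x^2/\rho^3 + 2\rho u^2$, so that half of the principal quantity at $t=T$ is absorbed into the left-hand side while $\int\rho u^2\,dx$ is bounded uniformly by the energy estimate.

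The main obstacle is the exact cancellation identified in the second step: the cross term $u u_x(\rho_x/\rho)$ arising from the momentum-equation substitution must be eliminated by the precise contribution coming from $u(\log\rho)_{xt}$ after the integration by parts on $u^2(\log\rho)_{xx}$, and verifying this algebraic identity is the real content of the argument. A secondary point is that the linear-growth assumption \eqref{ass:pressure2} gives $p'(\rho)\equiv1$ for $\rho\geq1$, so the dissipation $\eps\rho_x^2/\rho^2$ at high densities is weaker than its polytropic analogue; since, however, this term sits on the favourable side of the inequality, no Gronwall iteration is required and the bound closes directly in terms of $E_0$, $E_1$, $\bar\rho$, $\bar u$ and $T$.
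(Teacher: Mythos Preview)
Your proposal is correct and is precisely the Kanel-type argument the paper invokes (the proof is omitted in the paper with a reference to \cite[Lemma 3.2]{ChenPerep1}, itself based on \cite{Kanel}). One minor point: in your final Young-inequality step you bound the boundary-in-time term by $\int_\R\rho u^2\,dx$, which is not directly controlled by the \emph{relative} energy of Lemma~\ref{lemma:mainenergyestimate} when the end-states $u_\pm$ are nonzero; you should instead split $u=(u-\bar u)+\bar u$, control the first piece by $\int_\R\rho|u-\bar u|^2\,dx\leq M$, and handle the compactly supported $\bar u_x$-contribution separately (or equivalently run the whole identity with $u-\bar u$ in place of $u$, absorbing the resulting lower-order terms into $M$).
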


The final estimate of this section provides us with the necessary higher integrability estimate for the density. Again, the standard argument may be found, for example, in \cite[Lemma 3.3]{ChenPerep1}.
\begin{lemma}\label{lemma:densityintegrability}
Let $E_0[\rho_0,u_0]\leq E_0<\infty$ with $E_0$ independent of $\eps$ and let $K\subset\R$ be compact. Then, for any $T>0$, there exists a constant $M=M(E_0,K,\bar\rho,\bar u,T)>0$, independent of $\eps>0$, such that
\beq
\int_0^T\int_K\rho p(\rho)\,dx\,dt\leq M.
\eeq
\end{lemma}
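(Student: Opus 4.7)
I use the classical ``potential'' test function technique to generate local $L^1$ bounds on $\rho p(\rho)$ (i.e.\ on $\rho^2$ in the isothermal regime $\rho \geq 1$). Fix nonnegative cutoffs $\omega_1, \omega_2\in C_c^\infty(\R)$, both bounded by $1$, with $\omega_1\equiv 1$ on $K$ and $\omega_2\equiv 1$ on $\supp\omega_1$. Define
$$W(t,x) := \omega_1(x)\int_{-\infty}^x \omega_2(y)\big(\rho(t,y)-\bar\rho(y)\big)\,dy,$$
which is compactly supported in $x$ with $\|W\|_\infty$ controlled uniformly in $\eps$ and $t\in[0,T]$ by Lemma \ref{lemma:mainenergyestimate}. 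A direct computation gives $W_x = \omega_1(\rho-\bar\rho) + \omega_1'\,Y$, where $Y := \int_{-\infty}^x \omega_2(\rho - \bar\rho)\,dy$, and $W_t = -\omega_1\rho u + \omega_1\int_{-\infty}^x \omega_2'\rho u\,dy$ via the continuity equation together with $\omega_1\omega_2 = \omega_1$ on $\supp\omega_1$.

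\textbf{Core identity and cancellation.} Multiply the momentum equation by $W$ and integrate over $[0,T]\times\R$. Spatial boundary terms vanish by the compact support of $W$. Integrating by parts in $t$ and $x$, one finds that the $-\iint\rho u\,W_t$ contribution produces $+\iint\omega_1\rho^2 u^2$, while $-\iint(\rho u^2+p(\rho))W_x$ produces $-\iint\omega_1\rho^2 u^2$; these cancel exactly, leaving the identity
$$\int_0^T\!\!\int_\R \omega_1\rho p(\rho)\,dx\,dt = \Big[\int_\R\rho u W\,dx\Big]_0^T + R + \eps\int_0^T\!\!\int_\R u_x W_x\,dx\,dt,$$
where $R$ collects the compactly supported remainder terms $\iint\omega_1\bar\rho(\rho u^2+p(\rho))$, $-\iint\omega_1'Y(\rho u^2+p(\rho))$, and $-\iint\omega_1\rho u\int_{-\infty}^x\omega_2'\rho u\,dy\,dx\,dt$. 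Since $\omega_1\geq 0$ and $\omega_1\equiv 1$ on $K$, the left-hand side dominates $\int_0^T\!\!\int_K\rho p(\rho)\,dx\,dt$. Each term in $R$ and the time-boundary term are bounded uniformly in $\eps$ via Cauchy-Schwarz combined with Lemma \ref{lemma:mainenergyestimate}, using the elementary bound $p(\rho)\leq C(1+e^*(\rho,\bar\rho))$ to control the pressure factors and the uniform $L^\infty$ bound on $Y$.

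\textbf{Main obstacle: the viscous term.} The delicate piece is $\eps\iint u_x W_x$. The $\eps\iint u_x\omega_1'Y$ contribution is $O(\sqrt{\eps})$ by Cauchy-Schwarz and Lemma \ref{lemma:mainenergyestimate}. For $\eps\iint u_x\omega_1(\rho-\bar\rho)$, Young's inequality gives
$$\eps\,\Big|\iint u_x\,\omega_1(\rho-\bar\rho)\Big| \leq \tfrac12\eps\iint u_x^2 + \tfrac12\eps\iint\omega_1^2(\rho-\bar\rho)^2.$$
The first term is bounded by Lemma \ref{lemma:mainenergyestimate}. For the second, assumption \eqref{ass:pressure2} gives $\rho p(\rho)=\rho^2$ on $\{\rho\geq 1\}$, so $\omega_1^2(\rho-\bar\rho)^2 \leq 2\omega_1\rho p(\rho) + C\omega_1$ pointwise, contributing at most $\eps\iint\omega_1\rho p(\rho) + C\eps$ to the right-hand side. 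Taking $\eps_0 < 1$, this final term is absorbed into the left-hand side of the identity, completing the proof.
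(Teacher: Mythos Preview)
Your proof is correct and follows essentially the same route as the standard argument the paper defers to in \cite[Lemma 3.3]{ChenPerep1}: multiply the momentum equation by a localized antiderivative of $\rho-\bar\rho$, exploit the exact cancellation of the $\omega_1\rho^2u^2$ terms coming from $-\rho u\,W_t$ and $-(\rho u^2)W_x$, and bound the compactly supported remainders via the relative energy. Your treatment of the viscous term, where you use that $\rho p(\rho)=\rho^2$ for $\rho\geq 1$ to absorb $\tfrac{\eps}{2}\int\omega_1^2(\rho-\bar\rho)^2$ back into the left-hand side, is a clean adaptation to the present pressure law; the only cosmetic point is that rather than ``taking $\eps_0<1$'' you should phrase this as absorbing $(1-\eps)^{-1}$ for $\eps<1$, which is harmless.
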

\subsection{Higher integrability for the velocity}\label{subsec:3.2}
To obtain the necessary higher integrability of the velocity, we carefully construct an entropy pair $(\hat{\eta},\hat{q})$ with $\hat{q}\geq M^{-1}\rho|u|^3$ up to a controlled remainder. It was observed in \cite{LionsPerthameTadmor} that this could be done for the polytropic gas. Here, we must handle the estimates carefully in the high density regime using the kernels constructed in \S\ref{sec:entropy}, Theorem \ref{thm:entropykernelexpansionshighdensity}.
\begin{lemma}\label{lemma:sharpentropybounds}
Let $\hat{\psi}(s)=\half s|s|$. Then the associated entropy pair $(\hat{\eta},\hat{q})$ satisfies the following bounds for all $\rho\geq \rho_*$, where $\rho_*>1$ is fixed:
\beqa\label{ineq:highdensityentropyineqs}
&|\hat{\eta}(\rho,m)|\leq M\eta^*(\rho,m), \qquad &&\hat{q}(\rho,m)\geq M^{-1}\rho |u|^3-M\big(\rho|u|^2+\rho + \rho(\log\rho)^{4}\big),\\
&|\hat{\eta}_m(\rho,m)|\leq M(|u|+\sqrt{\log\rho}),\qquad &&|\rho\hat{\eta}_{mm}(\rho,m)|\leq M,
\eeqa
and, if we consider the function $\hat{\eta}_m(\rho,m)$ as a function of $\rho$ and $u$, the partial derivatives may be bounded by
\beq
|\hat{\eta}_{mu}(\rho,\rho u)|\leq M\frac{1}{\sqrt{\log\rho}},\qquad |\hat{\eta}_{m\rho}(\rho,\rho u)|\leq M\frac{1}{\rho\log\rho}.
\eeq
Moreover, on the complement region $\rho\leq \rho_*$, we have
\beqa\label{ineq:lowdensityentropyineqs}
&|\hat{\eta}(\rho,m)|\leq M\eta^*(\rho,m), \qquad &&\hat{q}(\rho,m)\geq M^{-1}\big(\rho |u|^3+\rho^{\ga+\th}\big)-M\big(\rho|u|^2+\rho^\ga\big),\\
&|\hat{\eta}_m(\rho,m)|\leq M(|u|+\rho^\th),\qquad &&|\rho\hat{\eta}_{mm}(\rho,m)|\leq M,\\
&|\hat{\eta}_{mu}(\rho,\rho u)|\leq M,\qquad &&|\hat{\eta}_{m\rho}(\rho,\rho u)|\leq M\rho^{\th-1},
\eeqa
where in the last line we consider the function $\hat{\eta}_m(\rho,m)$ as a function of $\rho$ and $u$.
Finally, for all $\rho>0$, 
\beq\label{eq:estar}
\rho \big| \hat{\eta}_m(\rho,0)-\hat{\eta}_m(\rho_-,0) \big|^2\leq Me^*(\rho,\bar\rho).
\eeq
\end{lemma}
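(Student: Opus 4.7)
The pair $(\hat\eta,\hat q)$ is given explicitly by $\hat\eta(\rho,u)=\int_\R\hat\psi(s)\chi(\rho,u-s)\,ds$ and $\hat q(\rho,u)=\int_\R\hat\psi(s)\sigma(\rho,u,s)\,ds$. The bounds are to be verified by direct computation, split naturally into the regimes $\rho\leq\rho_*$ and $\rho\geq\rho_*$ using, respectively, the polytropic $G_\la$-expansions \eqref{eq:entropyexpansion}--\eqref{eq:fluxexpansion} and the isothermal representation of Theorem \ref{thm:entropykernelexpansionshighdensity}.

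In the low-density regime, I would substitute $s=u+k(\rho)\tau$ in each integral, reducing every quantity to an integral of $\hat\psi(u+k(\rho)\tau)$ against $[1-\tau^2]_+^{\la+j}$ over $\tau\in[-1,1]$. This produces explicit polynomial-in-$u$ formulae with coefficients in $k(\rho)\sim\rho^\th$ and $a_\sharp,a_\flat,b_\sharp,b_\flat$. The positivity $\hat q\geq M^{-1}(\rho|u|^3+\rho^{\ga+\th})$ then follows from isolating the leading $(s-u)b_\sharp G_\la$ piece of $\sigma-u\chi$ together with Proposition \ref{prop:coefficient}, and the derivative bounds in \eqref{ineq:lowdensityentropyineqs} follow by differentiating the same expressions in $\rho$ and $u$ using $k'(\rho)\sim\rho^{\th-1}$. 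This part of the proof mirrors \cite{LionsPerthameTadmor}, adapted as in \cite[Lemma 3.4]{ChenPerep1} to the approximate $\ga$-law.

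In the high-density regime, I would use Theorem \ref{thm:entropykernelexpansionshighdensity} to write $\hat\eta(\rho,\cdot)=\hat\eta_\rho(1,\cdot)\ast\chi^\sharp(\rho,\cdot)+\hat\eta(1,\cdot)\ast\chi^\flat(\rho,\cdot)$, and similarly for $\hat q-u\hat\eta$ via the flux kernels $h^\sharp,h^\flat$ from Theorem \ref{thm:isothermalfluxkernels}. The base-state factors at $\rho=1$ are compactly supported, H\"older continuous functions already controlled by the low-density analysis, so the task reduces to estimating the isothermal kernels themselves. Using the Laplace asymptotic $I_\nu(z)\sim e^z/\sqrt{2\pi z}$, I expect to establish
\[\int_\R\chi^\sharp(\rho,w)\,dw\sim\rho,\qquad \|\chi^\sharp(\rho,\cdot)\|_{L^\infty}\sim\frac{\rho}{\sqrt{\log\rho}},\qquad \int_\R w^2\chi^\sharp(\rho,w)\,dw\sim\rho\log\rho,\]
the last via Laplace's method after substituting $w=\log\rho\cos\varphi$ (the critical point $\varphi=\pi/2$ is non-stationary due to the $\sin\varphi$ Jacobian), together with parallel estimates for $\chi^\flat$ in which the Dirac part of mass $\sqrt\rho$ must be tracked separately. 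Cauchy--Schwarz then produces the $\sqrt{\log\rho}$ weight in $|\hat\eta_m|$, and $|\hat\eta|\leq M\eta^*$ follows from $\int s^2\chi(\rho,s)\,ds\leq C\rho\log\rho\leq C\rho e(\rho)$. The cubic positivity $\hat q\geq M^{-1}\rho|u|^3$ traces back to the $\sgn$-type and Dirac contributions to $h^\flat$, which upon convolution against $\hat\psi(s)=\frac{1}{2}s|s|$ produce an honest $|u|^3$ contribution, while the error $\rho(\log\rho)^4$ comes from the worst-case $L^\infty$ bound on the $I_1$-part of $\chi^\flat$.

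The main technical obstacle I anticipate is extracting the sharp decay rates in the mixed derivatives $\hat\eta_{mu}$ and $\hat\eta_{m\rho}$: these require exploiting cancellations between the $\chi^\sharp$ and $\chi^\flat$ contributions, for instance via the identity $\chi^\flat=\chi^\sharp_R-\chi^\sharp$ from Theorem \ref{thm:isothermalentropykernels}, rather than the crude $L^1/L^\infty$ bounds on each kernel in isolation. Finally, \eqref{eq:estar} reduces, by Galilean invariance together with the oddness of $\hat\psi$ and the evenness of $\chi(\rho,\cdot)$ in its second argument, to the observation that $\hat\eta_m(\rho,0)=\frac{1}{\rho}\int_\R|s|\chi(\rho,s)\,ds$ is a function of $\rho$ alone. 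Cauchy--Schwarz then gives $\rho|\hat\eta_m(\rho,0)|^2\leq \int_\R s^2\chi(\rho,s)\,ds\leq Ce^*(\rho,\bar\rho)$, the last comparison holding since both sides grow like $\rho\log\rho$ in the high-density regime while matching the polytropic scaling at low density.
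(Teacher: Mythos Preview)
Your overall strategy matches the paper's: low density via the $G_\la$-expansions as in \cite{LionsPerthameTadmor}, high density via the isothermal convolution of Theorem~\ref{thm:entropykernelexpansionshighdensity}. One methodological difference is worth noting: rather than Laplace asymptotics, the paper evaluates the Bessel integrals \emph{exactly} after the angular substitution, using
\[
\int_0^{\pi/2}I_0\Big(\frac{\log\rho}{2}\cos\theta\Big)\cos\theta\,d\theta=\frac{\rho-1}{\sqrt\rho\,\log\rho},\qquad
\int_0^{\pi/2}I_1\Big(\frac{\log\rho}{2}\cos\theta\Big)\,d\theta=\frac{\rho-2\sqrt\rho+1}{\sqrt\rho\,\log\rho}
\]
from \cite{GR}. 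These closed forms make the coefficient of $\rho|u|^3$ explicit and manifestly positive, whereas your asymptotic route would still require you to determine the sign of a combination of base-state moments.

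There is, however, a genuine gap in where you locate the cubic term. The paper writes $\hat q=u\hat\eta+\hat h$ and observes (via \eqref{eq:waveQ}) that $\hat h$ is itself an isothermal entropy whose data at $\rho=1$ have only quadratic growth in $u$; consequently $|\hat h|\leq M(\rho|u|^2+\rho\log\rho)$, and the cubic \emph{cannot} come from $\hat h$ or from any piece of $h^\flat$. It comes entirely from $u\hat\eta$. Concretely, the paper decomposes $\hat\eta=K_1+K_2+K_3$ (the $I_0$-bulk, Dirac, and $I_1$-bulk contributions of $\chi^\sharp,\chi^\flat$), expands $u\hat\eta=uJ_1(\rho)+u^2J_2(\rho)+u^3J_3(\rho)$ for $|u|\geq 2k(1)$, and then uses the exact integrals above together with the moment identity \eqref{eq:chiintegralidentity} to obtain $J_3(\rho)\geq M_1\rho$ once $|u|\geq\log\rho+k(1)$. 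Your proposed source---the Dirac and $\sgn$-type parts of $h^\flat$---carries only a prefactor $\sqrt\rho$ on its own and in any case sits inside a quantity that is globally at most quadratic in $u$. Separately, your argument for \eqref{eq:estar} bounds $\rho|\hat\eta_m(\rho,0)|^2$ rather than the difference $\rho|\hat\eta_m(\rho,0)-\hat\eta_m(\rho_-,0)|^2$, and so does not capture the vanishing at $\rho=\rho_-$ needed to match $e^*(\rho,\bar\rho)$ there.
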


\begin{proof}
The bounds for the low density region are standard, following as in \cite{LionsPerthameTadmor}. Indeed, in this region, we use the expansions \eqref{eq:entropyexpansion} and \eqref{eq:fluxexpansion} to obtain \eqref{ineq:lowdensityentropyineqs} as in \cite{LionsPerthameTadmor} for the leading order terms, with bounded remainders.

We therefore focus on the region $\rho\geq\rho_*>1$.
We begin by recalling that the entropy flux may be decomposed as
$$\hat{q}(\rho,m)=u\hat{\eta}(\rho,m)+\hat{h}(\rho,m),$$
where, by \eqref{eq:waveQ}, $\hat{h}(\rho,m)$ is also an entropy generated by a test function of quadratic growth.

Examining first the entropy $\hat{\eta}(\rho,m)$, we have that
\beqas
\hat{\eta}(\rho,m)=&\int_\R\chi(\rho,u-s)\half s|s|\,ds\\
=&\int_\R\big(\chi_\rho(1,u-s)*\chi^\sharp(\rho,u-s)+\chi(1,u-s)*\chi^\flat(\rho,u-s)\big)\half s|s|\,ds.
\eeqas
We recall the expansions for $\chi^\sharp$ and $\chi^\flat$ of Theorem \ref{thm:entropykernelexpansionshighdensity} and decompose $\hat{\eta}$ into three terms as
\begin{equation*}
\hat{\eta}(\rho,m) = K_1(\rho,m) + K_2(\rho,m) + K_3(\rho,m),
\end{equation*}
where
\begin{equation*}
\begin{aligned}
K_1(\rho,m) &= \int_{\mathbb{R}}\int_{\mathbb{R}} \frac{\sqrt{\rho}}{4}I_0\Big( \frac{\sqrt{\log \rho^2 - (u-s-t)^2}}{2} \Big) \mathds{1}_{\vert u-s-t\vert <\log\rho} \chi_\rho(1,t) s|s| \; dt \;ds, \\
K_2(\rho,m) &= \int_{\mathbb{R}}\frac{\sqrt{\rho}}{4}\big( \chi(1,u-s-\log\rho)+\chi(1,u-s+\log\rho) \big) s|s| \; ds,
\end{aligned}
\end{equation*}
and finally,
\begin{equation*}
\begin{aligned}
K_3(\rho,m)=\int_{\mathbb{R}}\int_{\mathbb{R}} \frac{\sqrt{\rho}}{8} \bigg[ &\frac{\log \rho}{\sqrt{\log \rho^2 - (u-s-t)^2}}  I_1 \Big( \frac{\sqrt{\log\rho^2 - (u-s-t)^2}}{2} \Big) \\
&- I_0\Big( \frac{\sqrt{\log\rho^2 - (u-s-t)^2}}{2} \Big) \bigg]  \mathds{1}_{\vert u-s -t\vert < \log \rho} \chi(1,t) s|s| \; dt \; ds.
\end{aligned}
\end{equation*}
We therefore consider, inside the term $K_1(\rho,m)$, the following identity:
\begin{equation}\label{eq:K1 expand}
\begin{aligned}
\int_\R I_0 & \Big(\frac{\sqrt{(\log\rho)^2-(u-s-t)^2}}{2}\Big) s|s|\mathds{1}_{|u-s-t|<\log\rho}\,ds\\
&=\int_\R I_0 \Big(\frac{\sqrt{(\log\rho)^2-z^2}}{2}\Big)(z+u-t)|z+u-t|\mathds{1}_{|z|<\log\rho}\,dz\\
&=\int_{-\frac{\pi}{2}}^{\frac{\pi}{2}} I_0 \Big(\frac{\log\rho}{2}\cos\th\Big)\log\rho\cos\th(\log\rho\sin\th+u-t)|\log\rho\sin\th+u-t|\,d\th.
\end{aligned}
\end{equation}
Similarly, inside the term $K_3(\rho,m)$, we have
\begin{equation}\label{eq:K3 expand}
\begin{aligned}
\int_\mathbb{R} \frac{1}{\sqrt{\log\rho^2 - (u-s-t)^2}} & I_1 \Big( \frac{\sqrt{\log\rho^2 - (u-s-t)^2}}{2} \Big) s|s| \mathds{1}_{|u-s-t|<\log\rho} \; ds \\
= \int_{-\frac{\pi}{2}}^{\frac{\pi}{2}}& I_1 \Big( \frac{\log\rho}{2}\cos\theta \Big)(\log\rho\sin\theta + u-t) |\log\rho\sin\theta+u-t| \; d\theta.
\end{aligned}
\end{equation}
Returning to $\hat{q}(\rho,m)=u\hat{\eta}(\rho,m)+\hat{h}(\rho,m)$, we note that for $|u|\geq 2k(1)$, the expression $u-t$ has constant sign for $t\in\supp\,\chi(1,\cdot)$. We then expand $u\hat{\eta}(\rho,m)$ in powers of $u$ as 
$$u\hat{\eta}(\rho,m)= uJ_1(\rho)+u^2J_2(\rho)+u^3J_3(\rho).$$ Treating the highest order power of $u$ explicitly, \eqref{eq:K1 expand}--\eqref{eq:K3 expand} give us that, when $u\geq 2 k(1)$,
 \begin{equation}\label{eq:ucubedcont}
\begin{aligned}
u^3J_3(\rho)= \frac{\sqrt{\rho}u^3\log\rho}{2}\bigg(&\int_{-k(1)}^{k(1)}\big(\chi_\rho(1,t)-\half\chi(1,t)\big)\bigg(\int_0^{\frac{\pi}{2}}I_0\Big(\frac{\log\rho}{2}\cos\th\Big)\cos\th\,d\th\bigg)\,dt\\
&-\int_{u-\log\rho}^{k(1)}\big(\chi_\rho(1,t)-\half\chi(1,t)\big)\bigg(\int_{\arcsin\big(\frac{u-t}{\log\rho}\big)}^{\frac{\pi}{2}}I_0\Big(\frac{\log\rho}{2}\cos\th\Big)\cos\th\,d\th\bigg)\,dt\\
&+\half\int_{-k(1)}^{k(1)}\chi(1,t)\int_0^{\frac{\pi}{2}}I_1\Big(\frac{\log\rho}{2}\cos\th\Big)\,d\th\,dt\\
&-\half\int_{u-\log\rho}^{k(1)}\chi(1,t)\int_{\arcsin\big(\frac{u-t}{\log\rho}\big)}^{\frac{\pi}{2}}I_1\Big(\frac{\log\rho}{2}\cos\th\Big)\,d\th\,dt\bigg),
\end{aligned}
\end{equation}
where the second and fourth lines of this expression vanish identically if $u\geq \log\rho+k(1)$, allowing exact calculation of the integrals (\textit{cf.}~\cite{GR}):
\beqas
\int_0^{\frac{\pi}{2}}I_0\Big(\frac{\log\rho}{2}\cos\th\Big)\cos\th\,d\th=\frac{\rho-1}{\sqrt{\rho}\log\rho} \quad \text{ and } \quad \int_0^{\frac{\pi}{2}}I_1\Big(\frac{\log\rho}{2}\cos\th\Big)\,d\th=\frac{\rho-2\sqrt{\rho}+1}{\sqrt{\rho}\log\rho}.
\eeqas
Thus we obtain a lower bound in this region of 
 $$u^3J_3(\rho)\geq M_1\rho|u|^3,$$ where we have recalled from \eqref{eq:chiintegralidentity} that $\int\chi(1,t)\,dt=\int\chi_\rho(1,t)\,dt$. Analogous calculations show that this lower bound also holds when $u \leq -2k(1)$ and $u \leq -\log\rho-k(1)$. On the other hand, if $|u|< \log\rho+k(1)$, a simple estimate for \eqref{eq:ucubedcont} gives
 \begin{equation*}
 \begin{aligned}
 |u^3J_3(\rho)| &\leq M\big(\rho\log\rho (1+(\log\rho)^2)|u|\big).
 \end{aligned}
 \end{equation*}
Similar computations show that $|uJ_1(\rho)|$ and $|u^2J_2(\rho)|$ are dominated by $(\rho|u|^2 + \rho + \rho(\log\rho)^{4})$. In total, we obtain
\begin{equation*}
u\hat{\eta}(\rho,m) \geq M^{-1} \rho |u|^3 - M \left( \rho |u|^2 + \rho + \rho(\log\rho)^{4} \right).
\end{equation*}
On the other hand, the same representation in terms of $K_1$, $K_2$, $K_3$ gives the desired estimate on the entropy $\hat\eta$ and the remainder $\hat{h}$ as $\hat\eta$, $\hat{h}$ are both generated by functions of quadratic growth:
$$|\hat{\eta}(\rho,m)|+|\hat{h}(\rho,m)|\leq M(\rho|u|^2 +\rho\log\rho)\leq M\eta^*(\rho,m).$$
For the sake of concision, we omit the proofs of the derivative bounds, as the calculations are lengthy and technical, relying on the decompositions and estimates used above.
\end{proof}

As advertised at the beginning of this subsection, we use this construction to obtain the higher integrability of the velocity. Before continuing, we observe that the lower bound on $\hat{q}$ includes the term $-\rho(\log\rho)^{4}$. We make no claim that this is an optimal bound. However, as we seek only to bound $\hat{q}(\rho,m)$ locally, the local integrability of the quantity $\rho p(\rho)$ is more than sufficient to control this extra term.

In order to make the uniform estimate, we modify the entropy pair $(\hat{\eta},\hat{q})$ suitably to allow for integration in space. We define
\begin{equation*}
\check{\eta}(\rho,m):=\hat{\eta}(\rho,m-\rho u_-) \qquad \text{and} \qquad \check{q}(\rho,m):=\hat{q}(\rho,m-\rho u_-)+u_-\hat{\eta}(\rho,m-\rho u_-).
\end{equation*}
By Taylor expanding $\hat{\eta}(\rho,m)$ in $m$ around $m=0$ and using $|\hat{\eta}_{mm}|\leq M\rho^{-1}$ from Lemma \ref{lemma:sharpentropybounds}, we expand $\check{\eta}(\rho,m)$ as 
\begin{equation}\label{eq:taylor}
\check{\eta}(\rho,m)=\hat{\eta}_m(\rho,0)\rho(u-u_-)+\check{r}(\rho,m),
\end{equation}
where the remainder $\check{r}$ behaves according to the bound $$|\check{r}(\rho,m)|\leq M\rho|u-u_-|^2.$$
With this control over the modified entropy pair $(\check{\eta},\check{q})$, we prove higher integrability of the velocity.

\begin{lemma}\label{lemma:velocityintegrability}
Suppose in addition to the assumptions of Lemmas \ref{lemma:mainenergyestimate}--\ref{lemma:densityintegrability} that 
$$\int_\R\rho_0(x)|u_0(x)-\bar u(x)|\,dx\leq M_0<\infty,$$
where $M_0$ is independent of $\eps$. Then, for any compact subset $K\subset\R$, there exists a constant $M>0$, depending on $K$ but not on $\eps$, such that
\beq
\int_0^T\int_K\rho|u|^3\,dx\,dt\leq M.
\eeq
\end{lemma}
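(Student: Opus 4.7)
The argument is based on testing the entropy identity for smooth Navier--Stokes solutions,
\begin{equation*}
\partial_t \check\eta(\rho, \rho u) + \partial_x \check q(\rho, \rho u) = \eps\, \check\eta_m(\rho, \rho u)\, u_{xx},
\end{equation*}
against a smooth, bounded, non-decreasing cutoff $w:\R\to\R$ with $w_x\geq c\mathbf{1}_K\geq 0$, $w\equiv 0$ for $x\ll -L_0$, and $w\equiv w_\infty$ for $x\gg L_0$. After integrating over $[0,T]\times\R$ and performing two integrations by parts in $x$, the boundary piece at $x=-\infty$ vanishes (since $w=0$ there), while at $x=+\infty$ only $w_\infty T\,\check q(\rho_+,\rho_+ u_+)$ survives (the viscous boundary term vanishing because $\eps u_x$ decays). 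This yields
\begin{equation*}
\int_0^T\!\!\int_\R w_x \check q\, dx\,dt = \int_\R w\big[\check\eta(T,\cdot) - \check\eta(0,\cdot)\big] dx + w_\infty T\, \check q(\rho_+,\rho_+ u_+) + \int_0^T\!\!\int_\R \big(w_x \check\eta_m + w(\check\eta_m)_x\big)\eps u_x\, dx\,dt.
\end{equation*}

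On the left, the lower bound on $\hat q$ from Lemma~\ref{lemma:sharpentropybounds} together with $|u|^3\leq 4(|u-u_-|^3+|u_-|^3)$ gives
\begin{equation*}
\int_0^T\!\!\int_K \rho|u|^3\, dx\,dt \leq M\int_0^T\!\!\int_\R w_x \check q\, dx\,dt + M\int_0^T\!\!\int_K \big(\rho|u-u_-|^2 + \rho + \rho(\log\rho)^4 + \rho^{\ga+\th}\big)\, dx\,dt,
\end{equation*}
and the error terms are locally integrable by Lemmas~\ref{lemma:mainenergyestimate}--\ref{lemma:densityintegrability}; in particular $\rho(\log\rho)^4\leq C + C\rho p(\rho)$ for $\rho$ large by \eqref{ass:pressure2}.

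It then remains to bound each term on the right uniformly in $\eps$. The boundary contribution is a fixed constant, and the $\check\eta$ integrals are handled via the Taylor expansion \eqref{eq:taylor}: the quadratic remainder gives $M\int w\rho|u-u_-|^2\, dx$, bounded by the relative energy of Lemma~\ref{lemma:mainenergyestimate}, while the linear term $\hat\eta_m(\rho,0)\rho(u-u_-)$ is controlled by the $L^1$ hypothesis, the growth bound $|\hat\eta_m(\rho,0)|\leq M(\rho^\th+\sqrt{\log\rho}+1)$, and \eqref{eq:estar}. The genuinely delicate piece is the viscous term. Splitting $(\check\eta_m)_x = \partial_\rho\check\eta_m\,\rho_x + \partial_u\check\eta_m\,u_x$ (treating $\check\eta_m$ as a function of $(\rho,u)$), the uniform bound $|\partial_u\check\eta_m|\leq M$ (from combining the two regimes in Lemma~\ref{lemma:sharpentropybounds}) reduces the $u_x^2$ contribution to $M\eps\int w|u_x|^2$, finite by Lemma~\ref{lemma:mainenergyestimate}, while the cross term is estimated by Cauchy--Schwarz as
\begin{equation*}
\eps\int_0^T\!\!\int w\, |\partial_\rho\check\eta_m|\,|\rho_x|\,|u_x|\,dx\,dt \leq \bigg(\eps\!\int_0^T\!\!\int \tfrac{p'(\rho)}{\rho^2}|\rho_x|^2\,dx\,dt\bigg)^{\!1/2}\bigg(\eps\!\int_0^T\!\!\int w^2\tfrac{\rho^2(\partial_\rho\check\eta_m)^2}{p'(\rho)}|u_x|^2\,dx\,dt\bigg)^{\!1/2}.
\end{equation*}
Lemma~\ref{lemma:densityderivativeestimate} bounds the first factor, and the sharp bounds on $\partial_\rho\check\eta_m$ from Lemma~\ref{lemma:sharpentropybounds} yield $\rho^2(\partial_\rho\check\eta_m)^2/p'(\rho)\leq M$ in both density ranges (using $2\th-\ga+1=0$ at low density, and $p'(\rho)=1$ at high density), so the second factor is absorbed by $\eps\int|u_x|^2\leq M$. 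The term $\int w_x\eps\check\eta_m u_x$ is handled analogously using $|\check\eta_m|\leq M(|u|+\sqrt{\log\rho}+1)$ combined with Cauchy--Schwarz and the energy estimate. The hardest point of the argument is precisely this compatibility between the sharp derivative bounds of Lemma~\ref{lemma:sharpentropybounds} and Lemma~\ref{lemma:densityderivativeestimate}, which the isothermal-kernel construction of $(\check\eta,\check q)$ was tailored to guarantee.
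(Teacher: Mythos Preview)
Your proposal has a genuine gap in the choice of weight $w$. You take $w$ non-decreasing with $w\equiv 0$ at $-\infty$ and $w\equiv w_\infty>0$ at $+\infty$, but the modified entropy $\check\eta(\rho,m)=\hat\eta(\rho,m-\rho u_-)$ was built specifically to vanish at the \emph{left} end-state: since $\hat\psi(s)=\tfrac12 s|s|$ is odd and $\chi(\rho,\cdot)$ is even, one has $\hat\eta(\rho,0)=0$, hence $\check\eta(\rho_-,m_-)=0$ but $\check\eta(\rho_+,m_+)=\hat\eta(\rho_+,\rho_+(u_+-u_-))\neq 0$ whenever $u_+\neq u_-$. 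Consequently $\int_\R w\,\check\eta(\rho,m)(t,\cdot)\,dx$ diverges for generic end-states, and your basic identity is not even well-defined. The paper's approach is equivalent to the \emph{opposite} orientation: integrating first over $(-\infty,x)$ and then over $x\in K$ amounts to testing against a weight that is constant at $-\infty$ and vanishes at $+\infty$, so that the tail of $w$ is killed precisely where $\check\eta$ fails to decay. If you flip your $w$ accordingly (non-increasing, $w\equiv w_{-\infty}>0$ at $-\infty$, $w\equiv 0$ at $+\infty$, $-w_x\ge c\,\mathbf 1_K$), the scheme becomes sound.

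There is a second, smaller omission: even with the correct orientation, the linear piece $\hat\eta_m(\rho_-,0)\int w\,\rho(u-u_-)\,dx$ at time $T$ cannot be bounded directly by the $L^1$ hypothesis, which concerns only the \emph{initial} data. One must propagate it by integrating the momentum equation (minus $u_-$ times the continuity equation) over $(-\infty,x)\times(0,t)$, which expresses $\int_{-\infty}^x\rho(u-u_-)(t,y)\,dy$ as the initial integral plus the flux $\int_0^t(\rho u^2+p-p(\rho_-)-\rho u u_-)(\tau,x)\,d\tau$ and a viscous remainder $\eps\int_0^t u_x(\tau,x)\,d\tau$; after integrating over $x\in K$, these are controlled by Lemmas~\ref{lemma:mainenergyestimate} and~\ref{lemma:densityintegrability}. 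This is exactly where the compactness of $K$ is used, and your sketch skips it.
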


\begin{proof}
Testing the first equation of \eqref{eq:NS} against $\check{\eta}_\rho$ and the second equation with $\check{\eta}_m$, summing and integrating, we get
\begin{equation*}
\int_0^T \int_{-\infty}^x \big( \check{\eta}(\rho,m)_t + \check{q}(\rho,m)_y - \varepsilon u_{yy}\check{\eta}_m(\rho,m) \big) \, dy \, dt = 0.
\end{equation*}
Integrating by parts in the last term and then integrating in $x$ over a compact set $K \subset \mathbb{R}$ yields
\begin{equation}\label{eq:q equal}
\begin{aligned}
\int_0^T \int_K \check{q}(\rho,m) \, dx \, dt = &  - \int_K \int_{-\infty}^x \check{\eta}(\rho,m)(T,y) \, dy \, dx + \int_K \int_{-\infty}^x \check{\eta}(\rho_0,m_0) \, dy \, dx \\
& + \varepsilon\int_0^T \int_K u_x \check{\eta}_m \, dx \, dt - \varepsilon\int_0^T \int_K \int_{-\infty}^x \rho_y u_y \check{\eta}_{m \rho} \, dy \, dx \, dt\\
& - \varepsilon\int_0^T \int_K \int_{-\infty}^x \check{\eta}_{m u}|u_y|^2 \, dy \, dx \, dt + T\int_K \check{q}(\rho^-,m^-) \, dx,
\end{aligned}
\end{equation}
where we emphasise again that $\check{\eta}_{mu}(\rho,m) = \partial_u \check{\eta}_m(\rho,\rho u)$ and $\check{\eta}_{m\rho}(\rho,m) = \partial_\rho \check{\eta}_m(\rho,\rho u)$.

We begin with the third term on the right-hand side of \eqref{eq:q equal}. Using the estimate for $\hat{\eta}_m$ from Lemma \ref{lemma:sharpentropybounds}, we have
\begin{equation*}
\varepsilon\int_0^T \int_K  |u_x \check{\eta}_m | \, dx \, dt \leq \varepsilon M \int_0^T \int_K (|u| + 1 + \sqrt{\log\rho})|u_x| \, dx \, dt .
\end{equation*}
Applying the H\"{o}lder inequality to the right-hand side and appealing to the uniform bounds provided by Lemmas \ref{lemma:mainenergyestimate} and \ref{lemma:densityintegrability}, we see that the right-hand side is bounded by 
\begin{equation*}
M \left( 1 + \varepsilon \int_0^T \int_K |u|^2 \, dx \, dt \right),
\end{equation*}
which is also bounded independently of $\varepsilon$, by the argument of \cite[Proof of Lemma 3.4]{ChenPerep1}.

The estimates of Lemma \ref{lemma:densityderivativeestimate} show that the fourth term in \eqref{eq:q equal} is bounded uniformly in $\varepsilon$. The fifth term in \eqref{eq:q equal} is controlled by $\varepsilon M \int_0^T\int_\mathbb{R} |u_y|^2 \, dy \, dt$, also uniformly bounded, by Lemma \ref{lemma:mainenergyestimate}.

Combining these estimates and grouping the first two terms on the right-hand side of \eqref{eq:q equal}, we get
\begin{equation}\label{eq:3.14}
\begin{aligned}
\int_0^T \int_K \big(\hat{q}(\rho,m)+u_-\hat{\eta}(\rho,m-\rho u^-)\big) \, dx \, dt \leq M + 2\sup_{t \in [0,T]}\Big| \int_K \int_{-\infty}^x \check{\eta}(\rho(t,y),m(t,y)) \, dy \, dx \Big|.
\end{aligned}
\end{equation}
We observe that
\begin{equation*}
\Big| \int_{-\infty}^x \check{\eta}(\rho,m) \, dy \Big| \leq \Big| \int_{-\infty}^x \big( \check{\eta}(\rho,m) - \hat{\eta}_m(\rho,0)\rho (u-u^-) \big) \, dy \Big| + \Big| \int_{-\infty}^x \hat{\eta}_m(\rho,0)\rho(u-u^-) \, dy \Big|,
\end{equation*}
which, using the Taylor expansion \eqref{eq:taylor}, shows that the right-hand side of \eqref{eq:3.14} is bounded by
\begin{equation*}
\int_K\Big(\int_{-\infty}^x \rho |u-u^-|^2 \, dy +\int_{-\infty}^x \rho \left| \hat{\eta}_m(\rho,0)-\hat{\eta}_m(\rho^-,0) \right|^2 \, dy +|\check{\eta}_m(\rho^-,0)| \Big|\int_{-\infty}^x  \rho (u-u^-) \, dy \Big|\Big)dx.
\end{equation*}
By \eqref{eq:estar}, the integrand of the middle term is controlled by $Me^*(\rho,\bar{\rho})$. Using the fact that $K$ is compact and Lemma \ref{lemma:mainenergyestimate}, it follows that the first two terms are bounded independently of $\varepsilon$. For the third term, integrating \eqref{eq:NS} in space and time, we find
\begin{equation*}
\begin{aligned}
\int_K\Big|\int_{-\infty}^x \rho(t,y)(u(t,y)-u^-) \, dy \Big|\,dx=& \int_K\Big|\int_{-\infty}^x \rho_0(u_0-\bar{u}) \, dy + \int_{-\infty}^x \rho_0 (\bar{u}-u^-) \, dy \\
&-\int_0^t \left( \rho u^2 + p - p(\rho^-)   -\rho u u^- \right) \, d\tau + \varepsilon\int_0^t u_x \, d\tau\Big|\,dx,
\end{aligned}
\end{equation*}
which is uniformly bounded for any $t\in[0,T]$ by the assumption of the Lemma and the main energy estimate as $K$ is compact. Applying the lower bound for $\hat{q}(\rho,m)$ of \eqref{ineq:highdensityentropyineqs} in \eqref{eq:3.14}, we deduce the local uniform integrability of $\hat{q}(\rho,m)$. We now conclude the proof using \eqref{ineq:highdensityentropyineqs} and \eqref{ineq:lowdensityentropyineqs}.
\end{proof}

\begin{rmk}\label{rmk:assumptions}
As we have made successive assumptions in the statements of the uniform estimates for the solutions, we collect these assumptions here for future reference.
\begin{itemize}
\item The initial data must be of finite-energy:
$$\sup_\eps E[\rho^\eps_0,u^\eps_0]\leq E_0<\infty;$$
\item The initial density must satisfy a weighted derivative bound:
$$\sup_\eps \eps^2\int_\R\frac{|\rho^\eps_{0,x}(x)|^2}{\rho^\eps_0(x)^3}\,dx\leq E_1<\infty;$$
\item The relative total initial momentum should be finite:
$$\sup_\eps\int_\R\rho^\eps_0(x)|u^\eps_0(x)-\bar u(x)|\,dx\leq M_0<\infty.$$
\end{itemize}
All three of these conditions and the additional condition $\rho_0^\eps\geq c_0^\eps>0$ may be guaranteed by cutting off the initial data of the problem by $\max\{\rho_0,\eps^\half\}$ and then mollifying at a suitable scale.
\end{rmk}

\subsection{Entropies generated by compactly supported functions}
We collect here the properties of entropies generated from compactly supported functions for use in the reduction argument of \S\ref{sec:Youngmeasureframework}--\ref{sec:generalreduction}.
\begin{lemma}\label{lemma:compactentropybounds}
Let $\psi\in C^2_c(\R)$ be a compactly supported test function such that the support of $\psi$ is contained in an interval $[z_*,w_*]$. Then the corresponding entropy pair $(\eta^\psi,q^\psi)$ has support contained in the set:
$$\supp\,\eta^\psi,\,\supp\,q^\psi\subset\{(\rho,u):w(\rho,u)\geq z_*,\,z(\rho,u)\leq w_*\},$$
where $w(\rho,u)=u+k(\rho)$ and $z(\rho,u)=u-k(\rho)$ are the Riemann invariants. Moreover,
\begin{align}
&|\eta^\psi(\rho,m)|\leq
M_\psi\rho\min\{1,\frac{1}{\sqrt{\log(\rho+1)}}\}
\quad\text{ and }\quad
|q^\psi(\rho,m)|\leq M_\psi \rho,
\end{align}
and
\begin{align}
&|\eta^\psi_{m}(\rho,m)|+|\rho\eta^\psi_{mm}(\rho,m)|\leq M_\psi\min\{1,\frac{1}{\sqrt{\log(\rho+1)}}\};
\end{align}
 Considering $\eta^\psi_m$ as a function of $\rho$ and $u$,
\begin{equation}
\begin{aligned}
 |\eta^\psi_{mu}(\rho,\rho u)|\leq M_\psi\min\{1,\frac{1}{\sqrt{\log(\rho+1)}}\}, \qquad  |\rho\eta^\psi_{m\rho}(\rho,\rho u)|\leq M_\psi\min\{\rho^\th,\frac{1}{\log(\rho+1)}\},
\end{aligned}
\end{equation}
where, for example, $\eta^\psi_{mu}(\rho,\rho u)=\partial_u\eta^\psi_m(\rho,\rho u)$.
In particular, $|\eta^\psi_{m\rho}(\rho,\rho u)|\leq M_\psi\frac{\sqrt{p'(\rho)}}{\rho}.$
\end{lemma}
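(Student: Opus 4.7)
I begin with the support statement. Since the entropy kernel $\chi(\rho,u-s)$ and entropy flux kernel $\sigma(\rho,u,s)$ are supported in the characteristic cone $\{(u-s)^2\leq k(\rho)^2\}$ (\textit{cf.}~\cite{ChenLeFloch}), the integrals defining $\eta^\psi$ and $q^\psi$ can be nonzero only when $[z_*,w_*]\cap[u-k(\rho),u+k(\rho)]\neq\emptyset$, which is exactly $z(\rho,u)\leq w_*$ and $w(\rho,u)\geq z_*$.

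The pointwise estimates on $\eta^\psi$ and $q^\psi$ are obtained by splitting into the low-density regime $\rho\leq 1$ and the high-density regime $\rho\geq 1$. In the former I use the expansion \eqref{eq:entropyexpansion}--\eqref{eq:fluxexpansion} together with the scalings $k(\rho)\sim\rho^\th$, $a_\sharp(\rho)\sim\rho^{-\la\th+(1-\th)/2}$ and $\int G_\la(\rho,\cdot)\,ds\sim k(\rho)^{1/\th}\sim\rho$, reproducing the polytropic estimates of \cite{LionsPerthameTadmor}, with the H\"older remainders $g_1,g_2$ absorbed by their pointwise bounds. In the latter I use the isothermal decomposition of Theorem \ref{thm:entropykernelexpansionshighdensity} and the fact that $\psi\ast\chi_\rho(1,\cdot)$ and $\psi\ast\chi(1,\cdot)$ are compactly supported in $v=u-s$; the uniform asymptotic $I_\nu(x)\sim e^x/\sqrt{2\pi x}$ gives $\chi^\sharp(\rho,v)\sim\rho/\sqrt{\log\rho}$ on bounded $v$-sets, whence $|\eta^\psi|\leq M\rho/\sqrt{\log\rho}$, and similarly for $q^\psi$ via Theorem \ref{thm:isothermalfluxkernels}.

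The bounds on $\eta^\psi_m$, $\rho\eta^\psi_{mm}$ and $\eta^\psi_{mu}$ reduce to the previous estimate applied to derivatives of $\psi$. Indeed, since $\partial_m=\rho^{-1}\partial_u$ at fixed $\rho$ and $\chi_u(\rho,u-s)=-\partial_s\chi(\rho,u-s)$, integration by parts yields
\beqs
\eta^\psi_m(\rho,\rho u)=\rho^{-1}\eta^{\psi'}(\rho,\rho u),\qquad\eta^\psi_{mu}(\rho,\rho u)=\rho^{-1}\eta^{\psi''}(\rho,\rho u),\qquad\rho\eta^\psi_{mm}(\rho,\rho u)=\rho^{-1}\eta^{\psi''}(\rho,\rho u).
\eeqs
Thus the $\min\{1,1/\sqrt{\log(\rho+1)}\}$ bounds follow directly from the bound on $\eta^{\psi'}$ and $\eta^{\psi''}$ already proved, since $\supp\psi',\supp\psi''\subset\supp\psi$.

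The hard part is the bound on $\eta^\psi_{m\rho}$: a naive estimate from the formulas above only yields $1/\sqrt{\log(\rho+1)}$, and the sharper $1/\log(\rho+1)$ requires genuine cancellation. A chain-rule computation at fixed $u$ gives
\beqs
\rho\eta^\psi_{m\rho}(\rho,\rho u)=\rho^{-1}\int_\R\psi'(s)\bigl(\rho\chi_\rho-\chi\bigr)(\rho,u-s)\,ds,
\eeqs
and identity \eqref{eq:chiintegralidentity} shows $\int(\rho\chi_\rho-\chi)\,ds=0$. In the high-density regime the isothermal decomposition together with the relation $\chi^\flat(\rho,v)=\rho\chi^\sharp_\rho(\rho,v)-\chi^\sharp(\rho,v)$ (immediate from Theorem \ref{thm:isothermalentropykernels} and $\partial_R=\rho\partial_\rho$) expresses $\rho\chi_\rho-\chi$ purely through $\chi^\flat$ and $\chi^\sharp_{uu}$; the refined Bessel expansion $I_0(x)-I_1(x)=O(I_0(x)/x)$ shows that on bounded $v$-sets both of these are of order $\rho/(\log\rho)^{3/2}$, giving the claimed improvement after integrating against $\psi'$. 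In the low-density regime, a direct calculation from \eqref{eq:entropyexpansion} produces the $\rho^\th$ bound; taking the minimum yields the stated estimate. The ``in particular'' consequence then follows by comparing with $\sqrt{p'(\rho)}/\rho=k'(\rho)\sim\rho^{\th-1}$ for $\rho\leq 1$ and $\sim\rho^{-1}$ for $\rho\geq 1$.
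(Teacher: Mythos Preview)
Your approach is the same as the paper's: polytropic-type expansions \eqref{eq:entropyexpansion}--\eqref{eq:fluxexpansion} for $\rho\le 1$ (as in \cite{ChenPerep1}) and the isothermal representations of Theorem~\ref{thm:entropykernelexpansionshighdensity} for $\rho\ge 1$. In fact you supply more detail than the paper, which simply cites these ingredients and omits the calculations; your reduction $\eta^\psi_m=\rho^{-1}\eta^{\psi'}$, $\eta^\psi_{mu}=\rho\eta^\psi_{mm}=\rho^{-1}\eta^{\psi''}$ via integration by parts is a clean shortcut that the paper does not make explicit.

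One small point on the $\eta^\psi_{m\rho}$ estimate: your phrase ``on bounded $v$-sets'' is not quite sufficient as stated, since for $(\rho,u)$ near the edge of the support one has $u$ of order $\log\rho$, and the convolution window for $v=u-s$ then sits near $|v|=\log\rho$ rather than in a fixed bounded set. However this boundary regime is harmless: there the crude bound $|\chi^\flat_{\mathrm{cont}}(\rho,v)|\lesssim\sqrt{\rho}\log\rho$ (and $\sqrt{\rho}$ for the Dirac part) already gives, after the factor $\rho^{-1}$, a contribution of order $(\log\rho)/\sqrt{\rho}\ll 1/\log\rho$. The interior regime you analyse is where the genuine $I_0-I_1$ cancellation is needed and your estimate $\rho/(\log\rho)^{3/2}$ there is correct; since $\chi^\sharp$ is maximised at $v=0$, the same remark shows your $|\eta^\psi|$ bound is global without further work.
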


\begin{proof}
Working from the expansions \eqref{eq:entropyexpansion}--\eqref{eq:fluxexpansion}, the  bounds in the region $\rho\leq1$ all follow from the arguments in \cite[Lemma 2.1]{ChenPerep1}. For the high density region, the estimates follow from the new expansions of Theorem \ref{thm:entropykernelexpansionshighdensity}. For the sake of brevity, we omit the lengthy calculations.
\end{proof}
\subsection{Compactness of the entropy dissipation measures}
We now use the uniform estimates of \S\ref{subsec:3.1}--\S\ref{subsec:3.2} to obtain a compactness property for the entropy dissipation measures associated to weak entropies generated from compactly supported functions and the solutions of the Navier-Stokes equations \eqref{eq:NS}. This compactness, obtained in the negative order Sobolev space $W^{-1,q}_{loc}$, will be used in \S\ref{sec:Youngmeasureframework} to apply the theory of compensated compactness to our sequence of approximate solutions.
\begin{prop}\label{prop:W^-1,qcompactness}
Let $(\rho^\eps$, $u^\eps)$ be a sequence of solutions to the Navier-Stokes equations \eqref{eq:NS} satisfying the assumptions in Remark \ref{rmk:assumptions} and let $\psi\in C_c^2(\R)$ generate the weak entropy pair $(\eta^\psi,q^\psi)$. Then the weak entropy dissipation measures
\beq
\eta^\psi(\rho^\eps,\rho^\eps u^\eps)_t+q^\psi(\rho^\eps,\rho^\eps u^\eps)_x \text{ are (pre)-compact in }W^{-1,q}_{loc}(\R^2_+)
\eeq
for any $q\in(1,2)$.
\end{prop}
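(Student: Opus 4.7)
The plan is to derive the entropy identity
\beqs
\partial_t \eta^\psi(\rho^\eps,\rho^\eps u^\eps) + \partial_x q^\psi(\rho^\eps,\rho^\eps u^\eps) = \eps\,\eta^\psi_m(\rho^\eps,\rho^\eps u^\eps)\, u^\eps_{xx}
\eeqs
from the Navier--Stokes system \eqref{eq:NS} together with the entropy equation \eqref{eq:entropyequation}, and then to decompose the right-hand side as (divergence of a term vanishing in $L^2_{\mathrm{loc}}$) plus (terms bounded in $L^1_{\mathrm{loc}}$). Since $(t,x)$ ranges over a two-dimensional space, one has both the continuous embedding $H^{-1}_{\mathrm{loc}}(\R^2_+)\hookrightarrow W^{-1,q}_{\mathrm{loc}}(\R^2_+)$ for $q\leq 2$ and the \emph{compact} embedding
\beqs
L^1_{\mathrm{loc}}(\R^2_+)\hookrightarrow W^{-1,q}_{\mathrm{loc}}(\R^2_+),\qquad q\in(1,2),
\eeqs
dual to the compact Morrey embedding $W^{1,q'}_0\hookrightarrow C^0$ with conjugate exponent $q'>2$. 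The two parts of the decomposition will therefore lie in pre-compact subsets of $W^{-1,q}_{\mathrm{loc}}$.

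Viewing $\eta^\psi_m$ as a function of the physical variables $(\rho,u)$ to align with the bounds of Lemma \ref{lemma:compactentropybounds}, the product rule gives
\beqs
\eps\,\eta^\psi_m u^\eps_{xx} = \partial_x\!\left(\eps\,\eta^\psi_m u^\eps_x\right) - \eps\,\eta^\psi_{m\rho}(\rho^\eps,\rho^\eps u^\eps)\,\rho^\eps_x u^\eps_x - \eps\,\eta^\psi_{mu}(\rho^\eps,\rho^\eps u^\eps)\,|u^\eps_x|^2.
\eeqs
For the divergence term, the uniform bound $|\eta^\psi_m|\leq M_\psi$ from Lemma \ref{lemma:compactentropybounds} combined with Lemma \ref{lemma:mainenergyestimate} gives
$\|\eps\,\eta^\psi_m u^\eps_x\|_{L^2_{\mathrm{loc}}}\leq M_\psi\sqrt{\eps}\,\|\sqrt{\eps}\,u^\eps_x\|_{L^2}=O(\sqrt{\eps})\to 0$, so that $\partial_x(\eps\,\eta^\psi_m u^\eps_x)$ vanishes in $H^{-1}_{\mathrm{loc}}$. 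For the third term on the right, the bound $|\eta^\psi_{mu}|\leq M_\psi$ and Lemma \ref{lemma:mainenergyestimate} immediately produce a uniform $L^1_{\mathrm{loc}}$ bound.

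The genuinely delicate piece is the cross-term $\eps\,\eta^\psi_{m\rho}\rho^\eps_x u^\eps_x$. Here I would invoke the sharp pointwise bound $|\eta^\psi_{m\rho}(\rho,\rho u)|\leq M_\psi\sqrt{p'(\rho)}/\rho$ from the end of Lemma \ref{lemma:compactentropybounds}---itself a consequence of the new high-density representation in Theorem \ref{thm:entropykernelexpansionshighdensity}---so that Cauchy--Schwarz yields
\beqs
\int_0^T\!\!\int_K \eps\,|\eta^\psi_{m\rho}\rho^\eps_x u^\eps_x|\,dx\,dt \leq M_\psi \Bigl(\int_0^T\!\!\int_\R \eps\,\tfrac{p'(\rho^\eps)}{(\rho^\eps)^2}\,|\rho^\eps_x|^2\,dx\,dt\Bigr)^{\!\!1/2}\! \Bigl(\int_0^T\!\!\int_\R \eps\,|u^\eps_x|^2\,dx\,dt\Bigr)^{\!\!1/2},
\eeqs
both factors being uniformly bounded by Lemmas \ref{lemma:densityderivativeestimate} and \ref{lemma:mainenergyestimate}. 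Assembling the three estimates and invoking the embeddings from the first paragraph delivers the claim. The main obstacle is precisely this cross-term at high density: the classical polytropic-style estimate $|\eta^\psi_{m\rho}|\lesssim \rho^{\theta-1}$ would blow up as $\rho\to\infty$ under the linear pressure law \eqref{ass:pressure2}, so the entire argument rests on the new kernel representation that controls $\eta^\psi_{m\rho}$ by $\sqrt{p'(\rho)}/\rho$ uniformly in $\rho$.
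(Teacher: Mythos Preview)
Your argument is correct and follows the paper's proof essentially line for line: the same entropy identity, the same three-term decomposition of $\eps\,\eta^\psi_m u^\eps_{xx}$, the same use of Lemma~\ref{lemma:compactentropybounds} (in particular the bound $|\eta^\psi_{m\rho}|\leq M_\psi\sqrt{p'(\rho)}/\rho$) together with Lemmas~\ref{lemma:mainenergyestimate}--\ref{lemma:densityderivativeestimate}, and the same compact embedding of $L^1_{\mathrm{loc}}$ into $W^{-1,q}_{\mathrm{loc}}$ for $q<2$. One small expository quibble in your closing commentary: $\rho^{\theta-1}$ does not blow up for $\theta\in(0,1)$ --- the real issue is that it decays too slowly to be absorbed by the density-derivative estimate of Lemma~\ref{lemma:densityderivativeestimate} in the linear-pressure regime, where only $\eps\int |\rho_x|^2/\rho^2$ is controlled.
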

\begin{proof}
We write $m^\eps=\rho^\eps u^\eps$ throughout this proof. Multiplying the first equation in \eqref{eq:NS} by $\eta^\psi_\rho(\rho^\eps,m^\eps)$ and the second equation by $\eta^\psi_m(\rho^\eps,m^\eps)$ and summing, we obtain
\beqa\label{eq:NSentropydissipation}
\eta^\psi(\rho^\eps,m^\eps)_t+q^\psi(\rho^\eps,m^\eps)_x=&\,\eps\big(\eta_m^\psi(\rho^\eps,m^\eps)u_x^\eps\big)_x\\&-\eps \eta^\psi_{mu}(\rho^\eps,m^\eps)|u_x^\eps|^2-\eps \eta^\psi_{m\rho}(\rho^\eps,m^\eps)\rho_x^\eps u_x^\eps,
\eeqa
where we use the notation from Lemma \ref{lemma:compactentropybounds} for $\eta^\psi_{mu}$ and $\eta^\psi_{m\rho}$.

Applying the estimates of Lemma \ref{lemma:compactentropybounds}, we may estimate, for any compact set $K\subset\R$,
\beqas
\int_0^T\int_K|\eps \eta^\psi_{mu}(\rho^\eps,m^\eps)|u_x^\eps|^2|\,dx\,dt &\leq \eps M\int_0^T\int_K|u_x^\eps|^2\,dx\,dt\leq M,\\
\int_0^T\int_K|\eps \eta^\psi_{m\rho}(\rho^\eps,m^\eps)\rho_x^\eps u_x^\eps|\,dx\,dt &\leq  \big\| \sqrt{\eps}\eta^\psi_{m\rho}(\rho^\eps,m^\eps)\rho_x^\eps\big\|_{L^2}\| \sqrt{\eps}u_x^\eps\|_{L^2}
\leq \,M\Big\|\sqrt{\eps}\frac{\sqrt{p'(\rho^\eps)}}{\rho^\eps}\rho_x^\eps\Big\|_{L^2} \leq \,M,
\eeqas
where we have used the estimates of Lemma \ref{lemma:mainenergyestimate} and Lemma \ref{lemma:densityderivativeestimate}.
Thus
$$-\eps \eta^\psi_{mu}(\rho^\eps,m^\eps)|u_x^\eps|^2-\eps \eta^\psi_{m\rho}(\rho^\eps,m^\eps)\rho_x^\eps u_x^\eps$$
is uniformly bounded in $L^1_{loc}(\R^2_+)$ and hence is (pre)-compact in $W^{-1,q}_{loc}(\R^2_+)$ for any $q\in(1,2)$ by the standard Rellich-Kondrachov embedding.

For the final term, $\eps\big(\eta_m^\psi(\rho^\eps,m^\eps)u_x^\eps\big)_x$, we recall that $|\eta_m^\psi(\rho^\eps,m^\eps)|\leq M$, and hence
$$\|\eps\eta_m^\psi(\rho^\eps,m^\eps)u_x^\eps\|_{L^2}\leq M\sqrt{\eps}\to0,$$
i.e.~$\big(\eps\eta_m^\psi(\rho^\eps,m^\eps)u^\eps_x\big)_x\to0$ in $W^{-1,2}_{loc}(\R^2_+)$. Thus, considering the sum of this term with the terms previously considered, we obtain that
$$\eta^\psi(\rho^\eps,m^\eps)_t+q^\psi(\rho^\eps,m^\eps)_x \text{ is (pre)-compact in }W^{-1,q}_{loc}(\R^2_+).$$
\end{proof}
\section{Convergence to a Young measure solution}\label{sec:Youngmeasureframework}
We construct a measure-valued solution of the Cauchy problem \eqref{eq:NS}--\eqref{eq:Cauchydata} defined on a compactification of our phase space and apply the div-curl lemma to a family of weak entropy pairs to deduce that the measure-valued solution is constrained by the Tartar commutation relation.

To this end, we define some notation. First, we set
$$\H=\{(\rho,u)\in\R^2:\rho>0\}$$
and consider, as in \cite{LeFlochWestdickenberg}, the subset of the continuous functions on $\H$, 
\beq
\bar{C}(\H)=\left\{ \phi\in C(\bar\H) \middle\vert \begin{array}{l}
    \phi(\rho,u) \text{ is constant on }\{\rho=0\}\text{ and such that} \\
    \text{ the function }(\rho,u)\mapsto\lim_{s\to\infty}\phi(s\rho,su)\in C(\mathbb{S}^1\cap\bar\H)
  \end{array}\right\},
  \eeq
  where $\mathbb{S}^1$ is the unit circle. This space allows us to deal with both the difficulty at the vacuum and the difficulty at large densities.
  
  As $\bar{C}(\H)$ is a complete sub-ring of the continuous functions on $\H$ containing the constant functions, there exists a compactification $\overline{\mathcal{H}}$ of $\H$ such that $C(\overline{\mathcal{H}})\cong \bar{C}(\H)$, where $\cong$ denotes isometric isomorphism (see e.g. \cite[Proposition 1.5.3]{Roubicek}). The topology of $\overline{\mathcal{H}}$ is the weak-star topology induced by $C(\overline{\mathcal{H}})$, i.e.~a sequence $v_n\in\overline{\mathcal{H}}$ converges to $v\in\overline{\mathcal{H}}$ if and only if $\phi(v_n)\to\phi(v)$ for all $\phi\in C(\overline{\mathcal{H}})$. This topology is both separable and metrizable. Moreover, considering the induced topology on $\bar{\H}$, this topology has the obvious advantage of not distinguishing points in the vacuum set. We write $V$ for the weak-star closure of the set $\{\rho=0\}$ and set
  $$\mathcal{H}=\H\cup V.$$
  Applying the fundamental theorem of Young measures for maps into compact metric spaces given in \cite[Theorem 2.4]{AlbertiMueller}, we obtain a Young measure $\nu_{t,x}$ in the following way. Given a sequence of functions $(\rho^\eps, u^\eps):\R^2_+\to\overline{\mathcal{H}}$, there exists a subsequence generating $\nu_{t,x}\in\text{\rm Prob}(\overline{\mathcal{H}})$ for almost every $(t,x)$ in the sense that, for any $\phi\in C(\overline{\mathcal{H}})$, as $\eps\to0$,
  $$\phi(\rho^\eps,u^\eps)\stackrel{*}{\rightharpoonup} \int_{\overline{\mathcal{H}}}\phi(\rho,u)\,d\nu_{t,x} \text{ in }L^\infty(\R^2_+).$$
  Moreover, in $(\rho,m)$ coordinates, we have that $(\rho^\eps,m^\eps)\to(\rho,m)$ in measure (and hence almost everywhere up to subsequence) if and only if $\nu_{t,x}=\de_{(\rho(t,x),m(t,x))}$ for almost every $(t,x)\in\R^2_+$.

 \begin{prop}\label{prop:YMtestfunctions}
 Let $\nu_{t,x}$ be a Young measure generated by a sequence of solutions to \eqref{eq:NS} as $\eps\to0$ satisfying the assumptions of Remark \ref{rmk:assumptions}. Then $\nu_{t,x}$ has the following properties:\vspace{-1mm}
 \begin{itemize}
 \item[{\rm (i)}] For almost every $(t,x)\in\R^2_+$, the measure $\nu_{t,x}\in \text{{\rm Prob}}(\mathcal{H})$.\vspace{1mm}
 \item[{\rm (ii)}] $\nu_{t,x}$ satisfies the following higher integrability property:
 $$(t,x)\mapsto\int_\H \big(\rho p(\rho)+\rho|u|^3\big)\,d\nu_{t,x}\in L^1_{loc}(\R^2_+).$$
\item[{\rm (iii)}] The space of admissible test functions for the Young measure may be extended as follows:\\
 Let $\phi\in C(\bar\H)$ be a function vanishing on the set $\partial\H$. Suppose that there exists $a>0$ such that $\supp\,\phi\subset\{w(\rho,u)\geq -a,\;z(\rho,u)\leq a\}$. If, in addition, $\phi$ satisfies the growth bound 
 \beq\label{ass:growth}
 \lim_{\rho\to\infty}\frac{|\phi(\rho,u)|}{\rho^2}=0 \text{ uniformly for $u\in\R$,}\eeq then $\phi$ is integrable with respect to $\nu_{t,x}$ for almost all $(t,x)\in\R^2_+$ and
 $$\phi(\rho^\eps,u^\eps)\weakto\int_\H\phi\,d\nu_{t,x} \text{ in }L^1_{loc}(\R^2_+).$$
 \end{itemize}
 \end{prop}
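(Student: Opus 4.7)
The plan is to treat the three items in the order (i), (ii), (iii), since both (ii) and (iii) ultimately rest on the same bound on $\rho p(\rho)$ in the isothermal regime. In all three cases the unifying idea is to approximate an unbounded or discontinuous quantity by bounded continuous functions lying in $C(\overline{\mathcal{H}})\cong\bar C(\H)$, and to invoke Young-measure convergence together with the uniform estimates of Section 3.

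For (i), the first step is to identify $\overline{\mathcal{H}}\setminus\mathcal{H}$. Since every $\phi\in\bar C(\H)$ is constant on $\{\rho=0\}$, the entire vacuum line collapses to a single point $v_0\in V$, and the infinity directions at $\theta=\pm\pi/2$ also coincide with $v_0$; thus $\overline{\mathcal{H}}\setminus\mathcal{H}$ consists precisely of the infinity directions with $\cos\theta>0$, which are reached by sequences along which $\rho\to\infty$. I then introduce the open neighborhoods
$$U_R:=\{(\rho,u)\in\H:\rho>R/2\}\cup(\overline{\mathcal{H}}\setminus\mathcal{H})\subset\overline{\mathcal{H}},$$
verify $\bigcap_R U_R=\overline{\mathcal{H}}\setminus\mathcal{H}$, and approximate $\mathbf 1_{U_R}$ from below by continuous functions. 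Passing to the Young-measure limit yields, for any compact $K\subset\R^2_+$,
$$\int_K\nu_{t,x}(U_R)\,dx\,dt\le\liminf_{\eps\to 0}\big|\{\rho^\eps>R/2\}\cap K\big|.$$
The main energy estimate (Lemma \ref{lemma:mainenergyestimate}), combined with $e(\rho)\sim\log\rho$ at high density, delivers a uniform $L^1_{loc}$ bound on $\rho^\eps\log\rho^\eps$, so Chebyshev's inequality forces the right-hand side to be $O\!\big(1/(R\log R)\big)$. Outer regularity of the Radon measure $\nu_{t,x}$ then gives $\nu_{t,x}(\overline{\mathcal{H}}\setminus\mathcal{H})=0$ almost everywhere.

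For (ii), I will truncate monotonically by setting $\phi_N(\rho,u):=\min\{N,\rho p(\rho)+\rho|u|^3\}$. This function is bounded, continuous on $\bar\H$, vanishes at $\{\rho=0\}$, and has constant radial limit $N$, hence belongs to $\bar C(\H)=C(\overline{\mathcal{H}})$. Standard Young-measure convergence gives $\phi_N(\rho^\eps,u^\eps)\weakto^*\int\phi_N\,d\nu_{t,x}$ in $L^\infty$, and the uniform $L^1_{loc}$ bounds on $\rho^\eps p(\rho^\eps)$ and $\rho^\eps|u^\eps|^3$ from Lemmas \ref{lemma:densityintegrability} and \ref{lemma:velocityintegrability} produce $\int_K\int\phi_N\,d\nu_{t,x}\,dx\,dt\le M$ uniformly in $N$. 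The monotone convergence theorem then finishes the argument.

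For (iii), fix a cutoff $\chi\in C^\infty([0,\infty))$ with $\chi\equiv 1$ on $[0,1]$ and $\chi\equiv 0$ on $[2,\infty)$, and decompose $\phi=\phi_N+(\phi-\phi_N)$ with $\phi_N(\rho,u):=\phi(\rho,u)\chi(\rho/N)$. Because $\phi_N$ has compact support in $\H$ and inherits continuity and vanishing on $\partial\H$ from $\phi$, it lies in $C(\overline{\mathcal{H}})$, so Young-measure convergence gives $\phi_N(\rho^\eps,u^\eps)\weakto\int\phi_N\,d\nu_{t,x}$ weakly in $L^1_{loc}$. The remainder is supported in $\{\rho\ge N\}$, where the support condition $\{w\ge -a,\ z\le a\}$ forces $|u|\le a+k(\rho)$, and the growth hypothesis \eqref{ass:growth} yields $|\phi-\phi_N|\le\varepsilon(N)\rho^2$ with $\varepsilon(N)\to 0$. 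Since $p(\rho)=\rho$ for $\rho\ge 1$, we have $\rho^2=\rho p(\rho)$ in that regime, and Lemma \ref{lemma:densityintegrability} together with part (ii) produce
$$\int_K\!|\phi-\phi_N|(\rho^\eps,u^\eps)\,dx\,dt+\int_K\!\!\int\!|\phi-\phi_N|\,d\nu_{t,x}\,dx\,dt\le C\varepsilon(N),$$
uniformly in $\eps$. A standard triangle-inequality argument in duality with arbitrary $\psi\in L^\infty_c$ then concludes the proof. The main obstacle is (i), where care is required with the topology of the compactification — in particular, the construction of the open neighborhoods $U_R$ and the justification of the one-sided Portmanteau inequality for Young measures — whereas the remaining two parts reduce, once (ii) is in hand, to relatively routine truncation arguments.
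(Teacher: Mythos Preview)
Your proposal is correct and aligns with the paper's strategy. The paper itself does not prove (i) and (ii), deferring to \cite[Proposition~2.3]{LeFlochWestdickenberg} and \cite[Proposition~5.1]{ChenPerep1}; your self-contained arguments for these parts (the Portmanteau--Chebyshev argument for (i) and monotone truncation for (ii)) are exactly in the spirit of those references and are correct as written. For (iii), both you and the paper run the same truncation-plus-uniform-remainder argument; the only difference is that the paper's cutoff $\omega_h$ acts in both $k(\rho)$ and $|u|$ (and also cuts off near the vacuum), whereas you cut off only in $\rho$ at high density. Your streamlining works because $\phi$ already vanishes continuously at $\rho=0$ and the support condition $\{w\geq -a,\ z\leq a\}$ automatically confines $|u|\leq a+k(\rho)$, so a one-variable cutoff suffices to place $\phi_N$ in $\bar C(\H)$; the paper's two-variable cutoff then has to handle the low-density tail separately via the vanishing of $\phi$ on $\partial\H$, which your version avoids.

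One minor imprecision to fix: the claim that ``$\phi_N$ has compact support in $\H$'' is not quite right, since the support of $\phi$ may extend down to $\rho=0$. What you actually need---and what does hold---is that $\phi_N$ is bounded, continuous on $\bar\H$, vanishes on $\{\rho=0\}$, and has identically zero radial limits at infinity (the latter because $\chi(\rho/N)$ kills directions with $\cos\theta>0$, while $\phi$ itself vanishes along $\theta=\pm\pi/2$ by the vacuum condition). This places $\phi_N\in\bar C(\H)\cong C(\overline{\mathcal{H}})$, which is all the Young-measure representation requires.
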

 Simple arguments for (i) and (ii) of this proposition may be found in \cite[Proposition 2.3]{LeFlochWestdickenberg} and \cite[Proposition 5.1]{ChenPerep1} respectively. We give here a proof of (iii).
 \begin{proof}
 For $h>0$, we define a cut-off function $\om_h(\rho,u)\geq 0$ such that $\om_h$ equals 1 on the set
 $$\{(\rho,u)\in\H\,:\, k(\rho)\in[\frac{1}{h},h],\,|u|\leq h\},$$
 and vanishes outside the set
 $$\{(\rho,u)\in\H\,:\, k(\rho)\in[\frac{1}{2h},2h],\,|u|\leq 2h\}.$$
We note that for functions $\phi$ as in the statement, the product $\phi\,\om_h\in \bar C(\H)$. Thus the product $\phi\,\om_h$ is $\nu_{t,x}$-integrable for almost every $(t,x)$. 

We apply the definition of the Young measure to see that, for any compact $K\subset\R$,
 \beqa\label{lim:LUNCH}
 \lim_{h\to\infty}\lim_{\eps\to0}\int_{[0,T]\times K}\phi(\rho^\eps,u^\eps)\,\om_h(\rho^\eps,u^\eps)\,dx\,dt=&\lim_{h\to\infty}\int_{[0,T]\times K}\int_\H\phi\,\om_h\,d\nu_{t,x}\,dx\,dt\\
 =&\int_{[0,T]\times K}\int_\H\phi\,d\nu_{t,x}\,dx\,dt,
 \eeqa
 where we have applied the dominated convergence theorem and part (ii) to pass the second limit.
We now show that the convergence as $h\to\infty$ may be taken to be uniform in $\eps$. We therefore take $0<h_1<h_2<\infty$ and consider the difference
 \beqs
 \int_{[0,T]\times K}\phi(\rho^\eps,u^\eps)\big(\om_{h_1}(\rho^\eps,u^\eps)-\om_{h_2}(\rho^\eps,u^\eps)\big)\,dx\,dt.
 \eeqs
 By construction,
 \beqas
 \supp\,(\om_{h_1}-\om_{h_2})\subset\Big(\Big\{\frac{1}{h_1}\leq k(\rho)\leq h_1,\,|u|\leq h_1\Big\}\Big)^c.
 \eeqas
Therefore,  for $h_1$ sufficiently large, if $(\rho,u)\in \supp\,\phi\cap\supp\,(\om_{h_1}-\om_{h_2})$, the function $k(\rho)$ must satisfy either $k(\rho)\geq \half h_1$ or $k(\rho)\leq \frac{1}{h_1}$. In the latter region, we have a uniform estimate,  
\beqs
\sup_{\{0\leq k(\rho)\leq 1/h_1\}}|\phi(\rho,u)|\leq m_{h_1}\to 0\text{ as }h_1\to\infty.
\eeqs
Moreover, for $\rho$ large, applying \eqref{ass:growth} gives that, for any $\De>0$, there exists $M_\De>0$ such that
  $$|\phi(\rho,u)|\leq M_\De+\De\rho p(\rho).$$
  We then bound, for $h_1$ sufficiently large,
  \beqas
  \Big|\int_{[0,T]\times K}&\phi(\rho^\eps,u^\eps)\big(\om_{h_1}(\rho^\eps,u^\eps)-\om_{h_2}(\rho^\eps,u^\eps)\big)\,dx\,dt\Big|\\
  \leq&\, T|K|m_{h_1}+M_\De|\{(t,x)\in(0,T)\times K\,:\,k(\rho^\eps(t,x))\geq h_1/2\}|\\
  &+\De\int_{[0,T]\times K}\rho^\eps(t,x)p(\rho^\eps(t,x))\,dx\,dt.
  \eeqas
  The first term converges to zero as $h_1\to\infty$ and the last term is bounded by $M\De$ independent of $\eps$ by Lemma \ref{lemma:densityintegrability}, hence may be made arbitrarily small, so we focus on the middle term. By Chebyshev's inequality,
  \beqs
  |\{(t,x)\in(0,T)\times K\,:\,k(\rho^\eps(t,x))\geq h_1/2\}|\leq Me^{-h_1}\int_{[0,T]\times K}\rho^\eps(t,x)p(\rho^\eps(t,x))\,dx\,dt,
  \eeqs
  as $k(\rho)=\log\rho+k(1)$ for $\rho\geq 1$. Applying again Lemma \ref{lemma:densityintegrability}, we conclude that 
 \beq
 \int_{[0,T]\times K}\phi(\rho^\eps,u^\eps)\om_h(\rho^\eps,u^\eps)\,dx\,dt\to \int_{[0,T]\times K}\phi(\rho^\eps,u^\eps)\,dx\,dt \text{ as }h\to\infty
 \eeq
 uniformly in $\eps$ for $\eps\in(0,\eps_0]$.\phantom\qedhere
 
 Thus, returning to \eqref{lim:LUNCH}, we interchange the limits to conclude
 \begin{align*}
 &\lim_{\eps\to0}\int_{[0,T]\times K}\phi(\rho^\eps,u^\eps)\,dx\,dt=\lim_{\eps\to 0}\lim_{h\to\infty}\int_{[0,T]\times K}\phi(\rho^\eps,u^\eps)\om_h(\rho^\eps,u^\eps)\,dx\,dt\\
 &=\lim_{h\to\infty}\lim_{\eps\to0}\int_{[0,T]\times K}\phi(\rho^\eps,u^\eps)\om_h(\rho^\eps,u^\eps)\,dx\,dt=\int_{[0,T]\times K}\int_\H\phi\,d\nu_{t,x}\,dx\,dt. \tag*{\qed}
 \end{align*}
 \end{proof}
From henceforth, for an admissible function $f(\rho,u)$, we write 
$$\overline{f}=\int_{\mathcal{H}}f\,d\nu_{t,x}$$
when there is no confusion over the point $(t,x)\in\R^2_+$. 
 
To conclude this section, we prove the Tartar commutation relation for $\nu_{t,x}$.
 
 \begin{prop}\label{prop:commutation}
 Let $\nu_{t,x}$ be the Young measure generated by the sequence $(\rho^\eps,u^\eps)$ of solutions to \eqref{eq:NS} satisfying the assumptions of Remark \ref{rmk:assumptions} as $\eps\to0$. Then, for almost every $(t,x)\in\R^2_+$, $\nu_{t,x}$ is constrained by the Tartar commutation relation,
 \beq\label{eq:Tartarcommutation}
 \overline{\chi(s_1)\sigma(s_2)-\chi(s_2)\sigma(s_1)}=\overline{\chi(s_1)}\,\overline{\sigma(s_2)}-\overline{\chi(s_2)}\,\overline{\sigma(s_1)}
 \eeq
 for all $s_1,s_2\in\R$.
 \end{prop}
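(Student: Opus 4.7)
The plan is to deduce the Tartar commutation relation from the compactness result of Proposition \ref{prop:W^-1,qcompactness} via the div--curl lemma, applied first to entropy pairs generated by smooth, compactly supported test functions, and then extended to the entropy kernels $\chi, \sigma$ themselves by approximation.

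\emph{Step 1: div--curl for compactly supported test functions.} Fix $\psi_1, \psi_2 \in C^2_c(\R)$ and introduce on $\R^2_+$ the vector fields
\begin{equation*}
U^\eps = \bigl(\eta^{\psi_1}(\rho^\eps, m^\eps),\, q^{\psi_1}(\rho^\eps, m^\eps)\bigr), \qquad V^\eps = \bigl(-q^{\psi_2}(\rho^\eps, m^\eps),\, \eta^{\psi_2}(\rho^\eps, m^\eps)\bigr).
\end{equation*}
By Lemma \ref{lemma:compactentropybounds}, $|\eta^{\psi_i}|\leq M_\psi\rho/\sqrt{\log(\rho+1)}$ and $|q^{\psi_i}|\leq M_\psi\rho$, and the density bound from Lemma \ref{lemma:densityintegrability} (giving $\rho^\eps p(\rho^\eps)\sim(\rho^\eps)^2$ locally bounded in $L^1$) shows that $U^\eps$ and $V^\eps$ are bounded in $L^2_{loc}(\R^2_+)$, with $U^\eps$ in fact equi-integrable in this space. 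The divergence of $U^\eps$ and, up to a sign, the curl of $V^\eps$ are precisely the weak entropy dissipation measures $\partial_t\eta^{\psi_i}(\rho^\eps,m^\eps) + \partial_x q^{\psi_i}(\rho^\eps,m^\eps)$, which by Proposition \ref{prop:W^-1,qcompactness} are precompact in $W^{-1,q}_{loc}$ for every $q \in (1,2)$. Invoking the refinement of the div--curl lemma due to Conti--Dolzmann--M\"uller \cite{CDM}, which is designed exactly for the case when one has $W^{-1,q}$-compactness with $q<2$ rather than the classical $H^{-1}$, we conclude
\begin{equation*}
U^\eps \cdot V^\eps \weakto U \cdot V \qquad \text{in } \mathcal{D}'(\R^2_+).
\end{equation*}
Re-expressing both sides through the Young measure $\nu_{t,x}$ produced in the previous subsection, this identity becomes
\begin{equation}\label{eq:tartarpsi}
\overline{\eta^{\psi_1} q^{\psi_2} - \eta^{\psi_2} q^{\psi_1}} = \overline{\eta^{\psi_1}}\,\overline{q^{\psi_2}} - \overline{\eta^{\psi_2}}\,\overline{q^{\psi_1}}
\end{equation}
for almost every $(t,x)\in\R^2_+$; the passage from distributional to pointwise a.e.~equality is justified by $\nu_{t,x}$-integrability of the individual terms, ensured by Proposition \ref{prop:YMtestfunctions}(iii) together with the support and growth controls in Lemma \ref{lemma:compactentropybounds}.

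\emph{Step 2: extension to the kernels.} Using the representations $\eta^{\psi_i}(\rho,u) = \int\psi_i(s)\chi(\rho,u,s)\,ds$ and $q^{\psi_i}(\rho,u) = \int\psi_i(s)\sigma(\rho,u,s)\,ds$ and applying Fubini in \eqref{eq:tartarpsi}, we obtain
\begin{equation*}
\iint \psi_1(s_1)\psi_2(s_2)\Bigl[\,\overline{\chi(s_1)\sigma(s_2)-\chi(s_2)\sigma(s_1)} - \overline{\chi(s_1)}\,\overline{\sigma(s_2)} + \overline{\chi(s_2)}\,\overline{\sigma(s_1)}\,\Bigr]\,ds_1\,ds_2 = 0
\end{equation*}
for every $\psi_1,\psi_2\in C^2_c(\R)$. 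Consequently, the bracketed expression vanishes for almost every $(s_1,s_2)\in\R^2$, and the continuity in $s$ of $\overline{\chi(s)}$ and $\overline{\sigma(s)}$ (inherited from the H\"older regularity of $\chi,\sigma$ in $s$, cf.~Remark \ref{rmk:chiHoelderbound} and the explicit expansions \eqref{eq:entropyexpansion}--\eqref{eq:fluxexpansion}) upgrades this to equality at every $(s_1,s_2)$, yielding \eqref{eq:Tartarcommutation}.

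\emph{Main obstacle.} The critical difficulty is not the algebraic manipulation but the justification of the div--curl step. Because the pressure is only linear at high densities, the entropy dissipation measures fail to be compact in $H^{-1}$, and the entropy pairs themselves lie only in $L^2_{loc}$, so the classical Murat--Tartar theorem does not apply. The sharper sublogarithmic bound $|\eta^{\psi}|\lesssim\rho/\sqrt{\log(\rho+1)}$ from Lemma \ref{lemma:compactentropybounds} is what provides the residual equi-integrability needed for the refined div--curl of \cite{CDM} to go through in spite of the $q<2$ restriction in Proposition \ref{prop:W^-1,qcompactness}. A secondary technical point, handled via Proposition \ref{prop:YMtestfunctions}(iii) and the growth estimates on $\chi,\sigma$, is to certify that each product $\chi(s_i)\sigma(s_j)$ is genuinely $\nu_{t,x}$-integrable, so that the distributional limit identified by div--curl is represented by the Young-measure averages appearing in \eqref{eq:Tartarcommutation}.
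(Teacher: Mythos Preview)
Your overall strategy matches the paper's, but Step~1 contains a genuine error in verifying the hypotheses of the Conti--Dolzmann--M\"uller lemma. You claim that $U^\eps=(\eta^{\psi_1},q^{\psi_1})$ is equi-integrable in $L^2_{loc}$; this is false for the second component. Lemma~\ref{lemma:compactentropybounds} gives only $|q^{\psi_1}|\leq M_\psi\rho$ with no logarithmic gain, so $|q^{\psi_1}|^2\leq M\rho^2$, and the sole available bound on $\rho^2$ is the $L^1_{loc}$ bound from Lemma~\ref{lemma:densityintegrability}, which yields boundedness but not equi-integrability. Hence $U^\eps$ is not known to be $L^2$-equi-integrable, and the argument as written does not close.

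The fix is exactly what the paper does: the CDM lemma (Lemma~\ref{lemma:CDMdiv-curl}) requires equi-integrability of the \emph{product} $U^\eps\cdot V^\eps$ in $L^1$, not of either factor in $L^2$. Here
\[
|U^\eps\cdot V^\eps|=|\eta^{\psi_1}q^{\psi_2}-\eta^{\psi_2}q^{\psi_1}|\leq M\Big(1+\frac{(\rho^\eps)^2}{\sqrt{\log(\rho^\eps+1)}}\Big),
\]
because each summand carries exactly one $\eta$ factor, which supplies the sublogarithmic improvement, paired with one $q$ factor bounded by $M\rho$. Since $(\rho^\eps)^2$ is bounded in $L^1_{loc}$ and the displayed quantity is $o(\rho^2)$ as $\rho\to\infty$, de la Vall\'ee-Poussin gives the required $L^1$-equi-integrability of the product. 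This is the correct place to exploit the bound you flagged in your ``Main obstacle'' paragraph; once you relocate it from $U^\eps$ to the scalar product, the rest of your argument (including Step~2) goes through and coincides with the paper's proof.
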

The main tool for proving this result is the div-curl lemma for sequences with divergence and curl compact in $W^{-1,1}_{loc}$, \cite[Theorem]{CDM}, which we now recall.
\begin{lemma}[Div-Curl Lemma, {\cite{CDM}}]\label{lemma:CDMdiv-curl}
 Let $\Om\subset\R^n$ be open, bounded and Lipschitz, $p,q\in(1,\infty)$ with $\frac{1}{p}+\frac{1}{q}=1$, and suppose that $v^\eps$, $w^\eps$ are bounded sequences in $L^p(\Om;\R^n)$ and $L^{q}(\Om;\R^n)$ respectively such that as $\eps\to0$,
 \beqas
 v^\eps\weakto v \text{ in }L^p(\Om;\R^n),\\
 w^\eps\weakto w \text{ in }L^{q}(\Om;\R^n),
 \eeqas
 and that, moreover,
 \beqas
 \div v^\eps \text{ and }\curl w^\eps \text{ are (pre)-compact in }W^{-1,1}_{loc}(\Om),\text{ }W^{-1,1}_{loc}(\Om;\R^{n\times n})\text{ respectively}.
 \eeqas
 If, in addition, the sequence $v^\eps\cdot w^\eps$ is equi-integrable, then, as $\eps\to0$,
 $$v^\eps\cdot w^\eps\weakto v\cdot w \text{ in }\mathcal{D}'.$$
 \end{lemma}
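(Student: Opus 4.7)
The plan is to apply the CDM div--curl lemma to weak entropy pairs generated by compactly supported test functions, deducing a commutation identity at the entropy-pair level, and then to pass to the kernel identity by a Fubini argument. Fix $\psi_1,\psi_2\in C_c^2(\R)$, write $m^\eps=\rho^\eps u^\eps$, and set
\beqas
V^\eps:=\big(\eta^{\psi_1}(\rho^\eps,m^\eps),\,q^{\psi_1}(\rho^\eps,m^\eps)\big),\qquad W^\eps:=\big(-q^{\psi_2}(\rho^\eps,m^\eps),\,\eta^{\psi_2}(\rho^\eps,m^\eps)\big).
\eeqas
In $(t,x)$-coordinates, $\div V^\eps=\partial_t\eta^{\psi_1}+\partial_x q^{\psi_1}$ and $\curl W^\eps=\partial_t\eta^{\psi_2}+\partial_x q^{\psi_2}$ are both pre-compact in $W^{-1,q}_{loc}(\R^2_+)$ for some $q\in(1,2)$ by Proposition \ref{prop:W^-1,qcompactness}, and therefore also in $W^{-1,1}_{loc}$.

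Next I check the remaining hypotheses of Lemma \ref{lemma:CDMdiv-curl}. By Lemma \ref{lemma:compactentropybounds}, $|\eta^{\psi_i}|+|q^{\psi_i}|\leq M_\psi\rho$, with the sharper bound $|\eta^{\psi_i}|\leq M_\psi\rho/\sqrt{\log(\rho+1)}$ at high densities. Combined with Lemma \ref{lemma:densityintegrability} and \eqref{ass:pressure2} (under which $\rho p(\rho)=\rho^2$ is uniformly locally integrable for $\rho\geq 1$), this gives $V^\eps,W^\eps\in L^2_{loc}(\R^2_+;\R^2)$ uniformly in $\eps$. For the equi-integrability of $V^\eps\cdot W^\eps$, the pointwise bound $|V^\eps\cdot W^\eps|\leq M_\psi^2\,(\rho^\eps)^2/\sqrt{\log(\rho^\eps+1)}$ in the high-density region combines with the previous $L^1_{loc}$ estimate to yield, for every compact $K\subset\R^2_+$ and every $R>1$,
\beqas
\int_{K\cap\{\rho^\eps\geq R\}}|V^\eps\cdot W^\eps|\,dx\,dt\leq \frac{M_\psi^2}{\sqrt{\log R}}\int_K(\rho^\eps)^2\,dx\,dt\leq\frac{M(K,\psi)}{\sqrt{\log R}},
\eeqas
which vanishes uniformly in $\eps$ as $R\to\infty$; the contribution from $\{\rho^\eps<R\}$ is uniformly bounded pointwise, so equi-integrability on compact subsets of $\R^2_+$ follows.

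Lemma \ref{lemma:CDMdiv-curl} then delivers $V^\eps\cdot W^\eps\rightharpoonup V\cdot W$ in $\mathcal{D}'$. Identifying both sides via the Young measure---Proposition \ref{prop:YMtestfunctions}(iii) applies because the products $\eta^{\psi_i}q^{\psi_j}$ have conical support (by Lemma \ref{lemma:compactentropybounds}) and satisfy $|\eta^{\psi_i}q^{\psi_j}|/\rho^2\to0$ as $\rho\to\infty$ by the sharper bound on $\eta^{\psi_i}$---we obtain
\beqas
\overline{\eta^{\psi_1}q^{\psi_2}-\eta^{\psi_2}q^{\psi_1}}=\overline{\eta^{\psi_1}}\,\overline{q^{\psi_2}}-\overline{\eta^{\psi_2}}\,\overline{q^{\psi_1}}
\eeqas
for a.e.~$(t,x)\in\R^2_+$ and all $\psi_1,\psi_2\in C_c^2(\R)$. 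Substituting $\eta^{\psi_i}(\rho,u)=\int\psi_i(s)\chi(\rho,u-s)\,ds$ and $q^{\psi_i}(\rho,u)=\int\psi_i(s)\sigma(\rho,u,s)\,ds$ and exchanging orders of integration---legitimate because $\chi,\sigma$ are bounded and compactly supported in $s$ for each $(\rho,u)$ and $\psi_i$ has compact support---this identity reads
\beqas
\int_{\R^2}\psi_1(s_1)\psi_2(s_2)\Big(\overline{\chi(s_1)\sigma(s_2)-\chi(s_2)\sigma(s_1)}-\overline{\chi(s_1)}\,\overline{\sigma(s_2)}+\overline{\chi(s_2)}\,\overline{\sigma(s_1)}\Big)\,ds_1\,ds_2=0.
\eeqas
As $\psi_1,\psi_2\in C_c^2(\R)$ are arbitrary, the bracketed expression vanishes for a.e.~$(s_1,s_2)\in\R^2$, and by the H\"older continuity of $\chi$ and $\sigma$ in the kinetic variable (Remark \ref{rmk:chiHoelderbound} and its analogue for $\sigma$) the identity \eqref{eq:Tartarcommutation} extends to all $(s_1,s_2)\in\R^2$.

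The main obstacle is the equi-integrability step: because the linear pressure at high densities only yields $\rho p(\rho)\in L^1_{loc}$ uniformly (not $L^{1+\delta}_{loc}$), the entropy dissipations are only $W^{-1,q}$-compact for $q<2$, blocking the classical $L^2$-based Tartar--Murat div--curl lemma. The CDM refinement weakens the compactness requirement to $W^{-1,1}$ at the cost of an equi-integrability hypothesis on the product $V^\eps\cdot W^\eps$, and satisfying this hypothesis is possible only because of the additional $1/\sqrt{\log\rho}$ decay of $\eta^\psi$ furnished by the high-density isothermal kernel analysis in \S\ref{sec:entropy}.
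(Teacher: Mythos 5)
There is a basic mismatch between what you have written and the statement you were asked to prove. The statement is Lemma \ref{lemma:CDMdiv-curl} itself, i.e.\ the refined div-curl lemma with divergence and curl only compact in $W^{-1,1}_{loc}$ and with an equi-integrability hypothesis on the product. Your argument does not prove this: it \emph{invokes} Lemma \ref{lemma:CDMdiv-curl} as a black box ("Lemma \ref{lemma:CDMdiv-curl} then delivers $V^\eps\cdot W^\eps\rightharpoonup V\cdot W$ in $\mathcal{D}'$") and uses it to derive the Tartar commutation relation, which is a different statement — Proposition \ref{prop:commutation} of the paper. As a proof of the lemma it is therefore circular. Note also that the paper does not prove this lemma either: it is quoted from \cite{CDM}, where the proof rests on tools entirely absent from your writeup (a truncation/decomposition argument that reduces the $W^{-1,1}_{loc}$ setting to the classical Murat--Tartar div-curl lemma, with the equi-integrability of $v^\eps\cdot w^\eps$ used precisely to show that the truncation errors do not contribute in the limit). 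Nothing in your proposal addresses why $W^{-1,1}_{loc}$-compactness plus equi-integrability suffices, which is the entire content of the statement.

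For what it is worth, the argument you did write is essentially a correct reconstruction of the paper's proof of Proposition \ref{prop:commutation}: the same choice of vector fields built from compactly supported entropy pairs (up to an inessential sign/ordering in $W^\eps$), the same uniform $L^2_{loc}$ bounds via Lemma \ref{lemma:compactentropybounds} and Lemma \ref{lemma:densityintegrability}, the same $W^{-1,q}_{loc}$-compactness from Proposition \ref{prop:W^-1,qcompactness}, and the same equi-integrability estimate exploiting the $1/\sqrt{\log(\rho+1)}$ gain from the isothermal kernels; your final Fubini-plus-continuity step for dropping the test functions $\psi_1,\psi_2$ is a reasonable expansion of what the paper states in one line. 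But that establishes the commutation relation, not the div-curl lemma, so the task as posed is not accomplished; a genuine proof would have to follow the lines of \cite{CDM} (or at least reduce the statement to the standard $L^p$--$L^q$ div-curl lemma by a truncation argument controlled by the equi-integrability hypothesis).
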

 \begin{proof}[Proof of Proposition \ref{prop:commutation}]
 We recall from Lemma \ref{lemma:compactentropybounds} that if $\psi\in C^2_c(\R)$, then 
 \beqas
 |\eta^\psi(\rho,m)|\leq
M\rho\min\Big\{1,\frac{1}{\sqrt{\log(\rho+1)}}\Big\}
\quad\text{ and }\quad
|q^\psi(\rho,m)|\leq M \rho,
 \eeqas
 and the supports of $\eta^{\psi}$, $q^{\psi}$ are contained in sets of the form $\{w(\rho,u)\geq z_*,\,z(\rho,u)\leq w_*\}$.
 We choose two test functions, $\psi_1,\psi_2\in C^2_c(\R)$ and consider the sequences of vector fields
 $$v^\eps=(\eta^{\psi_1}(\rho^\eps,\rho^\eps u^\eps),q^{\psi_1}(\rho^\eps,\rho^\eps u^\eps)),\quad w^\eps=(q^{\psi_2}(\rho^\eps,\rho^\eps u^\eps),-\eta^{\psi_2}(\rho^\eps,\rho^\eps u^\eps)).$$
 From Lemma \ref{lemma:densityintegrability} and the bound just stated, it is clear that both $v^\eps$ and $w^\eps$ are uniformly bounded sequences in $L^2_{loc}$. Moreover, by Proposition \ref{prop:YMtestfunctions}(iii), 
 \beqas
 v^\eps\weakto (\overline{\eta^{\psi_1}},\overline{q^{\psi_1}}) \text{ in $L^2_{loc}$}\quad\text{ and }\quad
 w^\eps\weakto (\overline{q^{\psi_2}},-\overline{\eta^{\psi_2}}) \text{ in $L^2_{loc}$.}
 \eeqas
 From Proposition \ref{prop:W^-1,qcompactness}, we obtain immediately that the entropy dissipation measures 
 \beqas
&\eta^{\psi_1}(\rho^\eps,\rho^\eps u^\eps)_t+q^{\psi_1}(\rho^\eps,\rho^\eps u^\eps)_x= \div v^\eps,\\
 &\eta^{\psi_2}(\rho^\eps,\rho^\eps u^\eps)_t+q^{\psi_2}(\rho^\eps,\rho^\eps u^\eps)_x=(\curl w^\eps)_{12}=-(\curl w^\eps)_{21},
 \eeqas
 are compact in $W^{-1,q}_{loc}$ for all $q\in[1,2)$, in particular in $W^{-1,1}_{loc}$.
 Finally, we see that the product $v^\eps\cdot w^\eps$ satisfies the bound
 $$|v^\eps\cdot w^\eps|\leq M\Big(1+\frac{(\rho^\eps)^2}{\sqrt{|\log\rho^\eps|}}\Big),$$
 hence it is locally equi-integrable as a result of the uniform bound of Lemma \ref{lemma:densityintegrability}.
 
 Thus, by the div-curl lemma, Lemma \ref{lemma:CDMdiv-curl}, we may pass to the limit in the product to obtain
 \beqs
 v^\eps\cdot w^\eps\weakto \overline{\eta^{\psi_1}}\,\overline{q^{\psi_2}}-\overline{\eta^{\psi_2}}\,\overline{q^{\psi_1}} \quad\text{ in }\mathcal{D}'.
 \eeqs
 On the other hand, passing to the Young measure limit directly in the product (observe that the previously described bounds on the entropies are sufficient to allow the product as a test function for the Young measure by Proposition \ref{prop:YMtestfunctions}(iii)),
 \beqs
 v^\eps\cdot w^\eps\weakto \overline{\eta^{\psi_1}q^{\psi_2}-\eta^{\psi_2}q^{\psi_1}} \quad \text{ in }L^1_{loc}.
 \eeqs
 By uniqueness of limits, we therefore obtain
 \beq\label{eq:testedcommut}
 \overline{\eta^{\psi_1}q^{\psi_2}-\eta^{\psi_2}q^{\psi_1}}=\overline{\eta^{\psi_1}}\,\overline{q^{\psi_2}}-\overline{\eta^{\psi_2}}\,\overline{q^{\psi_1}}.
 \eeq
 Dropping the test functions $\psi_1,\psi_2\in C^2_c(\R)$, we obtain the equivalent relation for the entropy and entropy flux kernels, that is, for $s_1,s_2\in\R$,
 \beqs
 \overline{\chi(s_1)\sigma(s_2)-\chi(s_2)\sigma(s_1)}=\overline{\chi(s_1)}\,\overline{\sigma(s_2)}-\overline{\chi(s_2)}\,\overline{\sigma(s_1)}.
 \eeqs
 \end{proof}

\section{Reduction framework for the Young measure}\label{sec:generalreduction}
We now analyse the Young measure generated by the solutions of the Navier-Stokes equations \eqref{eq:NS} and show that the Tartar commutation relation implies that the support of the Young measure is either contained in the vacuum region $V$ or at a single point in $\H$.

\begin{thm}\label{thm:reduction}
Let $\nu\in \text{{\rm Prob}}(\mathcal{H})$ be a probability measure such that the function $(\rho,u)\mapsto \rho^2\in L^1(\mathcal{H},\nu)$ and, for all $s_1,s_2\in\R$,
\beq\label{eq:Tartar}
 \overline{\chi(s_1)\sigma(s_2)-\chi(s_2)\sigma(s_1)}=\overline{\chi(s_1)}\,\overline{\sigma(s_2)}-\overline{\chi(s_2)}\,\overline{\sigma(s_1)}.\eeq
Then either $\nu$ is supported in $V$ or the support of $\nu$ is a single point in $\H$.
\end{thm}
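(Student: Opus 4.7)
The plan is to follow the classical reduction strategy for measure-valued solutions constrained by the Tartar commutation relation, originating with DiPerna \cite{DiPerna2} and refined by Chen-LeFloch, LeFloch-Westdickenberg and Chen-Perepelitsa, while modifying it to accommodate a Young measure whose support may extend to arbitrarily large densities. Three ingredients drive the argument: (i) the fractional derivative expansions of $\chi(\rho,u-s)$ and $\sigma(\rho,u,s)$ at the boundary of their support $\{(u-s)^2 = k(\rho)^2\}$, recorded in Lemma \ref{lemma:fractionalderivativeexpansions}; (ii) the non-degeneracy of the coefficient combination $D(\rho) > 0$ from Proposition \ref{prop:coefficient}; and (iii) the high-density representations via the isothermal kernels of Theorem \ref{thm:entropykernelexpansionshighdensity}, whose coefficient bounds control contributions from the tail $\{\rho \gg 1\}$.

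Proceeding step by step: if $\nu(\mathcal{H} \setminus V) = 0$ then $\supp \nu \subset V$ and the first alternative holds, so assume $\nu$ carries mass in $\H$. Define
$$w_+ := \sup\{u+k(\rho) : (\rho,u) \in \supp \nu \cap \H\}, \qquad z_- := \inf\{u-k(\rho) : (\rho,u) \in \supp \nu \cap \H\}.$$
Using the moment bound $\overline{\rho^2} < \infty$, together with the fact that in the compactification $k(\rho) \to \infty$ corresponds to mass concentrating in $V$, one verifies $w_+ < \infty$ and $z_- > -\infty$. Next, apply $\partial^{\lambda+1}_{s_2}$ to the Tartar relation \eqref{eq:Tartar} and insert the expansions \eqref{eq:chifractionalderivative}--\eqref{eq:sigmafractionalderivative}; the resulting identity becomes a sum of terms of the forms $\delta(s_2 - u \pm k(\rho))$, $H(s_2 - u \pm k(\rho))$, $\mathrm{PV}(s_2 - u \pm k(\rho))$, $\mathrm{Ci}(s_2 - u \pm k(\rho))$, and H\"older-continuous remainders, which are paired against test functions in $s_1$. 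Letting $s_2 \uparrow w_+$, only the Dirac contributions on the characteristic curve $\{u + k(\rho) = w_+\}$ survive at leading order, while $H$, $\mathrm{PV}$, and $\mathrm{Ci}$ terms yield strictly lower-order limits. The resulting relation, which couples the $a_\sharp/a_\flat$ coefficients of $\chi$ to the $b_\sharp/b_\flat$ coefficients of $\sigma - u\chi$, combined with the non-degeneracy $D(\rho) > 0$ from Proposition \ref{prop:coefficient}, forces $\nu$ restricted to $\{u+k(\rho) = w_+\}$ to be a single Dirac mass at a point $(\rho_1,u_1)$. The symmetric analysis with $s_1 \downarrow z_-$ yields a Dirac mass on $\{u-k(\rho) = z_-\}$ at a point $(\rho_2,u_2)$; since these two level curves intersect in at most one point in $\H$ (as $k$ is strictly increasing) and a final consistency check against \eqref{eq:Tartar} rules out disjoint concentrations, the full support must reduce to a single point $(\rho_0,u_0) \in \H$.

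The principal obstacle, and the essential novelty compared with prior work in the bounded-support setting, is justifying the limits $s_2 \uparrow w_+$ and $s_1 \downarrow z_-$ uniformly over a support that may reach $\rho = \infty$. For $\rho \leq \rho_*$ the expansions \eqref{eq:entropyexpansion}--\eqref{eq:fluxexpansion} suffice as in \cite{ChenLeFloch}, but for $\rho \geq \rho_*$ one must work with the isothermal representation of Theorem \ref{thm:entropykernelexpansionshighdensity}, in which Dirac masses sit at $s = u \pm \log\rho$ rather than $s = u \pm k(1)$. The growth bound $|A_{j,\pm}(\rho)| + |B_{j,\pm}(\rho)| \leq C\sqrt{\rho}\log\rho$ from \eqref{ineq:highdensityderivativecoeffbounds} is precisely calibrated so that these singular distributions, when integrated against $\nu$, are controlled by the moment bound $\overline{\rho^2} < \infty$. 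The main technical labour is the careful bookkeeping of cross terms bridging the low-density expansion across the transition $\rho = \rho_*$ to the isothermal kernels, together with a dominated-convergence argument that shows only the leading-order Dirac contributions survive in the limit to dictate the structure of $\nu$; this is where the representation formulae of \S\ref{sec:entropy} are used in an essential way.
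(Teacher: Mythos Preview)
Your proposal has a genuine gap at the very first step. You assert that $w_+ := \sup\{u+k(\rho) : (\rho,u) \in \supp\nu \cap \H\} < \infty$ and $z_- > -\infty$ follow from the moment bound $\overline{\rho^2} < \infty$. They do not. A finite second moment in $\rho$ does not imply bounded support in $\rho$, and it imposes no constraint whatsoever on $u$; the hypothesis of the theorem contains no moment bound on $u$ at all. Your supporting remark that ``in the compactification $k(\rho)\to\infty$ corresponds to mass concentrating in $V$'' is also incorrect: $V$ is the weak-star closure of the vacuum $\{\rho=0\}$, where $k(\rho)\to 0$, not the high-density boundary. The whole point of the framework here (as opposed to the $L^\infty$ settings of DiPerna and Chen--LeFloch) is that the support of $\nu$ may genuinely be unbounded, so the classical ``edge of support'' argument---taking $s_2 \uparrow w_+$---is unavailable.

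The paper avoids this issue by never working at the edge of the support. Instead it takes \emph{three} variables $s_1,s_2,s_3$, multiplies the commutation relation by $\overline{\chi(s_3)}$, cyclically permutes and sums so that the right-hand side cancels identically, and then applies the fractional operators $P_2=\partial_{s_2}^{\lambda+1}$ and $P_3=\partial_{s_3}^{\lambda+1}$ to the resulting identity. Mollifying in $s_2,s_3$ and sending $s_2,s_3\to s_1$ via the mollification parameter $\delta\to 0$, two cancellation lemmas (one for each side) identify the limit; the moment bound $\rho^2\in L^1(\nu)$ is used only to dominate the mollified integrands uniformly in $\delta$ by $C(\rho^2+1)$ via the coefficient bounds \eqref{ineq:highdensityderivativecoeffbounds}, so that dominated convergence passes the limit inside $\nu$. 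The outcome is the identity $\int_{\mathcal H} Z(\rho)\,\overline{\chi(u\pm k(\rho))}\,d\nu=0$ with $Z(\rho)>0$, from which the Dirac structure is read off by analysing the open set $\{s:\overline{\chi(s)}>0\}$. Your proposal applies a single fractional derivative to the raw Tartar relation and then tries to localise at a possibly nonexistent boundary point; this is a different (and here unworkable) strategy, not merely a different bookkeeping of the same ideas.
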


\begin{proof}
We begin by taking $s_1,s_2,s_3\in\R$. Multiplying the commutation relation \eqref{eq:Tartar} for $s_1,s_2$ by $\overline{\chi(s_3)}$, we obtain
$$\overline{\chi(s_3)}\,\overline{\chi(s_1)\sigma(s_2)-\chi(s_2)\sigma(s_1)}=\overline{\chi(s_3)}\,\overline{\chi(s_1)}\,\overline{\sigma(s_2)}-\overline{\chi(s_3)}\,\overline{\chi(s_2)}\,\overline{\sigma(s_1)}.$$
Cyclically permuting $s_1$, $s_2$, $s_3$ and summing the obtained relations, we observe that the right hand sides cancel, leaving us with
\beqas
\overline{\chi(s_1)}\,\overline{\chi(s_2)\sigma(s_3)-\chi(s_3)\sigma(s_2)}=&\,\overline{\chi(s_3)}\,\overline{\chi(s_2)\sigma(s_1)-\chi(s_1)\sigma(s_2)}\\&-\overline{\chi(s_2)}\,\overline{\chi(s_3)\sigma(s_1)-\chi(s_1)\sigma(s_3)}.
\eeqas
We apply the fractional derivative operators $P_2=\partial_{s_2}^{\la+1}$ and $P_3=\partial_{s_3}^{\la+1}$ in the sense of distributions to obtain
\beqa\label{eq:cyclic}
\overline{\chi(s_1)}\,\overline{P_2\chi(s_2)P_3\sigma(s_3)-P_3\chi(s_3)P_2\sigma(s_2)}=&\,\overline{P_3\chi(s_3)}\,\overline{P_2\chi(s_2)\sigma(s_1)-\chi(s_1)P_2\sigma(s_2)}\\&-\overline{P_2\chi(s_2)}\,\overline{P_3\chi(s_3)\sigma(s_1)-\chi(s_1)P_3\sigma(s_3)},
\eeqa
where, for example, the distribution $\overline{P_2\chi(s_2)}$ acts on test functions $\psi\in C_c^\infty(\R)$ by
$$\langle \overline{P_2\chi(s_2)},\psi\rangle=-\int_\R\overline{\partial_{s_2}^\la\chi(s_2)}\psi'(s_2)\,ds_2.$$
We take two standard mollifying kernels, $\phi_2,\phi_3\in C_c^\infty(-1,1)$ such that $\int_\R\phi_j(s_j)\,ds_j=1$ and $\phi_j\geq 0$ for $j=2,3$. For $\de>0$, we  define $\phi_j^\de(s_j)=\de^{-1}\phi_j(s_j/\de)$ in the usual way. We choose $\phi_2$, $\phi_3$ such that the quantity
$$Y(\phi_2,\phi_3)=\int_{-\infty}^\infty\int_{-\infty}^{s_2}\phi_2(s_2)\phi_3(s_3)-\phi_2(s_3)\phi_3(s_2)\,ds_3\,ds_2>0.$$
We now mollify and take $s_2,s_3\to s_1$, i.e.~integrate \eqref{eq:cyclic} against the function $\phi_2^\de(s_1-s_2)\phi_3^\de(s_1-s_3)$ with respect to $s_2$ and $s_3$. This yields
\beqa\label{eq:penguin}
\overline{\chi(s_1)}\,\overline{P_2\chi_2^\de P_3\sigma_3^\de-P_3\chi_3^\de P_2\sigma_2^\de}=&\overline{P_3\chi_3^\de}\,\overline{P_2\chi^\de_2\sigma_1-\chi_1P_2\sigma^\de_2}\\&-\overline{P_2\chi^\de_2}\,\overline{P_3\chi^\de_3\sigma_1-\chi_1P_3\sigma^\de_3}
\eeqa
with the obvious notation, where, for example,
$$\overline{P_2\chi^\de_2}=\overline{P_2\chi_2}*\phi_2^\de(s_1)=\int\overline{\partial_{s_2}^\la\chi(s_2)}\de^{-2}\phi_2'\big(\frac{s_1-s_2}{\de}\big)\,ds_2.$$
We now claim the following two lemmas, to be proved later.
\begin{lemma}\label{lemma:cancellation1}
For any test function $\psi\in C_c^\infty(\R)$,
\beqas
\lim_{\de\to0}\int_\R\overline{\chi(s_1)}&\,\overline{P_2\chi_2^\de P_3\sigma_3^\de-P_3\chi_3^\de P_2\sigma_2^\de}(s_1)\psi(s_1)\,ds_1\\
=&\int_{\mathcal{H}}Y(\phi_2,\phi_3)Z(\rho)\sum_\pm(K^\pm)^2\overline{\chi(u\pm k(\rho))}\psi(u\pm k(\rho))\,d\nu(\rho,u),
\eeqas
 where $Z(\rho)=(\la+1)M_\la^{-2}k(\rho)^{2\la}D(\rho)>0$ for $\rho>0$, and $D(\rho)$ is as defined in Proposition \ref{prop:coefficient}.
\end{lemma}

\begin{lemma}\label{lemma:cancellation2}
For any test function $\psi\in C_c^\infty(\R)$,
\beqs
\lim_{\de\to0}\int_\R\overline{P_3\chi_3^\de}\,\overline{P_2\chi^\de_2\sigma_1-\chi_1P_2\sigma^\de_2}\psi(s_1)\,ds_1=\lim_{\de\to0}\int_\R\overline{P_2\chi^\de_2}\,\overline{P_3\chi^\de_3\sigma_1-\chi_1P_3\sigma^\de_3}\psi(s_1)\,ds_1.
\eeqs
\end{lemma}
With these lemmas, we multiply \eqref{eq:penguin} by $\psi(s_1)\in C_c^\infty(\R)$, integrate in $s_1$ and pass $\de\to0$ to obtain
\beq
Y(\phi_2,\phi_3)\int_{\mathcal{H}}Z(\rho)\sum_\pm(K^\pm)^2\overline{\chi(u\pm k(\rho))}\psi(u\pm k(\rho))\,d\nu(\rho,u)=0.
\eeq
As $Y(\phi_2,\phi_3)\neq 0$, the coefficient $Z(\rho)>0$ for $\rho>0$, $\overline{\chi(s)}\geq 0$ for all $s\in\R$ and $\psi(s)$ is an arbitrary test function, this implies that 
\begin{align}\label{eq:pangolin}
\int_{\mathcal{H}}Z(\rho)\overline{\chi(u+ k(\rho))}\,d\nu(\rho,u)=0\,\,\text{ and }\int_{\mathcal{H}}Z(\rho)\overline{\chi(u- k(\rho))}\,d\nu(\rho,u)=0.
\end{align}
We define a set $$\mathbb{S}=\{s\in\R:\overline{\chi(s)}>0\}.$$  Note that in the case that $\mathbb{S}=\emptyset$, we immediately have that $\overline{\chi(s)}=0$ for all $s\in\R$, and hence $\supp\,\nu\subset V$. As $s\mapsto\overline{\chi(s)}$ is a continuous map, $\mathbb{S}$ is open.

Assume on the other hand that $\mathbb{S}\neq\emptyset$. Then, since $\mathbb{S}$ is an open set, it is an at most countable union of open intervals, and so we may write
$$\mathbb{S}=\bigcup_{k}(z_k,w_k),$$
for at most countably many numbers $z_k$, $w_k$ in the extended real line $\R\cup\{-\infty\}\cup\{+\infty\}$ such that $z_k<w_k<z_{k+1}$ for all $k$.
Thus, as $\supp\,\chi(s)=\{z\leq s\leq w\}$, we obtain that
$$\supp\,\nu\subset\bigcup_k\{(\rho,u)\in\H\,:\,z_k\leq z(\rho,u)< w(\rho,u)\leq w_k\}\cup V.$$
Observe that, for each $k$, if $z_k$ and $w_k$ are both finite, then $\{(\rho,u)\,:\,z_k\leq z(\rho,u)< w(\rho,u)\leq w_k\}$ is a bounded set (as $k(\rho)\to\infty$ as $\rho\to\infty$).

Now we deduce from \eqref{eq:pangolin} that, for all $k$,
$$\supp\,\nu\cap\{(\rho,u)\in\H : \,w(\rho,u)\in(z_k,w_k)\text{ or }z(\rho,u)\in(z_k,w_k)\}=\emptyset.$$
Thus the support of the measure $\nu$ must be contained in the vacuum $V$ and an at most countable union of points $(\rho_k,u_k)=(\rho(w_k,z_k),u(w_k,z_k))$:
$$\supp\,\nu\subset V\cup\bigcup_{ z_k,w_k\text{ finite}} (\rho_k,u_k).$$
As the support of $\chi(s)$ is $\supp\,\chi(s)=\{z(\rho,u)\leq s\leq w(\rho,u)\}$, by construction, the points $(\rho_k,u_k)$ are such that if $(\rho_k,u_k)\in\supp\,\chi(s)$, then $(\rho_{k'},u_{k'})\not\in\supp\,\chi(s)$ for all $k'\neq k$.

Thus we may analyse the measure $\nu$ at each point $(\rho_k,u_k)$ individually via the commutation relation \eqref{eq:Tartar} as follows. We write $$\nu=\nu_V+\sum_k\al_k \de_{(\rho_k,u_k)},$$
where all $\al_k\in[0,1]$ and the measure $\nu_V$ is supported in the vacuum $V$.

Take $s_1,s_2\in\R$ such that $(\rho_k,u_k)\in\supp\,\chi(s_1)\chi(s_2)$. Then, from the commutation relation \eqref{eq:Tartar}, we obtain
$$(\al_k-\al_k^2)\big(\chi(\rho_k,u_k,s_1)\sigma(\rho_k,u_k,s_2)-\chi(\rho_k,u_k,s_2)\sigma(\rho_k,u_k,s_1)\big)=0.$$
Taking $s_1$ and $s_2$ such that the second factor in this expression is non-zero, we deduce that $\al_k\in\{0,1\}$ for all $k$, and hence conclude the proof of the theorem.
\end{proof}
To conclude this section, we give the proofs of the two main technical lemmas, Lemma \ref{lemma:cancellation1} and Lemma \ref{lemma:cancellation2}. These lemmas exploit the properties of cancellation of singularities analogous to those in \cite{ChenLeFloch} and the fact that the limit of a regularised product of a measure and a BV function depends on the choice of regularisation, \it cf.~\rm \cite{DalMasoLeFlochMurat}. We exploit the representation formulae for the entropy and entropy flux kernels obtained above in \S\ref{sec:entropy} for the high density region to gain uniform control on the products in order to pass to the limit.
We first recall the following standard properties of the Dirac mass and principal value distributions (\textit{cf.}~\cite[Lemmas 3.8--3.9]{LeFlochWestdickenberg}).
\begin{lemma}\label{lemma:mollifieddistributionproductbounds}
Let $R\in C^{0,\al}_{loc}(\R)$ for some $\al\in(0,1)$ be bounded, $g\in C_c^{0,\al}(\R)$, and take $L>2$ such that $\supp\,g\subset B_{L-2}(0)$.
\begin{itemize}
\item[{\rm (i)}] Consider any pair of distributions $T_2,T_3\in\mathcal{D}'(\R)$ from the following collection of pairs:
$$(T_2,T_3)=(\de,Q_3),\quad (T_2,T_3)=(\text{\rm PV},Q_3),\quad (T_2,T_3)=(Q_2,Q_3),$$
where $Q_2,Q_3\in\{H,\text{\rm Ci},R\}$. Then there exists a constant $C>0$ such that
\beqas
\sup_{\de\in(0,1)}\Big|\int_{-\infty}^\infty&g(s_1)\big(T_2(s_2-u\pm k(\rho))T_3(s_3-u\pm k(\rho))\big)*\phi_2^\de*\phi_3^\de(s_1)\,ds_1\Big|\\
&\leq C\|g\|_{C^{0,\al}(\R)}\big(1+\|R\|_{C^{0,\al}(\overline{B_L(0)})}\big)^2.
\eeqas
\item[{\rm (ii)}] Consider now any pair of distributions from 
\beqas
&(T_2,T_3)=(\de,\de),\quad &&(T_2,T_3)=(\text{\rm PV},\text{\rm PV}),\quad &&&(T_2,T_3)=(Q_2,Q_3),\\
&(T_2,T_3)=(\de,\text{\rm PV}),\quad &&(T_2,T_3)=(\text{\rm PV},Q_3),\quad &&&(T_2,T_3)=(\de,Q_3),
\eeqas
where $Q_2,Q_3\in\{H,\text{\rm Ci},R\}$. Then there exists $C>0$ such that
\beqas
\sup_{\de\in(0,1)}\Big|\int_{-\infty}^\infty&g(s_1)\big((s_2-s_3)T_2(s_2-u\pm k(\rho))T_3(s_3-u\pm k(\rho))\big)*\phi_2^\de*\phi_3^\de(s_1)\,ds_1\Big|\\
&\leq C\|g\|_{C^{0,\al}(\R)}\big(1+\|R\|_{C^{0,\al}(\overline{B_L(0)})}\big)^2.
\eeqas
\end{itemize}
\end{lemma}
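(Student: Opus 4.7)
The plan is to reinterpret each left-hand side as the action of a tensor-product distribution $T_2(\,\cdot\,-a_\pm)\otimes T_3(\,\cdot\,-a_\pm)$, where $a_\pm := u \mp k(\rho)$, against a smooth test function, and then to exploit the Hölder regularity of $g$ in part (i) and the algebraic cancellation provided by the factor $s_2-s_3$ in part (ii). To this end I introduce
\beqs
\Psi_\de(s_2,s_3):=\int_\R g(s_1)\,\phi_2^\de(s_1-s_2)\,\phi_3^\de(s_1-s_3)\,ds_1,
\eeqs
which is smooth, supported in a fixed compact set depending only on $L$, uniformly bounded by $\|g\|_\infty$, and such that both $\Psi_\de(\cdot,s_3)$ and $\Psi_\de(s_2,\cdot)$ are bounded in $C^{0,\al}(\R)$ by $C\|g\|_{C^{0,\al}(\R)}$, all uniformly in $\de\in(0,1)$. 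By Fubini, the quantity to estimate equals the pairing of $T_2(\,\cdot\,-a_\pm)\otimes T_3(\,\cdot\,-a_\pm)$ with $\Psi_\de$.

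For part (i), I handle the three classes of pairs separately. When $T_2=\de$, evaluating the Dirac in $s_2$ reduces the expression to $\int_\R T_3(s_3-a_\pm)\Psi_\de(a_\pm,s_3)\,ds_3$; since each of $H$, $\text{\rm Ci}$ and $R$ is locally integrable on the fixed compact support in $s_3$, this is controlled by $C\|g\|_\infty(1+\|R\|_{C^{0,\al}(\overline{B_L(0)})})$. When $T_2=\text{\rm PV}$, the $s_2$-integration produces $\text{p.v.}\int (s_2-a_\pm)^{-1}\Psi_\de(s_2,s_3)\,ds_2$; writing $\Psi_\de(s_2,s_3)=\Psi_\de(a_\pm,s_3)+[\Psi_\de(s_2,s_3)-\Psi_\de(a_\pm,s_3)]$ and using Hölder continuity of $\Psi_\de(\cdot,s_3)$ gives a bound $C\|g\|_{C^{0,\al}(\R)}$ uniformly in $s_3$, after which pairing with $T_3$ closes the estimate. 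When both $T_j\in\{H,\text{\rm Ci},R\}$, the kernels are locally bounded and the estimate is immediate.

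For part (ii), the central observation is the identity
\beqs
s_2-s_3=(s_2-a_\pm)-(s_3-a_\pm),
\eeqs
combined with the standard distributional relations $(s-a)\de(s-a)=0$ and $(s-a)\text{\rm PV}(s-a)=1$. These identities collapse the worst combinations: the product $(s_2-s_3)\de(s_2-a_\pm)\de(s_3-a_\pm)$ vanishes identically even after mollification; $(s_2-s_3)\text{\rm PV}(s_2-a_\pm)\text{\rm PV}(s_3-a_\pm)$ reduces to the single-variable combination $\text{\rm PV}(s_3-a_\pm)-\text{\rm PV}(s_2-a_\pm)$; and $(s_2-s_3)\de(s_2-a_\pm)\text{\rm PV}(s_3-a_\pm)$ reduces to $-\de(s_2-a_\pm)$ plus a Dirac-times-PV contribution of the form already controlled by part (i). After these reductions each remaining object is a single-variable distribution (or tensor product with a locally bounded kernel), and the bounds from part (i) apply. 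Pairs involving at least one kernel $Q_j\in\{H,\text{\rm Ci},R\}$ are handled analogously, absorbing the extra linear factor into the neighbouring singular distribution and then invoking part (i).

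The main obstacle will be justifying the algebraic reductions of part (ii) rigorously at the level of mollified objects: the identities $(s-a)\de(s-a)=0$ and $(s-a)\text{\rm PV}(s-a)=1$ are distributional, so one must commute the factor $s_2-s_3$ with the two mollifications via Fubini, split it according to the displayed decomposition, and apply each identity slice-by-slice in $s_2$ or $s_3$ before invoking the bounds from part (i). Once this bookkeeping is completed, all constants depend only on $L$, $\|g\|_{C^{0,\al}(\R)}$, and $\|R\|_{C^{0,\al}(\overline{B_L(0)})}$, with the precise multiplicative structure claimed.
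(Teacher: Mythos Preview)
The paper does not give its own proof of this lemma: it is stated as a recalled result with a citation to \cite[Lemmas 3.8--3.9]{LeFlochWestdickenberg}. Your proposal is correct and is precisely the argument used in that reference: one rewrites the integral as the pairing of the tensor product $T_2\otimes T_3$ against the smooth, compactly supported, uniformly $C^{0,\al}$ function $\Psi_\de(s_2,s_3)=\int g(s_1)\phi_2^\de(s_1-s_2)\phi_3^\de(s_1-s_3)\,ds_1$, and then in part (ii) exploits $s_2-s_3=(s_2-a_\pm)-(s_3-a_\pm)$ together with $(s-a)\de(s-a)=0$, $(s-a)\text{\rm PV}(s-a)=1$ to reduce each singular combination to objects already controlled in part (i).

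Two minor remarks. First, the ``obstacle'' you flag at the end is not a genuine difficulty: because the mollification is absorbed into the test function $\Psi_\de$, the distributional identities apply directly at the level of the pairing, so no commutation with mollifiers is needed. Second, your description of the $(\de,\text{\rm PV})$ case is slightly off: the reduction gives exactly $-\de(s_2-a_\pm)\otimes 1$ with no residual Dirac--PV term, but this only simplifies the bound. Neither point affects the validity of your argument.
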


Finally, we recall the properties of cancellation of singularities derived in \cite[Lemmas 4.2--4.3]{ChenLeFloch}.
\begin{prop}\label{prop:cancellation}
The mollified fractional derivatives of the entropy and entropy flux kernels satisfy the following convergence properties:
\begin{itemize}
\item[{\rm (i)}]On sets on which $\rho$ is bounded, \beqa
P_2\chi_2^\de P_3\sigma_3^\de-P_3\chi_3^\de P_2\sigma_2^\de\weakto Y(\phi_2,\phi_3)Z(\rho)\sum_\pm(K^\pm)^2\de_{s_1=u\pm k(\rho)}
\eeqa
as $\de\to0$ weakly-star in measures in $s_1$ and locally uniformly in $(\rho,u)$, where
$$Y(\phi_2,\phi_3)=\int_{-\infty}^\infty\int_{-\infty}^{s_2}\phi_2(s_2)\phi_3(s_3)-\phi_2(s_3)\phi_3(s_2)\,ds_3\,ds_2$$
and
$$Z(\rho)=(\la+1)M_\la^{-2}k(\rho)^{2\la}D(\rho),$$
where $D(\rho)$ is the coefficient of Proposition \ref{prop:coefficient}.
\item[{\rm (ii)}]There exists a H\"older continuous function $X(\rho,u,s_1)$ such that, as $\de\to0$,
\beq
\chi_1 P_j\sigma_j^\de-P_j\chi_j^\de\sigma_1\to X(\rho,u,s_1) \text{ for } j=2,3,
\eeq
uniformly in $(\rho,u,s_1)$ on sets on which $\rho$ is bounded.
\end{itemize}
\end{prop}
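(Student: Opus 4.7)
The plan is to build both statements out of the structural expansions \eqref{eq:ds+1chi}--\eqref{eq:ds+1Glambda} and \eqref{eq:ds+1sigma} from Lemma \ref{lemma:fractionalderivativeexpansions}. In those formulas $P_j\chi_j=\partial_{s_j}^{\la+1}\chi(\rho,u-s_j)$ and $P_j\sigma_j$ are written as sums of four types of distribution on $\{s_j=u\pm k(\rho)\}$ (Dirac, principal value, Heaviside, Cosine integral) with coefficients built from $a_\sharp, a_\flat, b_\sharp, b_\flat, k$, plus H\"older continuous remainders $r_\chi,r_\sigma$. Mollification replaces each singular factor $T(s_j-u\pm k(\rho))$ by its regularisation $(T*\phi_j^\de)(s_j-u\pm k(\rho))$. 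The two products $P_2\chi_2^\de P_3\sigma_3^\de$ and $P_3\chi_3^\de P_2\sigma_2^\de$ become, after expansion, finite sums of terms of the form $C_\pm(\rho)\,T_2(s_2-u\pm k(\rho))\,T_3(s_3-u\pm k(\rho))$ mollified in $s_2,s_3$ and then evaluated at $s_1$ via the convolutions $*\phi_2^\de*\phi_3^\de$.

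For part (i), I would first use Lemma \ref{lemma:mollifieddistributionproductbounds} to see that \emph{every} such mollified product remains uniformly bounded as a measure in $s_1$, locally uniformly in $(\rho,u)$, so the family is weakly-star precompact on sets where $\rho$ is bounded. Then the antisymmetric combination $P_2\chi_2^\de P_3\sigma_3^\de - P_3\chi_3^\de P_2\sigma_2^\de$ kills every term whose $(T_2,T_3)$-pair is \emph{symmetric} under exchange of the indices $2\leftrightarrow 3$. What survives is precisely the \emph{off-diagonal} singular pairings localised at $s_2=u\pm k(\rho)$ and $s_3=u\pm k(\rho)$ simultaneously; because $\phi_2\neq\phi_3$, their mollified product does not cancel but concentrates as $\de\to 0$ to $Y(\phi_2,\phi_3)\,\de_{s_1=u\pm k(\rho)}$ with coefficient given by the antisymmetric combination $a_\sharp b_\sharp - k(\rho)^2(a_\sharp b_\flat-a_\flat b_\sharp)= M_\la^{-2}D(\rho)$ up to the $(\la+1)k(\rho)^{2\la}$ factor coming from the derivative $\partial_s(sg)=g+s\partial_s g$ in \eqref{eq:ds+1sigma}. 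Invoking Proposition \ref{prop:coefficient} pins down $Z(\rho)=(\la+1)M_\la^{-2}k(\rho)^{2\la}D(\rho)>0$, while all mixed singular/regular and regular/regular products pass uniformly to H\"older limits in $s_1$ and thus do not contribute to the concentrated part. Keeping the $\pm$ branches separate gives the two Dirac masses at $s_1=u\pm k(\rho)$ with weights $(K^\pm)^2$.

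For part (ii), the crucial difference is that now $\chi_1$ and $\sigma_1$ appear \emph{without} a fractional derivative, so $s_1$ enters only as a smooth $\al$-H\"older variable with $\al\leq\la$ (cf.~Remark \ref{rmk:chiHoelderbound}), bounded locally on $\{\rho\leq\rho_{\max}\}$. Writing $\chi_1 P_j\sigma_j^\de - P_j\chi_j^\de \sigma_1 = \chi_1\,(P_j\sigma_j - \sigma_1 P_j\chi_j/\chi_1)^\de$ is not legitimate, so instead I would decompose $P_j\sigma_j - (\sigma_1/\chi_1)P_j\chi_j$ using \eqref{eq:ds+1chi}--\eqref{eq:ds+1sigma}: the combination $\chi_1 P_j\sigma_j - \sigma_1 P_j\chi_j$, evaluated \emph{before} mollification, already has its leading singular pieces (Dirac and principal value at $s_j=u\pm k(\rho)$) cancel because $\chi$ and $\sigma-u\chi$ share the same characteristic singularities with coefficients differing only by the factor $(s-u)$ which vanishes precisely on the singular locus. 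What remains is a distribution of order zero (Heaviside and Cosine-integral pieces with Hölder coefficients), so its mollification in $s_j$ and integration against the smooth factor converges uniformly as $\de\to 0$ to the H\"older limit $X(\rho,u,s_1)$, independent of $j=2,3$ by symmetry.

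The main obstacle is the precise identification of the surviving antisymmetric coefficient in part (i): one must keep track of the $\pm$ branch, the factor $(s-u)$ coming from the entropy flux, and the coefficient $(\la+1)$ produced by $\partial_s^{\la+1}(s\,g)=s\partial_s^{\la+1}g+(\la+1)\partial_s^\la g$, and show that after summation over the four distributional types $(\de,\text{PV},H,\text{Ci})$ exactly the single product $a_\sharp b_\sharp - k^2(a_\sharp b_\flat-a_\flat b_\sharp)$ survives. Once this algebraic cancellation is in place, Lemma \ref{lemma:mollifieddistributionproductbounds} furnishes the required uniform bounds to justify passage to the weak-star limit and to upgrade the convergence to be locally uniform in $(\rho,u)$.
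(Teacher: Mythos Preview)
First, note that the paper does not supply its own proof of this proposition: it is recalled verbatim from \cite[Lemmas 4.2--4.3]{ChenLeFloch}. Your outline is in the spirit of that argument, but there is one genuine error and one point where the mechanism is misidentified.

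In part (ii) you claim that the leading Dirac and principal value singularities cancel because the factor $(s-u)$ ``vanishes precisely on the singular locus''. This is false: the singular locus is $\{s_j=u\pm k(\rho)\}$, where $s_j-u=\pm k(\rho)\neq 0$. The actual cancellation is a matching of coefficients. Near $s_1=u+k(\rho)$, both $\chi_1$ and $(\sigma_1-u\chi_1)$ vanish like $[k^2-(u-s_1)^2]_+^\la$, with leading coefficients $a_\sharp(\rho)$ and $k(\rho)b_\sharp(\rho)$ respectively (from \eqref{eq:entropyexpansion}--\eqref{eq:fluxexpansion}); meanwhile the Dirac masses in $P_j\chi_j$ and $P_j(\sigma_j-u\chi_j)$ carry coefficients proportional to $a_\sharp(\rho)$ and $k(\rho)b_\sharp(\rho)$. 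The cross-product $a_\sharp\cdot k b_\sharp$ therefore appears identically in both terms of $\chi_1 P_j(\sigma_j-u\chi_j)-(\sigma_1-u\chi_1)P_j\chi_j$, and \emph{this} coincidence is what removes the $\de^{\la-1}$ blow-up and leaves a uniformly convergent H\"older remainder. Without identifying this structure, the uniform convergence claim has no support.

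For part (i), your description of the antisymmetrisation is roughly right but the mechanism is slightly off. The $\de\times\de$ pairings do cancel, but not merely by index-symmetry: the factor $(s_j-u)=\pm k(\rho)$ on the Dirac support makes both contributions identical. What actually generates the surviving Dirac mass at $s_1=u\pm k(\rho)$ with weight $Y(\phi_2,\phi_3)$ is the cross-pairing of the Dirac in one factor against the Heaviside arising from the $(\la+1)\partial_s^\la G_\la$ term in the other (cf.~\eqref{eq:dsGlambda}, \eqref{eq:ds+1sigma}); the limit of $(\de*\phi_2^\de)(H*\phi_3^\de)-(H*\phi_2^\de)(\de*\phi_3^\de)$ produces exactly $Y(\phi_2,\phi_3)\de_{s_1=u\pm k(\rho)}$, and the $\rho$-dependent prefactor is where $Z(\rho)$ emerges. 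One must also check that the $\de\times\text{PV}$ and $\text{PV}\times H$ cross-terms combine only into H\"older remainders, which requires the finer bookkeeping of \cite[Lemma 4.2]{ChenLeFloch} rather than a blanket symmetry argument.
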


\begin{proof}[Proof of Lemma \ref{lemma:cancellation1}]
Let $\psi(s_1)\in C_c^\infty(\R)$. From Proposition \ref{prop:cancellation}(i), when $\rho$ is bounded,
\beqas
\lim_{\de\to0}\int_{-\infty}^\infty&\overline{\chi(s_1)}(P_2\chi_2^\de P_3\sigma_3^\de-P_3\chi_3^\de P_2\sigma_2^\de)\psi(s_1)\,ds_1\\
 =&\,Y(\phi_2,\phi_3)Z(\rho)\sum_\pm(K^\pm)^2\overline{\chi(u\pm  k(\rho))}\psi(u\pm k(\rho))
\eeqas
locally uniformly in $(\rho,u)$, and hence pointwise for all $(\rho,u)$.
Therefore, for any $\rho_*>0$,
\beqas
\lim_{\de\to0}\int_{-\infty}^\infty&\overline{\chi(s_1)}\big\langle\nu,(P_2\chi_2^\de P_3\sigma_3^\de-P_3\chi_3^\de P_2\sigma_2^\de)\mathbb{1}_{\rho\leq\rho_*}\big\rangle\psi(s_1)\,ds_1\\
=&\lim_{\de\to0}\bigg\langle\nu,\int_{-\infty}^\infty\overline{\chi(s_1)}(P_2\chi_2^\de P_3\sigma_3^\de-P_3\chi_3^\de P_2\sigma_2^\de)\psi(s_1)\,ds_1\mathbb{1}_{\rho\leq\rho_*}\bigg\rangle\\
=&\,\bigg\langle\nu,Y(\phi_2,\phi_3)Z(\rho)\sum_\pm(K^\pm)^2\overline{\chi(u\pm  k(\rho))}\psi(u\pm k(\rho))\mathbb{1}_{\rho\leq\rho_*}\bigg\rangle\\
=&\,Y(\phi_2,\phi_3)\sum_\pm(K^\pm)^2\big\langle\nu,Z(\rho)\overline{\chi(u\pm  k(\rho))}\psi(u\pm k(\rho))\mathbb{1}_{\rho\leq\rho_*}\big\rangle.
\eeqas
 It therefore suffices to show that 
\beq\label{claim:punguin}
\Big|\int_{-\infty}^\infty\overline{\chi(s_1)}(P_2\chi_2^\de P_3\sigma_3^\de-P_3\chi_3^\de P_2\sigma_2^\de)\psi(s_1)\,ds_1\mathbb{1}_{\rho>\rho_*}\Big|\leq C(\rho^2+1)
\eeq
for some $C>0$ independent of $\rho$, $u$ and $\de$. We then apply the dominated convergence theorem to pass the pointwise limit inside the Young measure (as $\rho^2\in L^1(\mathcal{H},\nu)$).

We first observe that we may expand 
$$P_2\chi_2^\de P_3\sigma_3^\de-P_3\chi_3^\de P_2\sigma_2^\de=P_2\chi_2^\de P_3(\sigma_3^\de-u\chi_3^\de)-P_3\chi_3^\de P_2(\sigma_2^\de-u\chi_2^\de).$$
Applying now Lemma \ref{lemma:fractionalderivativeexpansions}, we see that this product consists of a sum of terms
$$A_{j,\pm}(\rho)B_{j,\pm}(\rho)(s_2-s_3)T_2(s_2-u\pm k(\rho))T_3(s_3-u\pm k(\rho)),$$
where $T_2,T_3\in\{\de,\text{\rm PV},H,\text{\rm Ci}\}$, and terms of the form
$$A_{j,\pm}(\rho)B_{j,\pm}(\rho)T_2(s_2-u\pm k(\rho))T_3(s_3-u\pm k(\rho)),$$
where $T_2\in\{\de,H,\text{\rm PV},\text{\rm Ci},r_\chi\}$ and $T_3\in\{H,\text{\rm Ci},r_\sigma\}$ and likewise with $s_2$ and $s_3$ reversed.

Applying Lemma \ref{lemma:mollifieddistributionproductbounds}(ii) yields, for any pair $T_2,T_3\in\{\de,\text{\rm PV},H,\text{\rm Ci}\}$, 
\beqa\label{ineq:distributionbounds1}
\Big|\int_{-\infty}^\infty&\overline{\chi(s_1)}\psi(s_1)\big((s_2-s_3)T_2(s_2-u\pm k(\rho))T_3(s_3-u\pm k(\rho))\big)*\phi_2^\de*\phi_3^\de(s_1)\,ds_1\Big|\\
&\leq C\|\overline{\chi}\psi\|_{C^{0,\al}(\R)},
\eeqa
where we note that $s\mapsto\overline{\chi(s)}$ is H\"older continuous. Likewise,  Lemma \ref{lemma:mollifieddistributionproductbounds}(i) gives
\beqa\label{ineq:distributionbounds2}
\Big|\int_{-\infty}^\infty&\overline{\chi(s_1)}\psi(s_1)\big(T_2(s_2-u\pm k(\rho))T_3(s_3-u\pm k(\rho))\big)*\phi_2^\de*\phi_3^\de(s_1)\,ds_1\Big|\\
&\leq C\|\overline{\chi}\psi\|_{C^{0,\al}(\R)}\big(1+\|r_\chi\|_{C_{s_1}^{0,\al}(\overline{B_R})}+\|r_\sigma\|_{C_{s_1}^{0,\al}(\overline{B_R})}\big),
\eeqa
for $T_2\in\{\de,H,\text{\rm PV},\text{\rm Ci},r_\chi\}$ and $T_3\in\{H,\text{\rm Ci},r_\sigma\}$. Here $R>0$ is such that $\supp\,\psi(s)\subset B_{R-2}(0)$ and the terms involving $r_\chi$ and $r_\sigma$ occur only if one of $T_2,T_3\in\{r_\chi,r_\sigma\}$.

Applying \eqref{ineq:distributionbounds1}--\eqref{ineq:distributionbounds2}, we therefore find 
\beqas
\Big|\int_{-\infty}^\infty&\overline{\chi(s_1)}(P_2\chi_2^\de P_3\sigma_3^\de-P_3\chi_3^\de P_2\sigma_2^\de)\psi(s_1)\,ds_1\Big|\\
\leq&\,C\max_{j,k,\pm}\{|A_{j,\pm}B_{k,\pm}|,|A_{j,\pm}|\|r_\chi\|_{C_{s_1}^{0,\al}(\overline{B_R})},|B_{j,\pm}|\|r_\sigma\|_{C_{s_1}^{0,\al}(\overline{B_R})}\} \leq\,C(\rho^2+1)
\eeqas
by Lemma \ref{lemma:fractionalderivativeexpansions}, proving the necessary claim, \eqref{claim:punguin}.
\end{proof}

\begin{proof}[Proof of Lemma \ref{lemma:cancellation2}]\let\oldqed\qed 
\let\qed\relax
Let $\psi(s_1)\in C_c^\infty(\R)$. For fixed $(\rho,u)\in\H$, from Proposition \ref{prop:cancellation}(ii), we first obtain, for fixed $(\rho,u)$,  the uniform in $s_1$ convergence
$$(\chi_1P_3\sigma_3^\de-P_3\chi_3^\de\sigma_1) (\rho,u,s_1)\to X(\rho,u,s_1),$$
and hence, since 
\begin{equation*}
\begin{aligned}\int_\R\overline{P_2\chi_2^\de}(\chi_1P_3\sigma_3^\de&-P_3\chi_3^\de\sigma_1) \psi(s_1)\,ds_1 \\
&=\int_{\mathcal{H}}\int_\mathbb{R} P_2\chi_2^\de(\tilde\rho,\tilde u,s_1)(\chi_1P_3\sigma_3^\de-P_3\chi_3^\de\sigma_1)(\rho,u,s_1) \psi(s_1)\,ds_1 \,d\nu(\tilde\rho,\tilde u),
\end{aligned}
\end{equation*}
we find that $\int_\R\overline{P_2\chi_2^\de}(\chi_1P_3\sigma_3^\de-P_3\chi_3^\de\sigma_1)\psi(s_1)\,ds_1\to\int_\mathcal{H}\langle P_1\chi_1(\tilde\rho,\tilde u,\cdot),X(\rho,u,\cdot)\psi(\cdot)\rangle\,d\nu(\tilde\rho,\tilde u)$ pointwise in $(\rho,u)$ as $\de\to0$. Note that the inner product $\langle\cdot,\cdot\rangle$ is, in a slight abuse of notation, the duality pairing of measures and continuous functions (recall the principal value distribution acts properly on H\"older functions). To pass to the limit, we have used that $P_j\chi_j^\de$, $j=2,3$, are measures in $s_1$ such that $\|P_j\chi_j^\de(\rho,u,\cdot)\|_\mathcal{M}\leq C\rho$ to pass the limit inside the Young measure.

\vspace{2mm}
\begin{claim}
There exists $C>0$, independent of $\de$, such that
\beq\label{ineq:cancellationbounds}
\Big|\int_\R\overline{P_2\chi_2^\de}(\chi_1P_3\sigma_3^\de-P_3\chi_3^\de\sigma_1)\psi(s_1)\,ds_1\Big|\leq C(\rho^2+1).
\eeq
\end{claim}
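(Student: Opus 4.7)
The plan is to adapt the proof of \eqref{claim:punguin} in Lemma \ref{lemma:cancellation1}, while accounting for the fact that the ``outer'' factor $\overline{P_2\chi_2^\de}$ is an average over $\nu$ in auxiliary variables $(\tilde\rho,\tilde u)$ that are independent of the $(\rho,u)$ inside the bracket. As a first reduction, I would exploit the Galilean invariance of $\sigma-u\chi$ to rewrite
$$
\chi_1P_3\sigma_3^\de-P_3\chi_3^\de\,\sigma_1 \;=\; \chi_1\,P_3(\sigma-u\chi)_3^\de\;-\;P_3\chi_3^\de\,(\sigma-u\chi)_1,
$$
so that both of the factors carrying a $(\la+1)$-fractional derivative are amenable to Lemma \ref{lemma:fractionalderivativeexpansions}. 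Swapping integration order via Fubini then gives
$$
\int_\R \overline{P_2\chi_2^\de}(s_1)\big(\chi_1P_3\sigma_3^\de-P_3\chi_3^\de\sigma_1\big)\psi(s_1)\,ds_1 \;=\; \int_\mathcal{H} I_\de(\tilde\rho,\tilde u;\rho,u)\,d\nu(\tilde\rho,\tilde u),
$$
where $I_\de(\tilde\rho,\tilde u;\rho,u):=\int_\R \psi(s_1) P_2\chi_2^\de(\tilde\rho,\tilde u,s_1)\big(\chi_1P_3\sigma_3^\de-P_3\chi_3^\de\sigma_1\big)(\rho,u,s_1)\,ds_1$. It then suffices to produce a $\de$-independent bound on $I_\de$ that is integrable against $\nu$ in $(\tilde\rho,\tilde u)$ and of size $\rho^2+1$ in the outer variables.

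Substituting the expansions \eqref{eq:chifractionalderivative}--\eqref{eq:sigmafractionalderivative} into each of $P_2\chi$, $P_3\chi$ and $P_3(\sigma-u\chi)$ decomposes $I_\de$ into a finite sum of terms of the shape
$$
\text{coeff}(\tilde\rho)\cdot\text{coeff}(\rho)\int_\R\psi(s_1)\,A(\rho,u,s_1)\big[T_2(\cdot-\tilde u\mp k(\tilde\rho))\ast\phi_2^\de\big](s_1)\big[T_3(\cdot-u\mp k(\rho))\ast\phi_3^\de\big](s_1)\,ds_1,
$$
with $T_2,T_3\in\{\delta,H,\text{\rm PV},\text{\rm Ci},r_\chi,r_\sigma\}$ and the H\"older factor $A(\rho,u,\cdot)\in\{\chi(\rho,u,\cdot),\,(\sigma-u\chi)(\rho,u,\cdot)\}$; in the summands coming from $P_3(\sigma-u\chi)$ there is moreover a prefactor $(s_3-u)$ which, after mollification, becomes $(s_1-u)+O(\de)$ and is therefore bounded on $\supp\psi$. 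Lemma \ref{lemma:mollifieddistributionproductbounds} depends on the pair $(T_2,T_3)$ only through the H\"older norm of the test integrand and through the H\"older norms of the bounded remainders, and its proof is insensitive to the locations of the two translations; a straightforward adaptation therefore bounds each such integral, uniformly in $\de$, by
$$
C\,\|\psi A(\rho,u,\cdot)\|_{C^{0,\al}}\big(1+\|r_\chi(\rho,u,\cdot)\|_{C^{0,\al}}+\|r_\sigma(\rho,u,\cdot)\|_{C^{0,\al}}\big)^2.
$$

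Combining the coefficient bound $|A_{j,\pm}(\rho)|+|B_{j,\pm}(\rho)|\leq C\sqrt{\rho}\log\rho$ from Lemma \ref{lemma:fractionalderivativeexpansions} with the high-density H\"older estimate $\|\chi(\rho,u,\cdot)\|_{C^{0,\la}}\leq C\rho/\sqrt{\log\rho}$ of Remark \ref{rmk:chiHoelderbound} (together with the analogous bound for $\sigma-u\chi$, obtainable from the convolution representation of Theorem \ref{thm:entropykernelexpansionshighdensity}) yields a per-summand bound of order
$$
|I_\de|\;\leq\; C\,\sqrt{\tilde\rho}\log\tilde\rho\;\cdot\;\sqrt{\rho}\log\rho\;\cdot\;\frac{\rho}{\sqrt{\log\rho}}\;\leq\; C\sqrt{\tilde\rho}\log\tilde\rho\,(\rho^2+1),
$$
with the remainder contributions controlled by $\sqrt{\tilde\rho}\log\tilde\rho\cdot\rho^2$. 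Since $\sqrt{\tilde\rho}\log\tilde\rho\leq c\tilde\rho^2+c$ and $\tilde\rho^2\in L^1(\mathcal{H},\nu)$, the outer average in $(\tilde\rho,\tilde u)$ is a finite constant that may be absorbed into $C$, giving \eqref{ineq:cancellationbounds}.

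The principal obstacle is organisational rather than conceptual: because the two translations $\tilde u\pm k(\tilde\rho)$ and $u\pm k(\rho)$ are unrelated, none of the singularity-cancellation mechanisms of Proposition \ref{prop:cancellation} that were decisive in Lemma \ref{lemma:cancellation1} are available here, and every distributional pairing must be dominated on its own. The delicate point is to pair the $\log\rho$ appearing in each coefficient with the $1/\sqrt{\log\rho}$ appearing in the H\"older norm of $A$, so that the final power of $\rho$ is no worse than $\rho^{3/2}\sqrt{\log\rho}\lesssim \rho^2+1$; the analogous bookkeeping for the contributions involving $r_\chi$ and $r_\sigma$ is more forgiving since those remainders are already of pure polynomial size $\rho$.
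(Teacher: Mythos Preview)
Your approach has a genuine gap. You propose to expand all three factors $P_2\chi$, $P_3\chi$, $P_3(\sigma-u\chi)$ via Lemma~\ref{lemma:fractionalderivativeexpansions} and treat the undifferentiated factor $A\in\{\chi_1,(\sigma-u\chi)_1\}$ as the H\"older integrand in Lemma~\ref{lemma:mollifieddistributionproductbounds}. But this produces pairings $(T_2,T_3)$ with \emph{both} $T_2$ and $T_3$ in $\{\de,\text{\rm PV}\}$: for instance, the Dirac term $A_{1,\pm}(\tilde\rho)\de(\cdot-\tilde u\mp k(\tilde\rho))$ from $P_2\chi_2^\de$ multiplied against the Dirac term $A_{1,\pm}(\rho)\de(\cdot-u\mp k(\rho))$ from $P_3\chi_3^\de$, with H\"older factor $(\sigma-u\chi)_1$. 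Lemma~\ref{lemma:mollifieddistributionproductbounds}(i) explicitly excludes these pairings, and part (ii) requires the cancellation factor $(s_2-s_3)$, which your decomposition does not supply (the prefactor $(s_3-u)$ you note from \eqref{eq:sigmafractionalderivative} is not the same thing). A direct computation shows that such a product of mollified Diracs, evaluated where the two centres $\tilde u\pm k(\tilde\rho)$ and $u\pm k(\rho)$ happen to coincide, is of order $\de^{-1}$ after integration in $s_1$; your claimed ``straightforward adaptation'' is therefore false, and the bound is not uniform in $\de$.

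The missing idea is precisely the cancellation you dismiss in your final paragraph. The paper does not bound the two summands $\chi_1P_3(\sigma-u\chi)_3^\de$ and $(\sigma-u\chi)_1P_3\chi_3^\de$ separately; instead it uses the Chen--LeFloch decomposition of their \emph{difference} into $E^{I,\de}+E^{II,\de}+E^{III,\de}$. At leading order the coefficients $a_\sharp,b_\sharp$ of $\chi_1,(\sigma-u\chi)_1$ align with those of $P_3\chi_3^\de,P_3(\sigma-u\chi)_3^\de$ so that the $\de$ and $\text{\rm PV}$ contributions in $E^{I,\de}$ acquire a factor $(s_1-s_3)$, which tames the mollified Dirac and principal value \emph{pointwise} in $s_1$; the higher-order piece $E^{II,\de}$ is handled using the Lipschitz bound \eqref{ineq:e^II}. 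This yields $|\chi_1P_3\sigma_3^\de-P_3\chi_3^\de\sigma_1|\leq C(\rho^2+1)$ pointwise and uniformly in $\de$, after which the outer factor $\overline{P_2\chi_2^\de}\psi$ can be integrated directly. The cancellation is not optional bookkeeping---it is what makes the estimate close.
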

\vspace{-2mm}
Assuming the bound of the claim, we may apply the dominated convergence theorem again to pass to the limit inside the Young measure with respect to $(\rho,u)$, i.e.~we obtain
\beqa
\lim_{\de\to0}&\int_\R\overline{P_2\chi_2^\de(s_1)}\,\overline{(\chi_1P_3\sigma_3^\de-P_3\chi_3^\de\sigma_1)(s_1)}\psi(s_1)\,ds_1\\
=&\lim_{\de\to0}\int_{\mathcal{H}}\int_\R\overline{P_2\chi_2^\de(s_1)}(\chi_1P_3\sigma_3^\de-P_3\chi_3^\de\sigma_1)(\rho,u,s_1)\psi(s_1)\,ds_1\,d\nu(\rho,u),\\
=&\int_{\mathcal{H}}\int_{\mathcal{H}}\langle P_1\chi_1(\tilde\rho,\tilde u,\cdot),X(\rho,u,\cdot)\psi(\cdot)\rangle\,d\nu(\tilde\rho,\tilde u)\,d\nu(\rho,u).
\eeqa
As the limit is independent of the choice of mollifying functions $\phi_2$ and $\phi_3$, we may interchange the roles of $s_2$ and $s_3$ and so conclude the proof of the lemma.

It remains only to prove the bound \eqref{ineq:cancellationbounds} of the Claim.\vspace{-4mm}
\begin{proof}[Proof of claim]
We begin by observing that, for $j=2,3$, $\overline{P_j\chi_j^\de(s_1)}$ is independent of $\rho$ and $u$, as is $\psi(s_1)$. We therefore examine the function
$$\chi_1P_j\sigma_j^\de-P_j\chi_j^\de\sigma_1=\chi_1P_j(\sigma_j^\de-u\chi_j^\de)-(\sigma_1-u\chi_1)P_j\chi_j^\de.$$
We recall from \cite[Proof of Lemma 4.2]{ChenLeFloch} that this expression may be decomposed as a sum $E^{I,\de}+E^{II,\de}+E^{III,\de}$, where
\beqa
E^{I,\de}=&\sum_\pm A_{\de,\pm}^{I}(\rho)e_\de^I(\rho,u-s_1)\big((s_1-s_j)\de(s_j-u\pm k(\rho))\big)*\phi_j^\de(s_1)\\
&+\sum_\pm A_{\text{\rm PV},\pm}^{II}(\rho)e_{\text{\rm PV}}^I(\rho,u-s_1)\big((s_1-s_j)\text{\rm PV}(s_j-u\pm k(\rho))\big)*\phi_j^\de(s_1),\\
E^{II,\de}=&\sum_\pm A_{\de,\pm}^{II}(\rho)e_\de^{II}(\rho,u-s_1)\big((s_j-u)\de(s_j-u\pm k(\rho))\big)*\phi_j^\de(s_1)\\
&+\sum_\pm A_{\text{\rm PV},\pm}^{II}(\rho)e_{\text{\rm PV}}^{II}(\rho,u-s_1)\big((s_j-u)\text{\rm PV}(s_j-u\pm k(\rho))\big)*\phi_j^\de(s_1),
\eeqa
and where $E^{III,\de}$ is obtained by the mollification of H\"older continuous functions and the functions $e^I_{\de,\text{\rm PV}}(\rho,u-s_1)$ and $e^{II}_{\de,\text{\rm PV}}(\rho,u-s_1)$ are respectively leading order and higher order terms in the expansions \eqref{eq:entropyexpansion} and \eqref{eq:fluxexpansion} for $\chi(\rho,u-s_1)$ and $(\sigma-u\chi)(\rho,u-s_1)$.

Considering now the expansions of Theorem \ref{thm:entropykernelexpansionshighdensity} and Lemma \ref{lemma:fractionalderivativeexpansions}, we find that the coefficients $A_{\de,\text{\rm PV},\pm}^{I,II}$ may be bounded by
\beq\label{ineq:A^k_de,PV}
\sum_\pm|A_{\de,\pm}^{I,II}(\rho)|+|A_{\text{\rm PV},\pm}^{I,II}(\rho)|\leq C\sqrt{\rho}\log\rho,
\eeq and $e^I_{\de,\text{\rm PV}}(\rho,u-s_1)$ and $e^{II}_{\de,\text{\rm PV}}(\rho,u-s_1)$ satisfy
\begin{align}
&|e^I_{\de,\text{\rm PV}}(\rho,u-s_1)|\leq C\rho,\label{ineq:e^I}\\
&|e^{II}_{\de,\text{\rm PV}}(\rho,u-s_1)|+|\partial_{s_1}e^{II}_{\de,\text{\rm PV}}(\rho,u-s_1)|\leq C\rho.\label{ineq:e^II}
\end{align}
Moreover, for $\rho\geq \rho_*$, the H\"older continuous term $E^{III,\de}$ is uniformly bounded by $|E^{III,\de}|\leq C\rho^2$.

The term  $E^{I,\de}(\rho,u-s_1)$ contains products of Dirac masses with H\"older functions and products of principal value distributions with H\"older functions. Considering a typical term of the first type, we use \eqref{ineq:A^k_de,PV}--\eqref{ineq:e^I} to bound
\beqas
&|A^{I}_{\de,+}(\rho)e^I_\de(\rho,u-s_1)(s_1-(u+k))\phi_j^\de(s_1-(u+k))|\\
&\leq C\rho^{3/2}\log\rho|s_1-(u+k)|\de^{-1}\phi_j\big(\frac{s_1-(u+k)}{\de}\big)\leq C(\rho^2+1),
\eeqas
where we have used that $\supp\,\phi_j\subset(-1,1)$.

A typical principal value term is bounded, using \eqref{ineq:A^k_de,PV}--\eqref{ineq:e^I}, by
\beqas
|A^{I}_{\text{\rm PV},+}(\rho)e^I_\de(\rho,&u-s_1)(s_j-s_1)\text{\rm PV}(s_j-(u+k))*\phi_j^\de(s_1)|\\
\leq C\rho^{3/2}\log\rho\big(&|(s_1-(u+k))\text{\rm PV}(s_j-(u+k))*\phi_j^\de(s_1)|\\
&+|(s_j-(u+k))\text{\rm PV}(s_j-(u+k))*\phi_j^\de(s_1)|\big).
\eeqas
Observe that the second term is the mollified distributional constant function $1$, and hence is bounded independent of $\de>0$. For the first term, we note that, by definition,
\beqas
|\text{\rm PV}*\phi_j^\de(x)|=&\,\Big|\int_0^\infty\frac{\phi_j^\de(x-y)-\phi_j^\de(y-x)}{y}\,dy\Big|
=\,\de^{-1}\Big|\int_0^\infty y^{-1}\big(\phi_j\big(\frac{x-y}{\de}\big)-\phi_j\big(\frac{y-x}{\de}\big)\big)\,dy\Big|.
\eeqas
Now if $|x|\leq 2\de$, this is controlled by
$$|\text{\rm PV}*\phi_j^\de(x)|\leq \de^{-1}\int_0^{4\de}|y|^{-1}\big|\phi_j\big(\frac{x-y}{\de}\big)-\phi_j\big(\frac{y-x}{\de}\big)\big|\,dy\leq C\de\|\phi'\|_{L^\infty}\de^{-2}=C\frac{1}{\de}.$$
On the other hand, if $|x|\geq 2\de$, we obtain a bound of
$$|\text{\rm PV}*\phi_j^\de(x)|\leq\de^{-1}\int_{|x|-\de}^{|x|+\de}\frac{2\|\phi\|_{L^\infty}}{|x|-\de}\,dy\leq C\frac{1}{|x|},$$
independent of $\de>0$.
Hence we obtain a bound of
$$|(s_1-(u+k))\text{\rm PV}(s_j-(u+k))*\phi_j^\de(s_1)|\leq C\Big(\frac{1}{\de}\cdot 2\de+\frac{|s_1-(u+k)|}{|s_1-(u+k)|}\Big)\leq C,$$
independent of $\de>0$, giving
\beqas
&|A^{I}_{\text{\rm PV},+}(\rho)e^I_\de(\rho,u-s_1)(s_j-s_1)\text{\rm PV}(s_j-(u+k))*\phi_j^\de(s_1)|\leq C\rho^{3/2}\log\rho\leq C(\rho^2+1).
\eeqas
To bound $E^{II,\de}$, we see evaluating the terms produced by Dirac masses gives terms of the form
\beqas
|A^{II}_{\de,+}(\rho)||e^{II}_{\de,+}(\rho,u-s_1)|k(\rho)\de^{-1}\phi_j\big(\frac{s_1-(u+k)}{\de}\big)\leq C\rho^{3/2}(\log\rho)^2\|\phi\|_{L^\infty}
\eeqas
as $\supp\,\phi\subset(-1,1)$ and $e^{II}_{\de,+}$ satisfies \eqref{ineq:e^II} and $\supp\,e^{II}_{\de,+}\subset \{|u-s_1|\leq k(\rho)\}$.

Moreover, for the final terms, we add and subtract $k(\rho)$ to the factor $s_j-u$ to see
\beqas
&|A^{II}_{\text{\rm PV},+}(\rho)e^{II}_{\text{\rm PV},+}(\rho,u-s_1)(s_j-u-k+k)\text{\rm PV}(s_j-(u+k))*\phi_j^\de(s_1)|\\
&\leq C\rho^{3/2}\log\rho+C\rho^{1/2}k(\rho)\log\rho |e^{II}_{\text{\rm PV},+}(\rho,u-s_1)||\text{\rm PV}(s_j-(u+k))*\phi_j^\de(s_1)|\\
&\leq C\rho^{1/2}\log\rho\big(\rho+C\log\rho\cdot\rho\big)\leq C(\rho^2+1), 
\eeqas
where we have bounded the principal value as before and used the Lipschitz bound \eqref{ineq:e^II}, concluding the proof of the claim, and hence the lemma.\let\qed\oldqed
\end{proof}
\end{proof}

\section{Proof of the main result}\label{sec:finalproof}
We now apply the framework of \S\ref{sec:Youngmeasureframework}--\S\ref{sec:generalreduction} to the approximate solutions of the Navier-Stokes equations to conclude the proof of Theorem \ref{thm:Navier-Stokes-limit}. 

We begin by ensuring the initial approximate density is strictly positive by taking the cut-off $\max\{\rho_0,\eps^\half\}$. Mollifying this function and $u_0$ at a suitable scale, we obtain approximate initial data $(\rho_0^\eps,u_0^\eps)$ satisfying the assumptions of Remark \ref{rmk:assumptions}. By Theorem \ref{thm:Hoff}, we obtain a sequence of smooth solutions to the Navier-Stokes equations \eqref{eq:NS} with initial data $(\rho_0^\eps, u_0^\eps)$ satisfying the estimates of \S\ref{sec:NSenergyestimates}. We therefore apply the construction of \S\ref{sec:Youngmeasureframework} to deduce the existence of a Young measure solution $\nu_{t,x}$ to the Euler equations, constrained by the Tartar commutation relation, as in Proposition \ref{prop:commutation}. Applying  the reduction of support theorem, Theorem \ref{thm:reduction}, we deduce that for almost every $(t,x)\in\R^2_+$, the Young measure $\nu_{t,x}$ is either a point mass or is supported in the vacuum set $V$. Moving  to the $(\rho,m)$ coordinates, $m=\rho u$, such a measure is a Dirac mass, $\nu_{t,x}=\de_{(\rho(t,x),m(t,x))}$. Thus we conclude that the approximate solutions converge (up to subsequence) $(\rho^\eps,\rho^\eps u^\eps)\to(\rho,m)$ for almost every $(t,x)\in\R^2_+$ and also in $L^p_{loc}(\R^2_+)\times L^q_{loc}(\R^2_+)$ for $p\in[1,2)$ and $q\in [1,\frac{3}{2})$.

This strong convergence is, in particular, enough to pass to the limit in the relative energy, 
$$\overline{\eta^*}(\rho^\eps,m^\eps)\to\overline{\eta^*}(\rho,m) \text{ in }L^1_{loc}(\R^2_+).$$
Hence we deduce from the main energy estimate, Lemma \ref{lemma:mainenergyestimate}, that, for any $t_1<t_2$,
\begin{equation*}
 \int_{t_1}^{t_2}\int_\mathbb{R}\overline{\eta^*}(\rho,m)(t,x)\,dx\,dt\leq M(t_2-t_1)\int_\mathbb{R}\overline{\eta^*}(\rho_0,m_0)(x)\,dx+M,
\end{equation*}
so that, for almost every $t\geq 0$,
\begin{equation*}
 \int_\mathbb{R}\overline{\eta^*}(\rho,m)(t,x)\,dx\leq M\int_\mathbb{R}\overline{\eta^*}(\rho_0,m_0)(x)\,dx+M.
 \end{equation*}
 From the strong convergence of $(\rho^\eps, m^\eps)$, $(\rho,m)$ satisfies \eqref{eq:Eulerweakform}, hence is a weak solution of the Euler equations.
To verify part (iii) of Definition \ref{def:finite-energy-entropy-sol}, we note from \eqref{eq:chiintegralidentity} that there exists a function $\mathcal{X}$ such that $\mathcal{X}_s(\rho,u-s)=\big(\rho\chi_\rho-\chi\big)(\rho,u-s)$ and observe from \eqref{eq:NSentropydissipation} that for any $\psi\in C^2_c(\R)$,
\beqa\label{eq:6.1}
\int_\R \big(\chi(\rho^\eps,u^\eps-s)_t&+\sigma(\rho^\eps,u^\eps,s)_x\big)\psi(s)\,ds=\eps(\eta^\psi_m(\rho^\eps,m^\eps)_x)_x\\
&-\eps\int_\R \frac{1}{\rho^\eps}\chi(\rho^\eps)\psi''(s) |u^\eps_x|^2+\frac{1}{(\rho^\eps)^2}\mathcal{X}(\rho^\eps,u^\eps-s)\psi''(s)\rho^\eps_x u^\eps_x \,ds.
\eeqa
Passing $\eps\to0$ by the uniform energy estimates Lemma \ref{lemma:mainenergyestimate} and Lemma \ref{lemma:densityderivativeestimate}, we obtain \eqref{eq:kinetic} after noting that $\int_\R\mathcal{X}(\rho,u-s)\,ds=0$ as $\mathcal{X}$ is odd and compactly supported in $s$ for any fixed $(\rho,u)$. Thus $(\rho,m)$ satisfies all of the conditions of Definition \ref{def:finite-energy-entropy-sol}, proving Theorem \ref{thm:Navier-Stokes-limit}.\qed

\appendix
\section{}
\subsection{Recovering the physical entropy inequality}\label{sec:appendix}
In this appendix, we use a higher order energy estimate to extend the class of admissible entropies to include the physical entropy pair, $(\eta^*$, $q^*)$ under an additional integrability assumption on the initial data. 

We define an entropy $$\eta^\dagger(\rho,m)=\frac{1}{12}\frac{m^4}{\rho^3} +\frac{e(\rho)}{\rho}m^2+f(\rho),$$ where $f(\rho)$ solves $$f''(\rho)=\frac{2p'(\rho)e(\rho)}{\rho},\quad f(0)=f'(0)=0,$$
with associated entropy flux $q^\dagger(\rho,m)$, and define, as usual,
$$\overline{\eta^\dagger}(\rho,m)=\eta^\dagger(\rho,m)-\eta^\dagger(\bar\rho,\bar m)-\nabla\eta^\dagger(\bar\rho,\bar m)\cdot(\rho-\bar\rho,m-\bar m).$$
\begin{prop}
Let $(\rho^\eps,u^\eps)$ be solutions of \eqref{eq:NS}--\eqref{eq:Cauchydata} with initial data $(\rho^\eps_0,u^\eps_0)$ such that
$$
\mathcal{E}_0:=\sup_\eps \int_\R\overline{\eta^\dagger}(\rho_0^\eps,m_0^\eps)\,dx<\infty.
$$
Then there exists $M>0$, independent of $\eps$, such that
for any $T>0$,
\beqs
 \sup_{[0,T]}\int_\R\overline{\eta^\dagger}(\rho^\eps,m^\eps)\,dx+\eps\int_0^T\int_\R \big(|u^\eps u^\eps_x|^2+e(\rho)|u^\eps_x|^2\big)\,dx\,dt \leq M\mathcal{E}_0+M.
\eeqs
\end{prop}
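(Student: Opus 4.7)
The plan is to reproduce the structure of the proof of Lemma \ref{lemma:mainenergyestimate}, but with the higher-order entropy pair $(\eta^\dagger, q^\dagger)$ replacing the mechanical energy pair. The smooth viscous solutions guaranteed by Theorem \ref{thm:Hoff} satisfy
\beqs
\eta^\dagger(\rho,m)_t + q^\dagger(\rho,m)_x = \eps \,\eta^\dagger_m(\rho,m)\, u_{xx},
\eeqs
so differentiating $\int_\R \overline{\eta^\dagger}(\rho,m)\,dx$ in time and using the fact that $(\bar\rho,\bar m)$ is $t$-independent yields
\beqs
\frac{d}{dt}\int_\R\overline{\eta^\dagger}(\rho,m)\,dx = q^\dagger(\rho_-,m_-) - q^\dagger(\rho_+,m_+) + \eps \int_\R \eta^\dagger_m u_{xx}\,dx - \int_\R \nabla\eta^\dagger(\bar\rho,\bar m)\cdot(\rho_t,m_t)\,dx.
\eeqs
All that remains is to extract the stated dissipation from the viscous term and to show that every remaining term is uniformly controlled using Lemmas \ref{lemma:mainenergyestimate}--\ref{lemma:densityintegrability}.

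The main step is the viscous term. A direct computation from $\eta^\dagger(\rho,\rho u) = \tfrac{1}{12}\rho u^4 + e(\rho)\rho u^2 + f(\rho)$ gives $\eta^\dagger_m(\rho,\rho u) = \tfrac{1}{3}u^3 + 2e(\rho) u$, so integration by parts produces
\beqs
\eps\int_\R\eta^\dagger_m u_{xx}\,dx = -\eps\int_\R |u u_x|^2\,dx - 2\eps\int_\R e(\rho)|u_x|^2\,dx - 2\eps\int_\R e'(\rho)\rho_x u u_x\,dx.
\eeqs
The main obstacle is the cross term. I would control it by Young's inequality, splitting it as
\beqs
\Big|2\eps\int_\R e'(\rho)\rho_x u u_x\,dx\Big| \leq \frac{\eps}{2}\int_\R |u u_x|^2\,dx + 2\eps\int_\R (e'(\rho))^2 \rho_x^2\,dx,
\eeqs
which absorbs half of the $|u u_x|^2$ dissipation on the left. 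The residual error $2\eps (e'(\rho))^2 \rho_x^2$ is controlled by Lemma \ref{lemma:densityderivativeestimate} thanks to the pointwise bound $(e'(\rho))^2 = p(\rho)^2/\rho^4 \leq C\, p'(\rho)/\rho^2$, which one checks separately in the two regimes: near the vacuum, assumption \eqref{ass:pressure1} gives $p^2/(\rho^2 p')\sim \rho^{\ga-1}/\ga$, bounded for $\ga>1$; in the isothermal regime $\rho\ge 1$ the ratio equals unity exactly. Continuity of $p$ on the compact intermediate range then gives a uniform constant.

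For the reference-function term, writing $(\rho_t,m_t)$ via \eqref{eq:NS} and integrating by parts moves the derivative onto $\nabla\eta^\dagger(\bar\rho,\bar m)$, which is supported and bounded on $[-L_0,L_0]$. As in the proof of Lemma \ref{lemma:mainenergyestimate}, the resulting integrals are bounded by $M(1+E[\rho,u])$ using Lemma \ref{lemma:mainenergyestimate} and the fact that $(\rho+p(\rho))\leq M\,e^*(\rho,\bar\rho)$, while the correction $\eps\int (\eta^\dagger_m(\bar\rho,\bar m))_x u_x\,dx$ is controlled by $\tfrac{\eps}{2}\int u_x^2 + C\eps$ (both uniformly bounded after time integration by Lemma \ref{lemma:mainenergyestimate}). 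Boundary terms at $\pm\infty$ reduce to constants depending only on the end-states. Assembling these estimates yields
\beqs
\frac{d}{dt}\int_\R\overline{\eta^\dagger}\,dx + \frac{\eps}{2}\int_\R |u u_x|^2\,dx + 2\eps\int_\R e(\rho)|u_x|^2\,dx \leq M + 2\eps\int_\R(e'(\rho))^2\rho_x^2\,dx,
\eeqs
and integrating in $t\in[0,T]$ and invoking Lemmas \ref{lemma:mainenergyestimate}--\ref{lemma:densityderivativeestimate} for the right-hand side delivers the claimed bound $M\mathcal{E}_0+M$. No Gronwall argument is required since the right-hand side does not involve $\overline{\eta^\dagger}$ itself.
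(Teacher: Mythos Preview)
Your proposal is correct and follows essentially the same approach as the paper's proof: compute $\eta^\dagger_m=\tfrac{1}{3}u^3+2e(\rho)u$, integrate the viscous term by parts, split the cross term $2\eps\int e'(\rho)\rho_x u u_x\,dx$ with Young's inequality to absorb half of $\eps\int|uu_x|^2$, and control the residual $\eps\int(e'(\rho))^2\rho_x^2$ via the pointwise bound $p(\rho)^2/\rho^4\leq C\,p'(\rho)/\rho^2$ together with Lemma~\ref{lemma:densityderivativeestimate}. The treatment of the reference-function term and the absence of a Gronwall step likewise mirror the paper.
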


\begin{proof}
The proof is largely analogous to that of Lemma \ref{lemma:mainenergyestimate}. Dropping the explicit dependence of the functions on $\eps$ for convenience, we observe that $\eta^\dagger_m(\rho,m)=\frac{1}{3}u^3+2e(\rho)u$ to see
\beqas
\frac{d}{dt}\mathcal{E}[\rho,u](t)=&\,q^\dagger(\rho_-,m_-)-q^\dagger(\rho_+,m_+)-\int_\R\nabla\eta^\dagger(\bar\rho,\bar m)\cdot(\rho_t,m_t)\,dx\\
&+\eps\int_\R \big(\frac{1}{3}u^3 u_{xx}+2e(\rho)uu_{xx}\big)\,dx.
\eeqas
Using the main energy estimate, Lemma \ref{lemma:mainenergyestimate}, the penultimate term is bounded by
\beqas
\Big|\int_\R\nabla\eta^\dagger(\bar\rho,\bar m)\cdot(\rho_t,m_t)\,dx\Big|\leq&\,\frac{\eps}{2}\int_\R|u_x|^2\,dx+ME[\rho,u](t)\leq M.
\eeqas
We  integrate by parts to see
\beqas
\eps\int_\R \big(\frac{1}{3}u^3 u_{xx}+2e(\rho)uu_{xx}\big)\,dx=&-\eps\int_\R \big(u^2u_x^2+ 2e(\rho)u_x^2+2e'(\rho)u\rho_x u_x\big)\,dx.
\eeqas
Assumptions \eqref{ass:pressure1}--\eqref{ass:pressure2} on the pressure $p(\rho)$ imply that $\rho^{-4} p(\rho)^2$ is bounded by a constant multiple of $\rho^{-2} p'(\rho)$. From the Cauchy-Young inequality and the identity $p(\rho)=\rho^2 e'(\rho)$, we deduce
\beqas
\eps\int_\R 2e'(\rho)u\rho_xu_x\,dx\leq  \frac{\eps}{2}\int_\R u^2u_x^2\,dx+\eps M\int_\R \frac{p'(\rho)}{\rho^2}|\rho_x|^2\,dx.\\
\eeqas
In particular, we obtain that
\beqas
\frac{d}{dt}\mathcal{E}[\rho,u](t)+\frac{\eps}{2}\int_\R \big(u^2u_x^2+ 2e(\rho)u_x^2\big)\,dx\leq M+\frac{\eps}{2}\int_\R|u_x|^2\,dx+\eps M\int_\R \frac{p'(\rho)}{\rho^2}|\rho_x|^2\,dx.
\eeqas
Integrating this inequality in time also, we apply Lemma \ref{lemma:densityderivativeestimate} to bound the final term and conclude that
\beqas
\mathcal{E}[\rho,u](T)+\frac{\eps}{2}\int_0^T\int_\R \big(u^2u_x^2+ 2e(\rho)u_x^2\big)\,dx
\leq \,M\big(\mathcal{E}[\rho_0,u_0]+1\big)
\eeqas
as required.
\end{proof}
The higher integrability of $\rho|u|^4$ allows us to argue as in Proposition \ref{prop:YMtestfunctions} to extend the admissible range of test functions for the Young measure to include those of sub-cubic growth, in particular allowing us to test with the physical entropy pair. Thus, if the initial data satisfies
$$\sup_\eps\mathcal{E}[\rho_0^\eps,u_0^\eps]\leq E_2<\infty$$
in addition to the other conditions of the main theorem, Theorem \ref{thm:Navier-Stokes-limit}, we may use the test function $\psi(s)=\half s^2$ generating the physical entropy pair $(\eta^*,q^*)$ in \eqref{eq:6.1} and pass $\eps\to0$ to obtain that the vanishing viscosity limit is an entropy solution of the Euler equations satisfying the entropy inequality also for the physical entropy.


\begin{thebibliography}{99}

\bibitem{AlbertiMueller} \textsc{Alberti, G. and M\"uller, S.}, A new approach to variational problems with multiple scales, \it Comm. Pure Appl. Math. \rm \textbf{54} (2001), 761--825. 

\bibitem{Ball}  \textsc{Ball, J.\,M.}, A version of the fundamental theorem for Young measures, \it PDEs and continuum models of phase transitions (Nice, 1988), 207--215, \rm Lecture Notes in Phys. \textbf{344}, Springer, Berlin (1989).

\bibitem{Chen1} \textsc{Chen, G.-Q.}, Convergence of the Lax-Friedrichs scheme for isentropic gas dynamics (III), \it Acta Math. Sci. \rm \textbf{8} (1988), 243--276.

\bibitem{ChenLeFloch} \textsc{Chen, G.-Q. and LeFloch, P.\,G.}, Compressible Euler equations with general pressure law, \it Arch. Ration. Mech. Anal. \rm \textbf{153} (2000), 221--259.

\bibitem{ChenLeFloch2} \textsc{Chen, G.-Q. and LeFloch, P.\,G.}, Existence theory for the isentropic Euler equations, \it Arch. Ration. Mech. Anal. \rm \textbf{166} (2003), 81--98.

\bibitem{ChenPerep1} \textsc{Chen, G.-Q. and Perepelitsa, M.}, Vanishing viscosity limit of the Navier-Stokes equations to the Euler equations for compressible fluid flow,
\it Comm. Pure Appl. Math. \rm  \textbf{63} (2010), 1469--1504.

\bibitem{ChenPerep2} \textsc{Chen, G.-Q. and Perepelitsa, M.}, Vanishing viscosity solutions of the compressible Euler equations with spherical symmetry and large initial data,
 \it Comm. Math. Phys. \rm\textbf{338} (2015), 771--800.

\bibitem{ChenSchrecker} \textsc{Chen, G.-Q. and Schrecker, M.\,R.\,I.}, Vanishing viscosity approach to the compressible Euler equations for transonic nozzle and spherically symmetric flows, \it Arch. Ration. Mech. Anal. \rm \textbf{229} (2018), 1239--1279.

\bibitem{CDM} \textsc{Conti, S., Dolzmann, G. and M\"uller, S.}, The div-curl lemma for sequences whose divergence and curl are compact in $W^{-1,1}$, \it C. R. Math. Acad. Sci. Paris \rm \textbf{349} (2011), 175--178.

\bibitem{DalMasoLeFlochMurat} \textsc{Dal Maso, G., LeFloch, P.\,G. and Murat, F.}, Definition and weak stability of nonconservative products, \it J. Math. Pures Appl. \rm \textbf{74} (1995), 483--548.

\bibitem{DingChenLuo} \textsc{Ding, X., Chen, G.-Q. and Luo, P.}, Convergence of the Lax-Friedrichs scheme for isentropic gas dynamics (I)-(II), \it Acta Math. Sci. \rm \textbf{5} (1985), 483--500, 501--540 (English); and \textbf{7} (1987), 467--480; \textbf{8} (1989), 61--94 (Chinese).

\bibitem{DiPerna2} \textsc{DiPerna, R.\,J.}, Convergence of the viscosity method for isentropic gas dynamics, \it Comm. Math. Phys. \rm \textbf{91} (1983), 1--30.

\bibitem{GLeFloch}
\textsc{Germain, P. and LeFloch, P.\,G.}, Finite energy method for compressible fluids: the Navier-Stokes-Korteweg model, \it Comm. Pure Appl. Math. \rm \textbf{69} (2016), 3--61.

\bibitem{GR} \textsc{Gradshteyn, I.\,S. and Ryzhik, I.\,M.}, \it Table of Integrals, Series, and Products \rm (6th edition), Academic Press Inc.: San Diego, CA, (2000).

\bibitem{GMWZ} \textsc{Gu\`es, C.\,M.\,I.\,O., M\'etivier, G., Williams, M. and Zumbrun, K.}, Navier-Stokes regularization of multidimensional Euler shocks, \it Ann. Sci. \'Ecole Norm. Sup. \rm  \textbf{39} (2006), 75--175.

\bibitem{Hoff} \textsc{Hoff, D.}, Global solutions of the equations of one-dimensional, compressible flow with large data and forces, and with differing end states, \emph{Z. Angew. Math. Phys.} \textbf{49} (1998), 774--785.

\bibitem{HoffLiu}  \textsc{Hoff, D. and Liu, T.-P.}, The inviscid limit for the Navier-Stokes equations of compressible, isentropic flow with shock data, \it Indiana Univ.~Math.~J. \rm \textbf{38} (1989), 861--915. 

\bibitem{HWWY} \textsc{Huang, F., Wang, Y., Wang, Y. and Yang, T.}, Vanishing viscosity of isentropic Navier-Stokes equations for interacting shocks, \it Sci. China Math. \rm \textbf{58} (2015), 653--672.

\bibitem{HZ} \textsc{Huang, F. and Wang, Z.}, Convergence of viscosity solutions for isothermal gas dynamics, \emph{SIAM J. Math. Anal.} \textbf{34} (2002), 595--610.

\bibitem{Kanel} \textsc{Kanelʹ, Ja. I.}, A model system of equations for the one-dimensional motion of a gas, \it Differencialʹnye Uravnenija \rm \textbf{4} (1968), 721--734 (Russian). 

\bibitem{LeFlochShelukhin} \textsc{LeFloch, P.\,G. and Shelukhin, V.}, Symmetries and global solvability of the isothermal gas dynamics equations, \it Arch. Ration. Mech. Anal. \rm \textbf{175} (2005), 389--430. 

\bibitem{LeFlochWestdickenberg} \textsc{LeFloch, P.\,G. and Westdickenberg, M.}, Finite energy solutions to the isentropic Euler equations with
geometric effects, \it J. Math. Pures Appl. \rm \textbf{88} (2007), 389--429.

\bibitem{LionsPerthameSouganidis} \textsc{Lions, P.-L., Perthame, B. and Souganidis, P.\,E.}, Existence and stability of entropy solutions for the hyperbolic systems of isentropic gas dynamics in Eulerian and Lagrangian coordinates, \it Comm. Pure Appl. Math. \rm \textbf{49} (1996), 599--638.

\bibitem{LionsPerthameTadmor} \textsc{Lions, P.-L., Perthame, B. and Tadmor, E.}, Kinetic formulation for the isentropic gas dynamics and p-system, \it Comm. Math. Phys. \rm \textbf{163} (1994), 415--431.

\bibitem{Roubicek} \textsc{Roubi\v{c}ek, T.}, \it Relaxation in Optimization Theory and Variational Calculus\rm, De Gruyter Series in Nonlinear Analysis and Applications, vol. 4, Walter de Gruyter \& Co., Berlin (1997). 
%
\bibitem{Rudin} \textsc{Rudin, W.}, \it Functional Analysis \rm (2nd edition), International Series in Pure and Applied Mathematics, McGraw-Hill, Inc., New York (1991).

\bibitem{Tartar} \textsc{Tartar, L.}, Compensated compactness and applications to partial differential equations, in \it Research Notes in Mathematics, Nonlinear Analysis and Mechanics, Herriot-Watt Symposium, Vol. IV\rm,  136--212, Res. Notes in Math. \textbf{39}, Pitman, Boston, Mass.-London (1979). 

\bibitem{Thorne} \textsc{Thorne, K.\,S.}, Gravitational collapse and the death of a star, \it Science \rm \textbf{150} (1965), 1671--1679.

\bibitem{Xin} \textsc{Xin, Z.}, Zero dissipation limit to rarefaction waves for the one-dimensional Navier-Stokes equations of compressible isentropic gases, \it Comm. Pure Appl. Math. \rm \textbf{46} (1993), 621--665.

\bibitem{ZPT}  \textsc{Zhang, Y., Pan, R. and Tan, Z.}, Zero dissipation limit to a Riemann solution consisting of two shock waves for the 1D compressible isentropic Navier-Stokes equations, \it Sci. China Math. \rm \textbf{56} (2013), 2205--2232.
\end{thebibliography}
\end{document}